\newcommand{\nui}[1]{N(#1)}
\newtheorem{fed}{Definition}[section]
\newtheorem*{fed*}{Definition}
\newtheorem*{feds*}{Definitions}
\newtheorem{teo}[fed]{Theorem}
\newtheorem*{teo*}{Theorem}
\newtheorem{lem}[fed]{Lemma}
\newtheorem{cor}[fed]{Corollary}
\newtheorem{pro}[fed]{Proposition}
\theoremstyle{definition}
\newtheorem{rem}[fed]{Remark}
\newtheorem{conj}[fed]{Conjecture}
\newtheorem*{rems*}{Remarks}
\newtheorem{nota}[fed]{Notation}
\def\coma{\, , \, }
\def\py{\peso{and}}
\newcommand{\peso}[1]{ \quad \text{ #1 } \quad }
\def\n0{n_{ \text{\rm \tiny o}}}
\newcommand{\IN}[1]{\mathbb {I} _{#1}}
\def\suml{\sum\limits}
\def\bce{\begin{center}}
\def\ece{\end{center}}
\def\cO{{\mathcal O}}
\def\cD{\mathcal D}
\def\py{\peso{and}}
\def\rk{\text{\rm rk}}
\def\noi{\noindent}
\def\cF{\mathcal F}
\def\cG{\mathcal G}
\def\EOE{\hfill $\triangle$}
\def\uno{\mathds{1}}
\def\bm{\left[\begin{array}}
\def\em{\end{array}\right]}
\def\ben{\begin{enumerate}}
\def\een{\end{enumerate}}
\def\bit{\begin{itemize}}
\def\eit{\end{itemize}}
\def\barr{\begin{array}}
\def\earr{\end{array}}
\def\igdef{\ \stackrel{\mbox{\tiny{def}}}{=}\ }
\def\eps{\varepsilon}
\def\la{\lambda}
\def\al{\alpha}
\def\N{\mathbb{N}}
\def\R{\mathbb{R}}
\def\C{\mathbb{C}}
\def\I{\mathbb{I}}
\def\T{\mathbb{T}}
\def\cH{\mathcal{H}}
\def\cT{{\cal T}}
\def\cM{{\cal M}}
\def\cB{{\cal B}}
\def\cN{{\cal N}}
\def\cV{{\cal V}}
\def\cU{{\cal U}}
\def\ca{\mathbf{a}}
\def\inc{\subseteq}
\def\rai{^{1/2}}
\def\da{^\downarrow}
\DeclareMathOperator{\Preal}{Re} 
 \DeclareMathOperator{\tr}{tr}
\DeclareMathOperator{\gen}{span}
\DeclareMathOperator{\leqp}{\leqslant}
\newcommand{\mat}{\mathcal{M}_d(\mathbb{C})}
\newcommand{\matsad}{\mathcal{H}(d)}
\newcommand{\matud}{\mathcal{U}(d)}
\newcommand{\matpos}{\mat^+}
\def\beq{\begin{equation}}
\def\eeq{\end{equation}}
\def\pausa{\medskip\noi}
\def\Ax2{\,( S_{E(\cF)^\#_\cV})\hat{}_x }
\newcommand{\tcal}{\T_{d}(\ca)}
\newcommand{\asubi}{\ca=(a_i)_{i\in\I_k}\,\in(\R^k_{>0})\da}
\newcommand{\phisn}{\Phi_{(N,\, S,\, \mu)}}
\newcommand{\then}{\Rightarrow}
\newcommand{\paren}[1]{\left(#1\right)}
\newcommand{\llav}[1]{\left\{#1\right\}}
\newcommand{\abs}[1]{\left|#1\right|}
\newcommand{\norm}[1]{\left\|#1\right\|}
\definecolor{rojo}{rgb}{1,0,0}
\definecolor{azul}{rgb}{0,0,1}
\begin{document}

\title{Local Lidskii's theorems for unitarily invariant norms}
\author{ Pedro G. Massey $^{*}$, Noelia B. Rios $^{*}$ and Demetrio Stojanoff 
\footnote{Partially supported by CONICET 
(PIP 0150/14), FONCyT (PICT 1506/15) and  FCE-UNLP (11X681), Argentina. } \ 
 \footnote{ e-mail addresses: massey@mate.unlp.edu.ar, nbrios@mate.unlp.edu.ar, demetrio@mate.unlp.edu.ar}
\\ {\small Depto. de Matem\'atica, FCE-UNLP
and IAM-CONICET, Argentina  }}

\date{}
\maketitle
\begin{abstract}
Lidskii's additive inequalities (both for eigenvalues and singular values) can be interpreted
as an explicit description of global minimizers of functions that are built on unitarily invariant norms, with
domains consisting of certain orbits of matrices (under the action of the unitary group). In this paper, we show
that Lidskii's inequalities actually describe all global minimizers of such functions and that local minimizers 
are also global minimizers. We use these results to obtain partial results related 
to local minimizers of generalized frame operator distances in the context of finite frame theory.
\end{abstract}

\noindent  AMS subject classification: 42C15, 15A60.

\noindent Keywords: Lidskii's inequality, unitarily invariant norms, majorization,
frame operator distance.

\tableofcontents

\section{Introduction}

Lidskii's additive inequalities \cite{Lidskii}  are ubiquitous in matrix analysis. They are part of 
the fundamental toolkit to deal with some of the most natural problems in this theory, such as matrix approximation problems 
(matrix nearness problems) and singular values/eigenvalues inequalities (see \cite{Bhat,HJ1,HJ2} and the references therein). 
Lidskii's inequalities are expressed in terms of an important pre-order between 
real vectors called majorization. Since majorization is intimately related to tracial inequalities 
involving convex functions, Lidskii's inequalities can be used to describe the structure of 
matrices that are optimal with respect to families of entropic-like functionals (see \cite{dnp,mr2010,mrs2,mrs3}). 
Lidskii's inequalities also provide some simple relations between the spectra of the sum of selfadjoint matrices and 
its summands, related to the solution of Horn's conjecture \cite{A.Horn} on the spectra of the sum of selfadjoint matrices, based on the work
of A. Klyachko \cite{Klyach} and A. Knutson and T. Tao \cite{KT} (see \cite{Fulton} for a historical account and a comprehensive description of 
the solution of Horn's conjecture).

\pausa
In the present paper, we consider local versions of Lidskii's inequalities with respect to unitarily invariant norms (u.i.n.).
To be more precise, consider a strictly convex u.i.n., denoted by N, on $\mat$ - the algebra of complex $d\times d$ matrices -
and fix a selfadjoint matrix $S\in \mat$. Fix $\mu \in \R^d$ and let 
$\mathcal O_\mu=\{	U^*\, D_\mu\, U:\ U\in\matud\}$, where $\matud$ denotes the group of unitary matrices and $D_\mu\in\mat$ denotes the diagonal
matrix with main diagonal $\mu$. Then, we consider $$\Phi:\mathcal O_\mu\rightarrow \R_{\geq 0} \peso{given by} \Phi(G)=N(S-G)\,.$$
Using Lidskii's additive inequality for eigenvalues of selfadjoint matrices, we can construct
$G^{\rm op}\in \mathcal O_\mu$ such that $\Phi(G^{\rm op})\leq \Phi(G)$, for every $G\in\mathcal O_\mu$
(see Section \ref{sec prelis nuis} for details). That is, Lidskii's inequality allows us to construct (explicitly) global minimizers
of $\Phi$. It is natural to wonder about the structure of all possible minimizers of $\Phi$ in $\mathcal O_\mu$. Moreover, since 
$\mathcal O_\mu$ has a natural metric (induced by the spectral norm) then we can ask about the structure of local minimizers of 
$\Phi$ in $\mathcal O_\mu$. These local minimizers arise naturally when considering optimization of 
$\Phi_2(G)=\|S-G\|_2$, i.e. when $N$ is the Frobenius norm. In this case, $\Phi_2^2$ is a smooth function defined on
a smooth manifold and thus we can apply (adapted) gradient descent algorithms to find minimizers of $\Phi_2$; notice that local minimizers of $\Phi_2$
are stability points of these algorithms and therefore their structure becomes part of the convergence analysis of these methods.
Thus, our first main problem is to study the structure of global and local minimizers of $\Phi$, for a general strictly convex u.i.n. $N$.
We carry out a similar analysis for Lidskii's inequality for singular values. In both cases we show that local minimizers are indeed global minimizers and
we compute their geometrical properties.

\pausa
Finite frame theory is a well established and rapidly growing area of research (see \cite{FinFram}). 
It is well known by now that several fundamental results of finite frame theory are counter-parts of 
well known results in matrix analysis. For example, the so-called frame design problem with prescribed frame operator and norms 
- that has played a central role in finite frame theory - is equivalent to some formulations of 
the Schur-Horn theorem (see the survey \cite{BowJas} and the reference therein). 
So it is no surprise that our results have implications in this area of research. Indeed, 
from the local version of Lidskii's theorem we 
derive some partial results related to the structure of local minimizers of the generalized frame operator 
distance (G-FOD) (see \cite{AC,dnp,strawn}).

\pausa
The paper is organized as follows. In Section \ref{sec prelis nuis} we recall several results from matrix analysis
that we use throughout the paper. In section \ref{lidskii nuis} we state and prove our main results
related to the local versions of Lidskii's theorem. Indeed, in Section \ref{sec 3.1} 
we obtain complete results showing that local minimizers of functions that are built
on strictly convex u.i.n's (as above) are global minimizers. 
In Section \ref{sec 3.2} we consider the
corresponding problem for Lidskii's singular value inequalities. In order to obtain these results, we consider
some (differential) geometrical properties of some auxiliary smooth maps.
In Section \ref{sec 4} we apply the results from the previous sections to the study of local
minimizers of the G-FOD induced by strictly convex u.i.n's.
We obtain some partial results regarding the general structure of these local minimizers and show that under
some further hypothesis, they are global minimizers of the G-FOD.

\section{Preliminaries}\label{sec prelis nuis}

In this section we introduce the notations, terminology and results from matrix analysis 
that we will use throughout the 
paper (see the texts \cite{Bhat,HJ1,HJ2}).

\pausa
{\bf Notation and terminology}. We let $\mathcal M_{k,d}(\C)$ be the space of complex $k\times d$ matrices and write $\mathcal M_{d,d}(\C)=\mat$ for the algebra of complex $d\times d$  matrices. We denote by $\matsad\subset \mat$ the real subspace of selfadjoint matrices and by $\matpos\subset \matsad$ the cone of positive semidefinite matrices. We let $\matud\subset \mat$ denote the group of unitary matrices.
For $d\in\N$, let $\I_d=\{1,\ldots,d\}$. 
Given a vector $x\in\C^d$ we denote by $D_x$ the diagonal matrix in $\mat$ whose main diagonal is $x$.
Given $x=(x_i)_{i\in\I_d}\in\R^d$ we denote by $x\da=(x_i\da)_{i\in\I_d}$ the  
vector obtained by rearranging the entries of $x$ in non-increasing order. We denote by 
$(\R^d)\da=\{x\da:\ x\in\R^d\}$ and $(\R_{\geq 0}^d)\da=\{x\da:\ x\in\R_{\geq 0}^d\}$. Given a matrix $A\in\matsad$ we denote by 
$\la(A)=\la(A)\da=(\la_i(A))_{i\in\I_d}\in (\R^d)\da$ the eigenvalues of $A$ counting multiplicities and arranged in 
non-increasing order.   
For $B\in\mat$ we let $s(B)=\la(|B|)$ denote the singular values of $B$, i.e. the eigenvalues of $|B|=(B^*B)^{1/2}\in\matpos$; we also let $\sigma(B)\subset \C$ denote the spectrum of $B$.
If $x,\,y\in\C^d$ we denote by $x\otimes y\in\mat$ the rank-one matrix given by $(x\otimes y) \, z= \langle z\coma y\rangle \ x$, for $z\in\C^d$.

\pausa Next we recall the notion of majorization between vectors, that will play a central role throughout our work.
\begin{fed}\rm 
Let $x\in\R^k$ and $y\in\R^d$. We say that $x$ is
{\it submajorized} by $y$, and write $x\prec_w y$,  if
$$
\suml_{i=1}^j x^\downarrow _i\leq \suml_{i=1}^j y^\downarrow _i \peso{for every} 1\leq j\leq \min\{k\coma d\}\,.
$$
If $x\prec_w y$ and $\tr x = \sum_{i=1}^kx_i=\sum_{i=1}^d y_i = \tr y$,  then $x$ is
{\it majorized} by $y$, and write $x\prec y$.
\end{fed}

\begin{rem}\label{desimayo}
\pausa Given $x,\,y\in\R^d$ we write
$x \leqp y$ if $x_i \le y_i$ for every $i\in \mathbb I_d \,$.  It is a standard  exercise 
to show that: 
\begin{enumerate}
\item $x\leqp y \implies x^\downarrow\leqp y^\downarrow  \implies x\prec_w y $. 
\item $x\prec y\implies |x|\prec_w|y|$, where $|x|=(|x_i|)_{i\in\I_d}\in\R_{\geq 0}^d$.
\item $x\prec y,\, |x|\da=|y|\da \implies x\da=y\da$. 
\item If $\tr(x)=\sum_{i\in\I_d} x_i=t$ then $\frac{t}{d}\,\uno_d\prec x$.
\EOE
\end{enumerate}
\end{rem}

\pausa 
Although majorization is not a total order in $\R^d$, there are several fundamental inequalities in 
matrix theory that can be described in terms of this relation. As an example of this phenomenon we can consider 
Lidskii's (additive) inequality for eigenvalues of sums of hermitians (see \cite{Bhat,HJ1,HJ2}). In the following result we also include the characterization of the case of equality obtained in \cite{mrs2}.

\begin{teo}[Lidskii's inequality]\label{mrs284}\rm
Let $A,\,B\in \matsad$ with eigenvalues $\la(A)$, $\la(B)\in (\R^d)^\downarrow$ respectively. Then 
\ben
\item $\la(A)-\la(B) \prec \la(A-B)$.
\item $\paren{\la(A)-\la(B)}^{\downarrow}=\la(A-B)$ if and only if there exists 
$\{v_i\}_{i\in\I_d}$ an ONB of $\C^d$ such that 
\beq
A=\sum_{i\in\I_d}\la_i (A) \ v_i\otimes v_i \py B=\sum_{i\in\I_d}\la_i (B)\ v_i\otimes v_i\  .
\eeq
Notice that in this case, $A$ and $B$ commute.
\qed\een 
\end{teo}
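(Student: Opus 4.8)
The plan is to treat the two parts separately, since part (1) is the classical Lidskii inequality and part (2) is the equality case. For part (1), the cleanest route is via the Ky Fan characterization of majorization: since both $\la(A)-\la(B)$ and $\la(A-B)$ have equal trace (both sum to $\tr(A)-\tr(B)$, by linearity of the trace and the fact that eigenvalues sum to the trace), it suffices to prove the submajorization $\la(A)-\la(B)\prec_w\la(A-B)$, i.e. the partial-sum inequalities $\sum_{i=1}^j\bigl(\la_i(A)-\la_i(B)\bigr)^\downarrow\le\sum_{i=1}^j\la_i(A-B)$ for each $j$. Writing $C=A-B$, so $A=C+B$, I would fix $j$ and a set $J\subset\I_d$ with $|J|=j$ realizing the $j$ largest values of $\la(A)-\la(B)$, and invoke the variational (Ky Fan / Wielandt) principle: for any selfadjoint $X$ and any $j$-dimensional subspace considerations, $\sum_{i\in J}\la_i(X)$ can be bounded by extremizing traces of compressions $P X P$ over rank-$j$ projections $P$. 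The standard argument combines $\sum_{\text{top }j}\la(C+B)\le\sum_{\text{top }j}\la(C)+\sum_{\text{top }j}\la(B)$ with a more refined index-wise bookkeeping; alternatively, one cites the Lidskii--Wielandt theorem directly from \cite{Bhat,HJ1,HJ2}. I expect to simply quote this, since the excerpt explicitly says ``see \cite{Bhat,HJ1,HJ2}''.

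For part (2), the ``if'' direction is the routine one: if $A=\sum_i\la_i(A)\,v_i\otimes v_i$ and $B=\sum_i\la_i(B)\,v_i\otimes v_i$ in a common ONB, then $A-B=\sum_i\bigl(\la_i(A)-\la_i(B)\bigr)v_i\otimes v_i$, so the multiset of eigenvalues of $A-B$ is exactly $\{\la_i(A)-\la_i(B)\}_{i\in\I_d}$; rearranging in non-increasing order gives $\la(A-B)=\bigl(\la(A)-\la(B)\bigr)^\downarrow$, and clearly $A$ and $B$ commute since they are simultaneously diagonalized. The ``only if'' direction is the heart of the matter. The approach I would take is to reduce to the rigidity built into the majorization chain. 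We always have $\la(A)-\la(B)\prec\la(A-B)$ from part (1); the hypothesis says this majorization is an equality after rearrangement, i.e. the two vectors have the same non-increasing rearrangement.

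The key step is to extract a common eigenbasis from the saturation of all the Ky Fan inequalities. Concretely: equality $\bigl(\la(A)-\la(B)\bigr)^\downarrow=\la(A-B)$ forces equality in every partial-sum inequality $\sum_{i=1}^j\la_i(A)-\sum_{i=1}^j\la_i(B)=\sum_{i=1}^j\la_i(A-B)$ and also $\sum_{i=d-j+1}^d$ versions (from the trace equality plus the top-$j$ equalities). Equality in the Ky Fan inequality $\sum_{i=1}^j\la_i(A-B)=\sum_{i=1}^j\la_i(A)-\sum_{i=1}^j\la_i(B)$, combined with the extremal characterization, forces the existence of a $j$-dimensional subspace $\cV_j$ that is simultaneously invariant (or spanned by top eigenvectors) for $A$, for $B$, and for $A-B$ — i.e. $\cV_j$ is spanned by the top-$j$ eigenvectors of $A$, the top-$j$ eigenvectors of $B$, and the top-$j$ eigenvectors of $A-B$ all at once. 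Running this over $j=1,2,\dots,d$ produces a flag $\cV_1\subset\cV_2\subset\cdots\subset\cV_d=\C^d$ of common invariant subspaces, and picking $v_j$ a unit vector in $\cV_j\ominus\cV_{j-1}$ yields the desired ONB $\{v_i\}$ simultaneously diagonalizing $A$, $B$, and $A-B$, with $v_i$ the eigenvector for the $i$-th largest eigenvalue in each case. I anticipate the main obstacle is the degenerate-eigenvalue bookkeeping: when $\la(A)$, $\la(B)$, or $\la(A-B)$ have repeated values, the ``top-$j$ subspace'' is not unique, and one must argue that the choices can be made consistently across all $j$ so that a single flag works; this is exactly the delicate point handled in \cite{mrs2}, and I would either reproduce that argument carefully or cite it, noting that the equality case reduces to matching eigenspaces level by level and choosing refinements compatibly.
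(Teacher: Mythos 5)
First, note that the paper offers no proof of this statement at all: Theorem \ref{mrs284} is quoted as background, with part 1 taken from \cite{Bhat,HJ1,HJ2} and the characterization of equality in part 2 taken from \cite{mrs2} (hence the \qed attached to the statement itself). So your fallback position of simply citing the Lidskii--Wielandt theorem for part 1 and \cite{mrs2} for part 2 is exactly what the paper does, and the ``if'' direction of part 2 as you present it is indeed routine.

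That said, the sketch you give of the ``only if'' direction contains a concrete error that would derail the argument if carried out literally. You claim that the hypothesis $(\la(A)-\la(B))\da=\la(A-B)$ forces equality in every initial-segment inequality $\sum_{i=1}^j\la_i(A)-\sum_{i=1}^j\la_i(B)=\sum_{i=1}^j\la_i(A-B)$. This is false: take $A=\mathrm{diag}(1,0)$ and $B=\mathrm{diag}(1,-1)$, so that $\la(A)-\la(B)=(0,1)$ and $(\la(A)-\la(B))\da=(1,0)=\la(A-B)$, yet $\la_1(A)-\la_1(B)=0\neq 1=\la_1(A-B)$. What the hypothesis actually saturates, for each $j$, is the Lidskii--Wielandt inequality $\sum_{i\in J}(\la_i(A)-\la_i(B))\le\sum_{i=1}^j\la_i(A-B)$ for an index set $J=J_j$ realizing the $j$ largest entries of the \emph{unsorted} vector $\la(A)-\la(B)$; these sets need not be initial segments, and when there are ties they need not even be chosen nested in $j$. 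Consequently the flag you propose cannot be built from the ``top-$j$ eigenvectors of $A$, of $B$, and of $A-B$'' simultaneously (in the example above, for $j=1$ these are $e_1$, $e_1$ and $e_2$ respectively, which span no common line); the common reducing subspaces extracted from the equality case of the variational principle are attached to the sets $J_j$, and making those choices coherent across $j$ in the presence of degenerate eigenvalues is precisely the nontrivial content of the argument in \cite{mrs2}. So either cite \cite{mrs2} outright, as the paper does, or redo the saturation analysis with the correct index sets.
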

\pausa
Recall that a norm $\nui{\cdot}$ in $\mat$ is unitarily invariant if 
$$ \nui{UAV}=\nui{A} \peso{for every} A\in\mat \py U,\,V\in\matud\,.$$
Examples of unitarily invariant norms (u.i.n.) are the spectral norm $\|\cdot\|$ and the $p$-norms $\|\cdot\|_p$, for $p\geq 1$.
It is well known that majorization relations between singular values of matrices are intimately related 
with inequalities with respect to u.i.n's. 
The following result summarizes these relations (see for example \cite{Bhat}):

\begin{teo}\label{teo intro prelims mayo}\rm
Let $A,\,B\in\mat$ be such that $s(A)\prec_w s(B)$. Then:
\ben 
\item For every u.i.n. $N$ in $\mat$
we have that $N(A)\leq N(B)$.
\item If we assume that there exists a strictly convex
u.i.n. $N$ in $\mat$ such that $N(A)=N(B)$ then we have that $s(A)=s(B)$.
\een
\qed
\end{teo}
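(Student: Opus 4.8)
The plan is to deduce the statement from the corresponding majorization facts about eigenvalues/singular values, namely from the Fan dominance principle and its strict-convexity refinement. For item (1): recall that for any $A\in\mat$ and any u.i.n. $N$, one has $N(A)=g(s(A))$ where $g$ is the (symmetric gauge) function associated to $N$; the key classical fact is that symmetric gauge functions are \emph{monotone with respect to submajorization}, i.e. $x\prec_w y$ in $\R_{\geq 0}^d$ implies $g(x)\le g(y)$. This monotonicity is itself a consequence of Ky Fan's maximum principle: the Ky Fan $k$-norms $\Phi_k(A)=\sum_{i=1}^k s_i(A)$ are unitarily invariant norms, and any u.i.n. can be written as $N(A)=\sup\{\,|\tr(AX)| : \Phi_k(X)\le 1 \text{ for the dual gauge}\,\}$, so that a uniform control of all partial sums of singular values forces $N(A)\le N(B)$. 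Thus item (1) follows immediately from $s(A)\prec_w s(B)$ together with the Fan dominance theorem.

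**For item (2)**, I would argue as follows. We are given a strictly convex u.i.n. $N$ with $N(A)=N(B)$ while $s(A)\prec_w s(B)$. The first step is a reduction to the case of equal traces: by appending zeros if necessary we view $s(A),s(B)\in(\R_{\geq0}^d)\da$, and we observe that $\tr\, s(A)\le \tr\, s(B)$ (the $j=d$ inequality of submajorization). If the two traces were \emph{strictly} different, I would produce a vector $c$ with $s(A)\leqp c$, $c\leqp s(B)$ componentwise after reordering — more carefully, one uses that submajorization plus a strict trace gap lets one strictly increase some coordinate of $s(A)$ while staying below $s(B)$ in submajorization — obtaining a matrix $C$ (diagonal, say) with $s(A)\prec_w s(C)\prec_w s(B)$ and $s(A)\neq s(C)$ in a way that forces $N(A)<N(C)\le N(B)$, contradicting $N(A)=N(B)$. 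So in fact $\tr\, s(A)=\tr\, s(B)$, and hence $s(A)\prec s(B)$ (genuine majorization of nonnegative vectors).

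**The core of item (2)** is then the statement that a strictly convex symmetric gauge function $g$ is \emph{strictly} monotone under majorization: if $x\prec y$ in $\R_{\geq0}^d$ and $g(x)=g(y)$ then $x\da=y\da$. This is the place where strict convexity is actually used. By the Hardy--Littlewood--Pólya theorem, $x\prec y$ means $x$ lies in the convex hull of the permutations $\{P_\sigma\, y : \sigma\in S_d\}$; writing $x=\sum_\sigma t_\sigma\, P_\sigma y$ with $t_\sigma\ge 0$, $\sum t_\sigma=1$, convexity of $g$ together with its permutation-invariance gives $g(x)\le \sum_\sigma t_\sigma\, g(P_\sigma y)=g(y)$, and the hypothesis $g(x)=g(y)$ forces equality in Jensen's inequality; strict convexity of $g$ then forces all the points $P_\sigma y$ with $t_\sigma>0$ to be equal, whence $x$ itself equals $P_\sigma y$ for some $\sigma$, i.e. $x\da=y\da$. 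Applying this with $x=s(A)$, $y=s(B)$ yields $s(A)=s(B)$, which is the assertion of item (2).

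**The main obstacle** I anticipate is the reduction step in item (2): passing from submajorization with a possibly strict trace gap to a genuine majorization while simultaneously keeping a strict u.i.n.\ inequality in hand, so that the trace gap can be ruled out by contradiction. Concretely one must check that if $s(A)\neq s(B)$ but $N(A)=N(B)$, the "interpolating" matrix $C$ can always be chosen so that $N(A)<N(C)$ — this requires knowing that $g$ is \emph{strictly} increasing in each coordinate on $\R_{\geq0}^d$, which again follows from strict convexity (a gauge function constant along a coordinate direction on a segment would violate strict convexity together with symmetry). Once that monotonicity lemma for $g$ is isolated, both halves of the theorem are short; essentially everything rests on transporting the problem from matrices to the symmetric gauge function via $N(\cdot)=g(s(\cdot))$ and then invoking Hardy--Littlewood--Pólya and the definition of strict convexity.
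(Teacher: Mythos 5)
The paper does not prove this theorem: it is stated as a known result and attributed to the literature (Bhatia's \emph{Matrix Analysis}), so there is no in-paper argument to compare against. Your proof is the standard one and is essentially correct. Item (1) via $N(A)=g(s(A))$ for a symmetric gauge function $g$ together with Fan's dominance principle is exactly the textbook route. For item (2), two points deserve to be made precise. First, ``strictly convex'' for a norm must be read as strict rotundity of the unit ball (a norm is never strictly convex as a function, being homogeneous); with that reading your Jensen/Birkhoff step is correct: $s(A)=\sum_\sigma t_\sigma P_\sigma\, s(B)$ with all $P_\sigma\, s(B)$ on the sphere of radius $g(s(B))$ and the combination on the same sphere forces the contributing $P_\sigma\, s(B)$ to coincide, whence $s(A)=s(B)$ after sorting. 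Second, your reduction from submajorization to majorization is the one place where a careless choice fails: increasing a single fixed coordinate of $s(A)$ can violate an intermediate partial-sum constraint (e.g.\ $s(A)=(3,3,0)\prec_w(4,2,2)=s(B)$, where the top-two sums agree, so the first coordinate cannot be raised). The clean fix is the standard lemma that for nonnegative vectors $x\prec_w y$ there exists $u$ with $x\leqp u$ (coordinatewise, sorted) and $u\prec y$; combined with the strict coordinatewise monotonicity of a strictly convex gauge function --- which you correctly derive from symmetry plus rotundity --- a trace gap would give $N(A)=g(s(A))<g(u)\le N(B)$, a contradiction. With those two standard lemmas quoted, your argument is complete.
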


\section{Local Lidskii's theorems for unitarily invariant norms}\label{lidskii nuis}

Lidskii's additive inequalities (both for eigenvalues and singular values) can be interpreted
as an explicit description of global minimizers of functions that are built on unitarily invariant norms and whose
domains consist of certain orbits of matrices (under the action of the unitary group). In this section, we show
that Lidskii's inequalities actually describe all global minimizers of such functions, and that local minimizers 
are also global minimizers. This last fact will play a central role in the next section, in which we state and study Strawn's 
generalized conjecture.

\subsection{Selfadjoint matrices - eigenvalues}\label{sec 3.1}
\pausa
We begin with the following comments related to
 the classical Lidskii's inequality. 
Fix $S\in\matsad$ and $\mu\in(\R^d)\da$, and consider $\cO_\mu$  given by 
\beq\label{defi omu App nuis}
\cO_\mu=\{G\in\matsad:\la(G)=\mu\}=\{U^* D_{\mu}\,U:\ U\in\matud\} 
\eeq
We consider the usual metric in $\cO_\mu$ induced by the operator norm; hence $\cO_\mu$ is a metric space.

\pausa
For $N$ a strictly convex u.i.n., let
\beq\label{defi nui App}
 \Phi=\phisn:\cO_\mu\rightarrow \R_{\geq 0} \peso{be given by} \Phi(G)=N(S-G). 
\eeq
Using an ONB of eigenvectors of $S$ we can construct $G^{\rm op}\in \cO_\mu$ such that 
$\lambda(S-G{\rm op})=(\la(S)-\mu)\da$. By Lidskii's inequality and Remark \ref{desimayo}, we see that for every 
$G\in\cO_\mu$ we have that 
\beq\label{eq submayo rel}
 \la(S-G^{\rm op})\prec \la(S-G) \implies  s(S-G^{\rm op})=|\la(S-G^{\rm op})|\prec_w |\la(S-G)|=s(S-G)\,. 
\eeq
Hence, Theorem \ref{teo intro prelims mayo} implies that $\Phi(G^{\rm op})=N(S-G^{\rm op})\leq N(S-G)=\Phi(G)$, for $G\in\cO_\mu$.
Therefore, $G^{\rm op}$ is a global minimizer of $\Phi$ in $\cO_\mu$. Conversely, let $G\in \cO_\mu$ be a global minimizer of 
$\Phi$ in $\cO_\mu$. The previous comments together with item 3. in Remark \ref{desimayo} show that 
\beq\label{eq submayo rel2}
\la(S-G^{\rm op})\prec \la(S-G) \py  N(S-G^{\rm op})= N(S-G) \implies \la(S-G^{\rm op})= \la(S-G)\eeq
where we have used the fact that $N$ is strictly convex, the submajorization relation in Eq. \eqref{eq submayo rel} 
and Theorem \ref{teo intro prelims mayo}. In turn, Eq. \eqref{eq submayo rel2} together with Theorem \ref{mrs284} imply that there exists
 an ONB $\{v_i\}_{i\in\I_d}$
of $\C^d$ such 
$$
S=\sum_{i\in\I_d}\la_i \ v_i\otimes v_i \py G=\sum_{i\in\I_d}\mu_i\ v_i\otimes v_i\,,
$$where $(\la_i)_{i\in\I_d}=\la(S)\in (\R^d)\da$; that is, the global minimizer $G$ is obtained from $S$ as $G^{\rm op}$.

\pausa
It is then natural to ask about the structure of local minimizers $G_0$ of the map $\Phi$ in $\cO_\mu$, which is our main problem in this section. 
As we will see, these 
local minimizers are actually global minimizers of $\Phi$ (see Theorem \ref{teo LLTApp} below).

\begin{fed}\label{muchas defis mil nuis}
Let $S,G_0\in\matsad$. We consider 
\begin{enumerate}
\item The product manifold $\matud\times \matud$ endowed with the metric $$d((U_1,V_1),(U_2,V_2))=\max\{\|I-U_1^*U_2\|,\,\|I-V_1^*V_2\| \}\,.$$
\item 
$\Gamma=\Gamma_{(S,G_0)}:\matud\times \matud\rightarrow \matsad_\tau\stackrel{\rm def}{=}\{M\in\matsad:\ \tr(M)=\tau\}$ for $\tau=\tr(S)-\tr(G_0)$, given by
$$ \Gamma(U,V)=U^* S\,U-V^*G_0\,V\peso{for} U,\, V\in\matud\,.$$
\item For a given u.i.n. $N$ on $\mat$, we consider 
$\Delta_{(S,G_0)}^{N}=\Delta:\matud\times \matud\rightarrow \R_{\geq 0}$: 
$$\Delta(U,V)=N(\Gamma(U,V)) \peso{for} U,\, V\in\matud\,.$$
\EOE\end{enumerate}
\end{fed}
\pausa
Our motivation for considering the previous notions comes from the following:
\begin{lem}
\label{lem equiv de los probs}
Let $S\in\matsad$, $\mu\in(\R^d)\da$, $G_0\in\cO_\mu$ and consider the notations from Definition \ref{muchas defis mil nuis}. Given a u.i.n. $N$ on $\mat$, the following conditions are equivalent:
\begin{enumerate} 
\item $G_0$ is a local minimizer of $\Phi$ in $\cO_\mu$ (defined in Eq. \eqref{defi nui App});
\item $(I,I)$ is a local minimizer of $\Delta$ on $\matud\times \matud$.
\end{enumerate}
\end{lem}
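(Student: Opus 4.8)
The plan is to prove the equivalence by exhibiting a local homeomorphism between a neighborhood of $G_0$ in $\cO_\mu$ and a neighborhood of $(I,I)$ in $\matud\times\matud$ (modulo the stabilizer) under which $\Phi$ pulls back to $\Delta$. First I would fix an ONB $\{w_i\}_{i\in\I_d}$ of eigenvectors of $S$ and an ONB diagonalizing $G_0$, and observe that by definition $\Phi(U^*D_\mu\, U) = N(S - U^*D_\mu\, U)$. The key structural observation is that, fixing any unitary $W_0$ with $G_0 = W_0^* D_\mu\, W_0$, every $G\in\cO_\mu$ can be written $G = V^* G_0\, V$ for some $V\in\matud$ (take $V = W_0^* \cdot$ (something)), and conversely $V^* G_0\, V \in \cO_\mu$ for all $V$. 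Thus the map $V \mapsto V^* G_0\, V$ is a surjection $\matud \to \cO_\mu$, and it is an open map (it is the orbit map of a smooth compact group action, hence a submersion onto the orbit). Moreover $\Phi(V^*G_0 V) = N(S - V^*G_0 V) = N(I^* S\, I - V^* G_0\, V) = \Delta(I,V)$.

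So the one-sided version is almost immediate: $G_0 = I^* G_0\, I$ is a local minimizer of $\Phi$ iff $(I,I)$ is a local minimizer of $V\mapsto \Delta(I,V)$, using that $V\mapsto V^*G_0V$ is open and continuous and maps $I\mapsto G_0$. The substance of the lemma is the extra freedom in the first variable $U$: I must show that allowing $U$ to vary near $I$ does not create new ways to decrease $\Delta$ below $\Phi$ near $G_0$. For this I would use unitary invariance of $N$: for any $U,V\in\matud$,
\[
\Delta(U,V) = N(U^*S\,U - V^*G_0\,V) = N\big(S - (UV^*)^* G_0\, (UV^*)\big) = \Phi\big((UV^*)^* G_0 (UV^*)\big),
\]
since $N(U^* X\, U) = N(X)$. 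Hence $\Delta(U,V) = \Phi(V^*S V \ldots)$— more precisely $\Delta(U,V)$ depends on $(U,V)$ only through the single unitary $UV^*$, and equals $\Phi$ evaluated at the corresponding point of $\cO_\mu$. Conjugating by $U$ also does not move us off $\cO_\mu$: $S - (UV^*)^*G_0(UV^*)$ has the same norm as $U^*SU - U^*(UV^*)^*G_0(UV^*)U$, and one checks $(UV^*)$ ranges over a full neighborhood of $I$ in $\matud$ as $(U,V)$ ranges over a neighborhood of $(I,I)$.

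With this reduction in hand, the proof is a short chain. For $(1)\Rightarrow(2)$: suppose $G_0$ is a local minimizer of $\Phi$; pick a neighborhood $\cU$ of $I$ in $\matud$ such that $W\mapsto W^*G_0 W$ maps $\cU$ into the neighborhood of $G_0$ on which $\Phi(G_0)\le\Phi(G)$; then for $(U,V)$ near $(I,I)$ we have $UV^*\in\cU$ (by continuity of $(U,V)\mapsto UV^*$ at $(I,I)$), so $\Delta(U,V) = \Phi((UV^*)^*G_0(UV^*)) \ge \Phi(G_0) = N(S-G_0) = \Delta(I,I)$. For $(2)\Rightarrow(1)$: suppose $(I,I)$ is a local minimizer of $\Delta$; given $G = W^*G_0W\in\cO_\mu$ with $W$ close to $I$ (which covers a full neighborhood of $G_0$ in $\cO_\mu$ since the orbit map is open and $\matud$ is connected — here one uses that the orbit map $\matud\to\cO_\mu$ is a quotient map, so a neighborhood of $G_0$ in $\cO_\mu$ is the image of a neighborhood of $I$), set $(U,V) = (W, I)$, which is near $(I,I)$, and get $\Phi(G) = N(S - W^*G_0 W) = \Delta(W,I) \ge \Delta(I,I) = N(S-G_0) = \Phi(G_0)$.

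The main obstacle, and the only place requiring care, is the topological bookkeeping in the direction $(2)\Rightarrow(1)$: I must guarantee that every $G$ in a small enough neighborhood of $G_0$ in $\cO_\mu$ really is of the form $W^*G_0W$ with $W$ in a prescribed small neighborhood of $I$ in $\matud$. This is the statement that the orbit map $q:\matud\to\cO_\mu$, $q(W) = W^*G_0W$, is open (equivalently, a quotient map onto its image with the subspace topology from the operator norm). This follows from the general fact that the action of a compact Lie group on a manifold by conjugation has closed orbits and the orbit map is open onto the orbit; alternatively one can verify it by hand using that $\matud$ is compact so $q$ is a closed map, and a continuous closed surjection onto a space is a quotient map, together with a local-section / exponential-coordinates argument near $I$. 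I would state this openness as a small preliminary observation (or cite it), since it is also needed implicitly when one later transports the metric structure. Everything else is a direct consequence of the unitary invariance identity $\Delta(U,V) = \Phi\big((UV^*)^* G_0 (UV^*)\big)$.
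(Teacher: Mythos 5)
Your proof is correct and follows essentially the same route as the paper's: the unitary--invariance identity $\Delta(U,V)=\Phi(Z^*G_0\,Z)$ with $Z=VU^*$ close to $I$ gives $(1)\Rightarrow(2)$, and the openness of the orbit map $\matud\ni Z\mapsto Z^*G_0\,Z\in\cO_\mu$ (which the paper simply cites from Andruchow--Stojanoff and Deckard--Fialkow rather than re-proving) gives $(2)\Rightarrow(1)$. The only blemishes are bookkeeping: the conjugating unitary should be $VU^*$ rather than $UV^*$ (since $U^*S\,U-V^*G_0\,V=U^*(S-(VU^*)^*G_0\,(VU^*))\,U$), and in the converse direction you should take $(U,V)=(I,W)$ rather than $(W,I)$ so that $\Delta(I,W)=N(S-W^*G_0\,W)=\Phi(G)$.
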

\begin{proof}

1.$\implies$2. Consider $(U,W)\in \matud\times \matud$ such that 
$$d((U,W),(I,I))=\max\{\,\|I-U^*\|\,,\,\|I-W^*\| \, \}:=\varepsilon\,.$$
Hence, if $Z=WU^*\in\matud\,$ then $ U^* S\,U-W^* G_0\,W=U^*( S- Z^* G_0\, Z)\,U$. 
Notice that 
$$\|Z-I\|=\|W \,(U^*-W^*)\|\leq \|U^*-I\|+\|I-W^*\|\leq 2\,\varepsilon\ \implies$$ 
$$\Delta(U,W)
=N(U^*( S- Z^* G_0\, Z)\,U )= \Phi(Z^*G_0Z)
\peso{with} \| Z^*G_0Z-G_0\|\leq 4\,\varepsilon\|G_0\|\,.$$

2.$\implies$1. This is a consequence of the fact that the map $\matud\ni Z\mapsto 
Z^* G_0\,Z\in\cO_\mu$ is open 
(see, for example,  \cite[Thm. 4.1]{AS}  or \cite{DF}).
\end{proof}

\pausa
In what follows, given $\mathcal S\subset\matsad$ we consider the commutant of $\mathcal S$, denoted $S '$, 
that is the unital $^*$-subalgebra of $\mat$ given by 
$$ \mathcal S'=\{\ C\in\mat:\ [C,D]=0\ \text{ for every }\ D\in\mathcal S\ \}\subset \mat\,,$$
where $[C,D]=CD-DC$ denotes the commutator of $C$ and $D$.

\pausa
Recall that $\matud$ has a natural smooth (differential) manifold structure. Hence, we can consider $\matud\times \matud$ as a smooth manifold, endowed with 
the product structure. 
\begin{lem}\label{lema gama sobre}
Consider the notations from Definition \ref{muchas defis mil nuis}. Then 
$$
\Gamma
 \peso{is a submersion at} (I,I) \quad \iff \quad 
\{S,\,G_0\}'=\C\cdot I \ .
$$
\end{lem}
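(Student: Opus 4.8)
The strategy is to compute the differential of $\Gamma$ at $(I,I)$ and identify its image. Recall that the tangent space to $\matud$ at $I$ is the space $i\,\matsad$ of anti-hermitian matrices (or, depending on the chosen parametrization, we may use $\matsad$ itself via $X \mapsto e^{iX}$ or a curve $t\mapsto e^{tX}U$ with $X$ anti-hermitian). I would fix smooth curves $U(t) = e^{tX}$ and $V(t) = e^{tY}$ with $X, Y$ anti-hermitian, so that $\gamma(t) := \Gamma(U(t), V(t)) = e^{-tX} S\, e^{tX} - e^{-tY} G_0\, e^{tY}$. Differentiating at $t=0$ gives
\beq\label{eq diff gamma}
d\Gamma_{(I,I)}(X,Y) = (SX - XS) - (G_0 Y - Y G_0) = [S,X] - [G_0,Y]\,.
\eeq
Since $X,Y$ range over all anti-hermitian matrices and $S, G_0$ are selfadjoint, the commutators $[S,X]$ and $[G_0,Y]$ are again anti-hermitian; one checks that $\{[S,X]: X \text{ anti-hermitian}\} = \{[S,H] : H \in \matsad\}$ as a subspace of $\matsad$ (multiplying $X$ by $i$ turns anti-hermitian into hermitian and $[S, iH] = i[S,H]$, while the differential lands in the tangent space $T_{\Gamma(I,I)}\matsad_\tau$, which can be identified with the trace-zero selfadjoint matrices). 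Thus the image of $d\Gamma_{(I,I)}$ is
$$
\operatorname{Ran} d\Gamma_{(I,I)} = \{[S,H_1] + [G_0,H_2] : H_1, H_2 \in \matsad\} = \operatorname{Ran}(\operatorname{ad}_S) + \operatorname{Ran}(\operatorname{ad}_{G_0})\,,
$$
where $\operatorname{ad}_A(H) = [A,H]$.

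The core of the argument is then a linear-algebra identity: $\Gamma$ is a submersion at $(I,I)$ precisely when $\operatorname{Ran} d\Gamma_{(I,I)}$ equals the whole tangent space $T_{\Gamma(I,I)}\matsad_\tau \cong \matsad \cap \ker\tr$, and I claim this holds iff $\{S, G_0\}' = \C\cdot I$. The clean way to see this is via the (real) inner product $\langle A, B\rangle = \tr(AB)$ on $\matsad$: since $\operatorname{ad}_A$ is a skew-symmetric operator on $\matsad$ when $A$ is selfadjoint (because $\tr([A,H_1]H_2) = -\tr(H_1[A,H_2])$), its orthogonal complement of the range is its kernel, $\ker(\operatorname{ad}_A) = \{A\}' \cap \matsad$. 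Hence
$$
\big(\operatorname{Ran}(\operatorname{ad}_S) + \operatorname{Ran}(\operatorname{ad}_{G_0})\big)^\perp = \ker(\operatorname{ad}_S)\cap\ker(\operatorname{ad}_{G_0}) = \{S\}'\cap\{G_0\}'\cap\matsad = \{S,G_0\}'\cap\matsad\,.
$$
Now $\operatorname{Ran} d\Gamma_{(I,I)} \subseteq \ker\tr$ always (commutators are traceless), and it equals $\ker\tr \cap \matsad$ iff its orthocomplement inside $\matsad$ is exactly $\R\cdot I$ (the orthocomplement of $\ker\tr$). So submersivity at $(I,I)$ $\iff$ $\{S,G_0\}'\cap\matsad = \R\cdot I$ $\iff$ $\{S,G_0\}' = \C\cdot I$, the last equivalence because a $^*$-subalgebra is spanned (over $\C$) by its selfadjoint part.

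The main obstacle — really the only subtle point — is bookkeeping about which tangent space $\Gamma$ actually maps into and the corresponding identification. One must be careful that the codomain is $\matsad_\tau$, an affine subspace whose tangent space at any point is $\{M \in \matsad : \tr M = 0\}$, and that $\operatorname{ad}_A$ is being regarded as an operator on the real vector space $\matsad$ (not on all of $\mat$), where the trace form is a genuine inner product; this is what makes ``range$^\perp$ $=$ kernel'' available. Once these identifications are pinned down, the equivalence follows from the skew-symmetry of $\operatorname{ad}_A$ and the fact that the selfadjoint part of a unital $^*$-algebra determines it. I would also remark (as the paper does right after) that this matches the intuition: $\Gamma$ fails to be a submersion exactly when $S$ and $G_0$ have a common nontrivial reducing structure, i.e. lie in a common proper $^*$-subalgebra.
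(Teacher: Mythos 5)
Your argument is correct and follows essentially the same route as the paper: compute $D_{(I,I)}\Gamma(X,Y)=[S,X]-[G_0,Y]$ on $(i\cdot\matsad)^2$, pair against the trace form on $\matsad_0$, and use $\tr(Y[S,Z])=\tr([Y,S]Z)$ to identify the orthocomplement of the range with $\{S,G_0\}'\cap\matsad$ modulo $\R\cdot I$ (the paper phrases the last step via spectral projections of a nonzero trace-zero $Y$ in the commutant, you via the fact that a unital $^*$-algebra is spanned by its selfadjoint part — same content). The only blemish is the claim that $\{[S,X]:X \text{ anti-hermitian}\}$ equals $\{[S,H]:H\in\matsad\}$: for hermitian $H$ the commutator $[S,H]$ is anti-hermitian, so the correct statement is that the range is $\{i[S,H]:H\in\matsad\}\subset\matsad$; this does not affect the orthogonality computation.
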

\begin{proof}
The (exponential) map $\matsad\ni X\mapsto \exp(X)$ 
allows us to identify the tangent space $\cT_{I}\matud$ with 
$i\cdot\matsad$. Since we consider the product structure on $\matud\times \matud$ we conclude that the differential of $\Gamma$ satisfies
$$D_{(I,I)}\Gamma (X,0)=[S,X] \py D_{(I,I)}\Gamma (0,X)=[X,G_0] \peso{for} X\in i\cdot \matsad\,. $$ 
Therefore $\Gamma$ is  not a submersion at $(I,I)$ 
if and only if there exists $0\neq Y\in\cT\matsad_\tau=\matsad_0$ 
(i.e. $Y\in\matsad$ such that $\tr\, Y=0$)
such that 
\beq\label{orto} 
\tr(Y\,[S,Z])=\tr(Y\,[Z,G_0])=0 \peso{for every} 
Z\in i\cdot \matsad \ . 
\eeq
Since $\tr(Y\,[S,Z])=\tr([Y,S]\,Z)$ and similarly 
$\tr(Y\,[Z,G_0])=\tr(Z\,[G_0,Y])$,  we see that in this case 
$$[Y,S]=0 =[G_0,Y]\in i\cdot \matsad\ .$$
Moreover, since $Y\neq 0$ and 
$\tr \, Y = 0$, then $Y$ has some non-trivial spectral projection $P$  which also satisfies that $[P,S]=[P,G_0]=0$. Conversely, in case there exists a non-trivial projection $P$ such that 
$[P,S]=[P,G_0]=0$, we can construct 
$ Y = \frac{P}{\tr\, P}  - \frac{I-P}{\tr \,(I- P)} $ 
so that $\tr\, Y =0$. Then 
$0\neq Y\in\cT\matsad_\tau$ and it satisfies Eq. \eqref{orto}, so that this matrix $Y$  is orthogonal to the range of the operator $D_{(I,I)}\Gamma$.
\end{proof}

\begin{pro}\label{pro al final commutan nomas}
Consider the notations from Definition \ref{muchas defis mil nuis} and assume that $N$ is a strictly convex u.i.n. If $(I,I)$ is a local minimizer of $\Delta$ in $\matud\times \matud$ then $[S,G_0]=0$.
\end{pro}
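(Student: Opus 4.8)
The plan is an induction on the size $d$, and it is convenient to prove a statement slightly more general than the Proposition, one that is stable under passing to reducing subspaces. Namely: for every strictly convex u.i.n.\ $N$ on $\mathcal M_{d+e}(\C)$ (any $e\ge 0$), every selfadjoint $C\in\mathcal M_e(\C)$, and all $S,G_0\in\matsad$, if $(I,I)$ is a local minimizer of $(U,V)\mapsto N\big((U^*SU-V^*G_0V)\oplus C\big)$ on $\matud\times\matud$, then $[S,G_0]=0$; the Proposition is the case $e=0$. Carrying the fixed tail $C$ is forced because $N$ does not restrict to a u.i.n.\ on a reducing block of $(S,G_0)$, whereas $M\mapsto N(M\oplus C)$ does stay in a usable class: it is convex (an affine image of $N$) and, since the singular values of $M\oplus C$ are those of $M$ together with those of $C$, a short submajorization computation shows it inherits both conclusions of Theorem~\ref{teo intro prelims mayo} — $s(A)\prec_w s(B)\Rightarrow N(A\oplus C)\le N(B\oplus C)$, and if in addition $N(A\oplus C)=N(B\oplus C)$ then $s(A)=s(B)$ (cancel the common block $s(C)$ from the two equal multisets of singular values). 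The base case $d=1$ is trivial.

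For the inductive step I distinguish two cases. If $\{S,G_0\}'=\C\,I$, then by Lemma~\ref{lema gama sobre} the map $\Gamma$ is a submersion at $(I,I)$; since being a submersion is an open condition, $\Gamma$ is an open map on a neighbourhood of $(I,I)$, so a local minimum at $(I,I)$ of $\Delta(U,V)=N(\Gamma(U,V)\oplus C)$ produces a local minimum of $M\mapsto N(M\oplus C)$ on the flat affine space $\matsad_\tau$, $\tau=\tr S-\tr G_0$, at the point $S-G_0$; by convexity this is a global minimum there. By Remark~\ref{desimayo}(4), $\tfrac{\tau}{d}\,\uno_d\prec\la(M)$ for all $M\in\matsad_\tau$, hence (Remark~\ref{desimayo}(2)) $s(\tfrac{\tau}{d}I)\prec_w s(M)$ and so $N(\tfrac{\tau}{d}I\oplus C)\le N(M\oplus C)$; thus $\tfrac{\tau}{d}I$ is also a global minimizer, and equality of the minimal values gives $s(\tfrac{\tau}{d}I)\prec_w s(S-G_0)$ together with $N(\tfrac{\tau}{d}I\oplus C)=N((S-G_0)\oplus C)$, whence $s(\tfrac{\tau}{d}I)=s(S-G_0)$ by the strict property above. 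Combined with $\tfrac{\tau}{d}\uno_d\prec\la(S-G_0)$ and Remark~\ref{desimayo}(3) this forces $\la(S-G_0)=\tfrac{\tau}{d}\uno_d$, i.e.\ $S-G_0=\tfrac{\tau}{d}I$, and therefore $[S,G_0]=0$.

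If instead $\{S,G_0\}'\neq\C\,I$, I pick a nontrivial projection $P\in\{S,G_0\}'$ and write $\C^d=\cH_1\oplus\cH_2$ with $\cH_1=\mathrm{ran}\,P$; this reduces $S=S_1\oplus S_2$ and $G_0=G_1\oplus G_2$. The map $(U_1,V_1)\mapsto(U_1\oplus I,V_1\oplus I)$ is an isometric embedding of $\mathcal U(\cH_1)\times\mathcal U(\cH_1)$ into $\matud\times\matud$ sending $(I,I)$ to $(I,I)$, so $(I,I)$ is a local minimizer of $(U_1,V_1)\mapsto N\big((U_1^*S_1U_1-V_1^*G_1V_1)\oplus(S_2-G_2)\oplus C\big)$; this is again an instance of the generalized statement in size $\dim\cH_1<d$ (same $N$, new tail $(S_2-G_2)\oplus C$), so the inductive hypothesis yields $[S_1,G_1]=0$, and symmetrically $[S_2,G_2]=0$; hence $[S,G_0]=[S_1,G_1]\oplus[S_2,G_2]=0$.

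The point I expect to require the most care — and the one that makes the induction close — is the bookkeeping around the fixed tail $C$: one must verify that the functions $M\mapsto N(M\oplus C)$ genuinely retain both parts of Theorem~\ref{teo intro prelims mayo} (the equality case being precisely where strict convexity of $N$ is used) and that they are exactly the objects produced by iterating the reducing-block construction. Granting that, the real geometric content lies in the $\{S,G_0\}'=\C\,I$ case, where openness of the submersion $\Gamma$ converts a local minimum of $\Delta$ into a global minimum of a convex function on the affine space $\matsad_\tau$, which the majorization facts of Remark~\ref{desimayo} then pin down to the scalar matrix $\tfrac{\tau}{d}I$.
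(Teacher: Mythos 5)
Your proof is correct, but it takes a genuinely different route from the paper's. The paper does not induct: assuming $[S,G_0]\neq 0$, it extracts a \emph{minimal} projection $P$ of $\{S,G_0\}'$ on which the compressions $S_1,G_1$ still fail to commute, invokes the equality case of Lidskii's theorem (Theorem \ref{mrs284}) to obtain $(\la(S_1)-\la(G_1))\da\prec\la(S_1-G_1)$ with the two sides distinct, and then uses the openness of the submersion (Lemma \ref{lema gama sobre}) to realize an explicit curve $T(t)$ along which the value strictly decreases; the perturbation is extended by the identity on $R(P)^\perp$, which is how the paper sidesteps the fact that $N$ does not restrict to a norm on a block. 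Your tail $C$ solves that same problem by globalizing the statement rather than localizing the contradiction, and the bookkeeping you flag does go through: $s(A)\prec_w s(B)$ implies $s(A\oplus C)\prec_w s(B\oplus C)$ by comparing partial sums of the merged decreasing rearrangements, and the equality case follows by cancelling the common multiset $s(C)$. The more substantive departure is your treatment of the irreducible case: where the paper exhibits a descent direction, you convert the local minimum into a global minimum of the convex map $M\mapsto N(M\oplus C)$ on the affine slice $\matsad_\tau$ and then use $\frac{\tau}{d}\,\uno_d\prec\la(M)$ together with Remark \ref{desimayo} to force $S-G_0=\frac{\tau}{d}\,I$. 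This avoids Theorem \ref{mrs284}(2) entirely and yields the stronger conclusion that $S-G_0$ is scalar there (which incidentally shows that case is vacuous for $d\geq 2$; harmless, since $[S,G_0]=0$ is derived either way). The cost relative to the paper is the induction over the enlarged class of functions $N(\cdot\oplus C)$; the gain is a shorter list of prerequisites and the cleaner mechanism ``local equals global for convex functions'' in place of the curve construction.
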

\begin{proof}
Assume that $[S,G_0]\neq 0$. Then there exists a minimal projection $P$ of the unital $^*$-subalgebra $\mathcal C=\{S,\,G_0\}'\subseteq\mat$ such that 
$[P\,S,P\,G_0]\neq 0$.  Indeed, $I\in\mathcal C$ is a projection such that $[I\,S,I\,G_0]\neq 0$. If $I$ is not a minimal projection in $\mathcal C$ then
there exists $P_1,\,P_2\in\mathcal C$ non-zero projections such that $I=P_1+P_2$; hence $[P_iS,P_iG_0]\neq 0$ for $i=1$ or $i=2$. If the corresponding $P_i$ is not minimal in $\mathcal C$ we can repeat the previous argument (halving) applied to $P_i$. Since we deal with finite dimensional algebras, the previous procedure finds a minimal projection $P\in\mathcal C$ as above. By applying a convenient change of orthonormal basis we can assume that 
$R(P)=\text{span}\{e_i: i\in\I_r\}$, where $r=\rk(P)>1$. Since $P$ reduces both $S$ and $G_0$
 we can consider $S_1=S|_{R(P)}\in\mathcal H(r)$ and 
$G_1=G_0|_{R(P)}\in\mathcal H(r)$. 
By minimality of $P$ we conclude that $\{S_1,\,G_1\}'=\C I_r \subset \cM_r(\C) $.
Using the case of equality of Lidskii's inequality (see Theorem \ref{mrs284}), we conclude that 
$$ b:=(\la(S_1)-\la(G_1))\da\prec a:=\la(S_1-G_1) \py a\neq b\,.$$
If we let $\sigma=\tr(S_1-G_1)$ then, by Lemma \ref{lema gama sobre} the map 
$$\cU(r)\times \cU(r)\ni(U,V)\mapsto U^*S_1\,U-V^*G_1\,V\in\cH(r)_\sigma $$
is a submersion at $(I_r,I_r)$. In particular, for every open neighborhood $\mathcal N $ of $(I_r,I_r)$ in $\cU(r)\times \cU(r)$ the set 
$$ \cM:=\{ U^*S_1\,U-V^*G_1\,V: \ (U,V)\in\mathcal N \}$$ contains an open neighborhood of $S_1-G_1$ in $\cH(r)_\sigma$. 
Consider $\rho:[0,1]\rightarrow (\R_{\geq 0}^{r})\da$ given by $\rho(t)=(1-t)\,a+t\,b$ for $t\in[0,1]$. 
Notice that $\rho(t)\prec a$ and $\rho(t)\neq a$ for $t\in (0,1]$. 
If we let $S_1-G_1=W^*D_{a}\,W$ for $W\in\cU(r)$ then the continuous curve $T(\cdot):[0,1]\rightarrow \cH(r)_\sigma$ given by 
$T(t)=W^*D_{\rho(t)}\,W$ for $t\in[0,1]$ satisfies that $T(0)=S_1-G_1$, $\la(T(t))\prec a$ and $\la(T(t))\neq a$ for $t\in(0,1]$.
Therefore, there exists $t_0\in(0,1]$ such that $T(t)\in\cM$ for $t\in[0,t_0]$ so, in particular, there exists $(U,V)\in\mathcal N$ such that 
$$ T(t_0)=U^*S_1\,U-V^*G_1\,V  \implies
\Delta(U\oplus P^\perp,V\oplus P^\perp)<\Delta(I_d,I_d)  \ , 
$$
because $N$ is a strictly convex u.i.n., 
where $U\oplus P^\perp,\, V\oplus P^\perp\in\matud$ act as the identity on $R(P)^\perp\subset \C^d$.
Since $\cN$ was an arbitrary neighborhood of $(I_r,I_r)$ we conclude that $(I_d,I_d)$ is not a local minimizer of $\Delta$ in $\matud\times \matud$, 
which contradicts Lemma \ref{lem equiv de los probs}.
\end{proof}

\begin{teo}[Local Lidskii's theorem]\label{teo LLTApp}
Let $S\in\matsad$ and $\mu=(\mu_i)_{i\in\I_d}\in(\R^d)\da$. Assume that 
$N$ is a strictly convex u.i.n. and that $G_0\in\cO_\mu$ is a local minimizer of 
$\Phi=\phisn$ on $\cO_\mu\,$. Then, there exists an ONB 
$\{v_i\}_{i\in\I_d}$ of $\C^d$ such that 
\beq\label{eq base buenapp1}
S=\sum_{i\in\I_d}\la_i\ v_{i}\otimes v_{i} \py  G_0=\sum_{i\in\I_d}\mu_i\ v_{i}\otimes v_{i}\,,
\eeq 
where $(\la_i)_{i\in\I_d}=\la(S)\in (\R^d)\da$. 
 In particular, $\la(S-G_0)=(\la(S)-\la(G_0))\da$ so $G_0$ is also a global minimizer of $\Phi$ on $\cO_\mu\,$.
\end{teo}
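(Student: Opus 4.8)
The plan is to first collapse the problem to a commuting pair using the two preceding results, and then to rule out ``misaligned'' commuting pairs by an explicit one--parameter perturbation inside $\cO_\mu\,$.

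First I would apply Lemma \ref{lem equiv de los probs}: since $G_0$ is a local minimizer of $\Phi$ on $\cO_\mu$, the pair $(I,I)$ is a local minimizer of $\Delta=\Delta_{(S,G_0)}^N$ on $\matud\times\matud$. Proposition \ref{pro al final commutan nomas} (which is where strict convexity of $N$ enters) then gives $[S,G_0]=0$. Hence $S$ and $G_0$ are simultaneously diagonalizable: there is an ONB $\{w_i\}_{i\in\I_d}$ of $\C^d$ with $S=\sum_i\la_i\,w_i\otimes w_i$ and $G_0=\sum_i g_i\,w_i\otimes w_i$, where $(\la_i)_i=\la(S)\in(\R^d)\da$ and $(g_i)_i$ is a rearrangement of $\mu$ (using $\la(G_0)=\mu$ since $G_0\in\cO_\mu$). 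Reordering $\{w_i\}$ inside each eigenspace of $S$ (which changes nothing) I may further assume that $g_i\ge g_j$ whenever $i<j$ and $\la_i=\la_j$.

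The heart of the argument is to show $(g_i)_i=\mu$. Suppose not; then by the normalization above there are $i<j$ with $\la_i>\la_j$ \emph{strictly} and $g_i<g_j$. Put $\cH_0=\text{span}\{w_i,w_j\}$, let $R_\theta\in\matud$ act as the planar rotation by angle $\theta$ on $\cH_0$ and as the identity on $\cH_0^\perp$, and set $G_\theta=R_\theta^*\,G_0\,R_\theta\in\cO_\mu$. Since $S$, $G_0$ and $R_\theta$ are all block diagonal for $\C^d=\cH_0\oplus\cH_0^\perp$, we get $S-G_\theta=M(\theta)\oplus(S-G_0)|_{\cH_0^\perp}$, where the second summand is independent of $\theta$ and $M(\theta)=(S-G_\theta)|_{\cH_0}$ is selfadjoint with constant trace. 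A direct $2\times 2$ computation shows that the larger eigenvalue of $M(\theta)$ equals $\tfrac12\tr M(\theta)+\tfrac12\sqrt{u^2+v^2+2uv\cos 2\theta}$ with $u=\la_i-\la_j>0$ and $v=g_j-g_i>0$, which is strictly smaller than its value at $\theta=0$ for every $\theta\in(0,\pi)$. Consequently $\la(M(\theta))\prec\la(M(0))$ with $\la(M(\theta))\neq\la(M(0))$, and, adjoining the common summand $(S-G_0)|_{\cH_0^\perp}$ (adjoining a fixed vector preserves majorization), $\la(S-G_\theta)\prec\la(S-G_0)$ with $\la(S-G_\theta)\da\neq\la(S-G_0)\da$.

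From here the contradiction is routine. By Remark \ref{desimayo} the last relation gives $s(S-G_\theta)\prec_w s(S-G_0)$, so Theorem \ref{teo intro prelims mayo}(1) yields $\Phi(G_\theta)\le\Phi(G_0)$; and if equality held for some $\theta\in(0,\pi)$, then Theorem \ref{teo intro prelims mayo}(2) together with Remark \ref{desimayo}(3) would force $\la(S-G_\theta)\da=\la(S-G_0)\da$, a contradiction. Thus $\Phi(G_\theta)<\Phi(G_0)$ for all small $\theta>0$ while $G_\theta\to G_0$ in operator norm, contradicting that $G_0$ is a local minimizer. Hence $(g_i)_i=\mu$; taking $v_i=w_i$ gives the ONB of \eqref{eq base buenapp1}, so $S-G_0=\sum_i(\la_i-\mu_i)\,v_i\otimes v_i$ and $\la(S-G_0)=(\la(S)-\la(G_0))\da$. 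Finally, by the computation following \eqref{eq submayo rel2} this $G_0$ attains the global minimum of $\Phi$ on $\cO_\mu$. The main obstacle is the explicit $2\times 2$ eigenvalue computation — and, slightly before it, the combinatorial normalization guaranteeing that a misaligned commuting pair really does contain a strict inversion $(\la_i>\la_j,\ g_i<g_j)$ to exploit; the rest is an assembly of the cited facts.
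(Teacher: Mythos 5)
Your proposal is correct and follows essentially the same route as the paper: reduce to the commuting case via Lemma \ref{lem equiv de los probs} and Proposition \ref{pro al final commutan nomas}, then rule out a misaligned simultaneous diagonalization by rotating in a two-dimensional coordinate plane and showing the eigenvalues of the resulting $2\times 2$ block strictly decrease in the majorization order. The only (immaterial) differences are that the paper runs a bubble-sort over adjacent indices and certifies the strict majorization by showing $\tr(R(t)^2)$ is strictly decreasing, whereas you pick an inversion pair directly and compute the top eigenvalue in closed form.
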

\begin{proof}
By Lemma \ref{lem equiv de los probs} and Proposition \ref{pro al final commutan nomas} we conclude that $[S,G_0]=0$. 
Notice that in this case there exists $\cB=\{v_i\}_{i\in\I_d}$ an ONB of $\C^d$ such that 
$$
S=\sum_{i\in\I_d} \la_i \ v_i\otimes v_i
\ , \ G_0=\sum_{i\in\I_d} \nu_i \ v_i\otimes v_i \ \text{ with }\ \la=(\la_i)_{i\in\I_d}\in (\R_{\geq 0}^d)\da \ ,
$$ 
for some $\nu_1,\ldots,\nu_d\in \R$.
We now show that under a suitable permutation of the elements of $\I_d$ we can obtain a representation as in Eq. \eqref{eq base buenapp1} above.
Indeed, assume that $j\in\I_{d-1}$ is such that $\nu_j<\nu_{j+1}$. If we assume that $\la_j>\la_{j+1}$ then consider the continuous curve of unitary operators $U(t):[0,\pi/2)\rightarrow \matud$ given by 
$$U(t)=\sum_{i\in\I_d\setminus\{j,\,j+1\}}v_i\otimes v_i + \cos(t)\ (v_j\otimes v_j 
+v_{j+1}\otimes v_{j+1}) +\sin(t)\ (v_j\otimes v_{j+1}- v_{j+1}\otimes v_j)\ , \ \ t\in[0,\pi/2)\,.$$
Notice that $U(0)=I_d$. We now define the continuous curve $G(t)=U(t) \,G_0\,U(t)^* \in\cO_\mu$, for $t\in[0,\pi/2)$.
Then $G(0)=G_0$ and we have that  
\beq\label{eq SFt}
 S-G(t)=\sum_{i\in\I_d\setminus\{j,\,j+1\}} (\la_i-\nu_i) \ v_i\otimes v_i + \sum_{r,s=1}^2 \gamma_{r,s}(t) \ v_{j+r}\otimes v_{j+s}\,,\eeq
where $M(t)=(\gamma_{r,s}(t))_{r,s=1}^2$ is determined by  
$$ M(t)
= \begin{pmatrix} \la_j & 0 \\0 & \la_{j+1}\end{pmatrix}
- V(t)
\begin{pmatrix}\nu_{j} & 0 \\0 & \nu_{j+1}\end{pmatrix}
V(t)^* \py V(t)=
\begin{pmatrix}\cos(t) & \sin(t) \\-\sin(t) & \cos(t)\end{pmatrix} 
\ , \ \ t\in[0,\pi/2)\,.
$$
Let us consider 
\beq\label{defi R}R(t)=
V^*(t) \begin{pmatrix}
\la_j - \la_{j+1} & 0 \\
0 & 0
\end{pmatrix}V(t)
- 
\begin{pmatrix}
\nu_j & 0 \\
0 & \nu_{j+1}
\end{pmatrix}
\implies M(t)=V(t) \, R(t)\, V^*(t) + \la_{j+1}\, I_2\,.
\eeq
We claim that $\la(R(t))\prec \la(R(0))$ and $\la(R(t))\neq \la (R(0))$ for $t\in(0,\pi/2)$ (i.e., the majorization relation is strict). Indeed, since 
$R(t)$ is a curve in $\cH(2)$ such that $\tr(R(t))$ is constant, it is enough to show that the function $[0,\pi/2)\ni t\mapsto \tr(R(t)^2)$ is strictly decreasing in $[0,\pi/2)$. 
Using that $\la_j-\la_{j+1}>0$ we have that
$$ V^*(t) \begin{pmatrix}\la_j - \la_{j+1} & 0 \\ 0 & 0 \end{pmatrix}V(t) = g(t)\otimes g(t) \peso{where} g(t)=(\la_j-\la_{j+1})^{1/2}(\cos(t),\sin(t)) \ , \ \ t\in[0,\pi/2)\,.
$$ If $D\in\cM_2(\C)$ is the diagonal matrix with main diagonal $(\nu_j,\,\nu_{j+1})$ then 
$R(t)=g(t)\otimes g(t)-D$ so 
$$\tr(R(t)^2)=\tr((g(t)\otimes g(t))^2)+\tr(D^2)-2\,\tr(g(t)\otimes g(t)\ D)=c-2\,\langle D\,g(t),\,g(t)\rangle$$
where $c=\|g(t)\|^4+\nu_j^2+\nu_{j+1}^2=(\la_j-\la_{j+1})^2+\nu_j^2+\nu_{j+1}^2\in\R$ is a constant and 
$$\langle D\,g(t),\,g(t)\rangle =(\la_j-\la_{j+1})\ (\cos^2(t) \, \nu_j+\sin^2(t)\,\nu_{j+1})$$ is strictly increasing in $[0,\pi/2)$, since $\nu_j<\nu_{j+1}$. Thus, $\la(R(t))\prec \la(R(0))$ and $\la(R(t))\neq \la (R(0))$ for $t\in(0,\pi/2)$. Hence, by Eq. \eqref{defi R}, we see that  
$$ \la(M(t))=\la(R(t))+\la_{j+1}\,\uno_2 \implies \la(M(t))\prec \la(M(0)) \ , \ \la(M(t))\neq \la(M(0)) \ ,\ \ t\in(0,\pi/2)\,.$$ 
Then, using Eq. \eqref{eq SFt} and Theorem \ref{teo intro prelims mayo}, for $t\in(0,\pi/2)$ 
$$
\la(S-G(t))\prec \la(S-G_0) \ \ , \ \ \la(S-G(t))\neq \la(S-G_0) \then N(S-G(t))<N(S-G_0).
$$
This last inequality, which is a consequence of the assumption $\la_j<\la_{j+1}$, contradicts 
the local minimality of $G_0$ in $\cO_\mu$. Hence, since $\la_j\leq \la_{j+1}$ we see that $\la_j=\la_{j+1}$; in this 
case, we can consider the basis $\cB'=\{v_i'\}_{i\in\I_d}$ obtained by transposing the vectors 
$v_j$ and $v_{j+1}$ in the basis $\cB$. In this case $S\ v_i'=\la_i\ v_i'$ for $i\in\I_d$, $G_0\ v_i=\nu_i\ v_i'$ for 
$i\in\I_d\setminus\{j,\,j+1\}$ and $G_0\ v_j'=\nu_{j+1}\, v_j'$, $G_0 \ v_{j+1}'=\nu_{j}\, v_{j+1}'$. 
After performing this argument at most $d$ times we get the desired ONB.
\end{proof}

\subsection{Arbitrary matrices - singular values}\label{sec 3.2}

In this section we obtain results related to a local Lidskii's theorem for arbitrary matrices with respect to singular values.
As a consequence, we characterize the case of equality in the classical Lidskii's inequality for singular values. 

\pausa
Recall that if $A,B\in\mat$ then Lidskii's singular value inequality states that 
\begin{equation}\label{lidskii vs sing}
\abs{s(A)-s(B)}\prec_w s(A-B).
\end{equation}
In what follows we fix $A\in\mat$, $s\in(\R^d_{\geq 0})\da$, $s\neq 0$, and $N$ a strictly convex u.i.n. We consider the set of matrices whose vector of singular values is $s$, i.e.
$$
\cV_s:=\llav{C\in\mat:\, s(C)=s},
$$ endowed with the usual metric, induced by the spectral norm. We further consider the function 
$$\Psi_{(N,\,A,\,s)}=\Psi:\cV_s\rightarrow \R_{\geq 0} \peso{given by}
\Psi(C)=N(A-C).
$$ 
With an argument similar to that in the beginning of Section \ref{sec 3.1}, now 
based on the singular value decomposition (SVD) and Lidskii's inequality in Eq. \eqref{lidskii vs sing}, we can explicitly construct global minimizers
 of $\Psi$ on $\cV_s$. As before, we are interested in the structure of local minimizers of $\Psi$ in 
$\cV_s$. 

\pausa
We will describe the structure of local minimizers of 
$\Psi$ in $\cV_s$ and show that local minimizers are
 actually global minimizers. In order to do this we consider the following well known matrix construction: 
for $C\in\mat$, let $\widehat{C}\in\cH(2d)$ be given by
$$
\widehat{C}=
\begin{pmatrix}
0 & C\\
C^* & 0
\end{pmatrix}.
$$
Let $U,V\in \cU(d)$ be such that $C=V^* D_{s(C)} U$, 
 and define $W\in \cU(2d)$ given by
$$
W=\frac{1}{\sqrt{2}}
\begin{pmatrix}
V & U\\
-V & U
\end{pmatrix}.
$$
Then $\widehat{C}= W^*\, (D_{s(C)} \oplus-D_{s(C)}) \,W$, which implies that 
\beq\label{eq orden}
\la(\widehat{C})_i= \begin{cases}  \quad \quad
 s_i(C) & \peso{if} \ 1\leq i\leq d \\
-s_{d-i+1}(C)
& \peso{if}\ d+1\leq i\leq 2d\ .
\end{cases}  
\eeq

\medskip

\begin{fed}
Let $A\coma B \in \mat$, let $N$ be a u.i.n. on $\mat$ and $s\in (\R^d_{\geq 0})\da$, $s\neq 0$. We consider:
\begin{enumerate}
\item The real space $\mathcal S=\{\widehat C:\ C\in\mat\}\subset H(2d)$.
\item The map $\Pi_{(A\coma B)}=\Pi:\matud^4\rightarrow \mathcal S$ given by
\begin{eqnarray}\label{eq defi Pi}\Pi(U_1\coma U_2\coma V_1\coma V_2)&=&(U_1\oplus V_1)^* \, \widehat A  \ (U_1\oplus V_1) -
(U_2\oplus V_2)^* \,\widehat B  \ (U_2\oplus V_2)\\ \nonumber &=& \widehat{U_1^*\,A\,V_1-U_2^*\, B\, V_2}\,.
\end{eqnarray}
\item The map $\Xi_{(A\coma B)}:\matud^4\rightarrow \R_{\geq 0}$ given by 
\beq \label{eq defi Xi}
\Xi(U_1\coma U_2\coma V_1\coma V_2)=N(U_1^*\,A\,V_1-U_2^*\, B\, V_2)\,.
\eeq \EOE
\end{enumerate}
\end{fed}

\begin{lem}\label{lem equiv de los probs para nuis}
Let $A\in\mat$, $s\in(\R_{\geq 0}^d)\da$, $s\neq 0$, and $B\in\cV_s$. 
Given a u.i.n. $N$ on $\mat$, the following conditions are equivalent:
\begin{enumerate} 
\item $B$ is a local minimizer of $\Psi_{(N\coma A\coma s)}$ in $\cV_s$;
\item $(I,I,I,I)$ is a local minimizer of $\Xi_{(A\coma B)}$ on $\matud^4$.
\end{enumerate}
\end{lem}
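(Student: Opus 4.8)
The plan is to mimic the proof of Lemma \ref{lem equiv de los probs}, replacing the action of the unitary group by conjugation with the role of the two-sided action $U_1^*\,(\cdot)\,V_1$ on singular value orbits, and using the $\widehat{\,\cdot\,}$ construction only when convenient. First I would observe that, by the singular value decomposition, the map $\pi:\matud^2\to\cV_s$ given by $\pi(U,V)=U^*\,B\,V$ is surjective onto $\cV_s$ (since $B\in\cV_s$, writing $B=V_0^*D_sU_0$ we get $U^*BV=(U_0U^*)^*D_s(U_0\cdots)$, so every element of $\cV_s$ is attained), and — this is the analogue of the openness fact cited from \cite[Thm.~4.1]{AS} or \cite{DF} in Lemma \ref{lem equiv de los probs} — that $\pi$ is an open map. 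Granting openness of $\pi$, the implication $2.\Rightarrow 1.$ is immediate: if $(I,I,I,I)$ is a local minimizer of $\Xi$ on $\matud^4$, then in particular fixing $U_1=V_1=I$ and letting $(U_2,V_2)$ range near $(I,I)$ we get that $N(A-U_2^*BV_2)=\Xi(I,U_2,I,V_2)\geq \Xi(I,I,I,I)=N(A-B)$ for $(U_2,V_2)$ near $(I,I)$; since $\pi$ is open, $\{U_2^*BV_2:\ (U_2,V_2)\ \text{near}\ (I,I)\}$ contains a neighborhood of $B$ in $\cV_s$, so $B$ is a local minimizer of $\Psi$.

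For $1.\Rightarrow 2.$ I would argue exactly as in Lemma \ref{lem equiv de los probs}: take $(U_1,U_2,V_1,V_2)\in\matud^4$ close to $(I,I,I,I)$, say each of $\|I-U_i^*\|,\|I-V_i^*\|\leq\varepsilon$. Using unitary invariance of $N$,
\[
\Xi(U_1,U_2,V_1,V_2)=N\big(U_1^*AV_1-U_2^*BV_2\big)=N\big(A-U_1(U_2^*BV_2)V_1^*\big)=N\big(A-(U_1U_2^*)\,B\,(V_2V_1^*)\big)=\Psi\big((U_1U_2^*)\,B\,(V_2V_1^*)\big),
\]
and the matrix $(U_1U_2^*)\,B\,(V_2V_1^*)$ lies in $\cV_s$ and satisfies
\[
\big\|(U_1U_2^*)\,B\,(V_2V_1^*)-B\big\|\leq \big\|U_1U_2^*-I\big\|\,\|B\|+\|B\|\,\big\|V_2V_1^*-I\big\|\leq 4\,\varepsilon\,\|B\|,
\]
using $\|U_1U_2^*-I\|\leq\|U_1-I\|+\|I-U_2^*\cdots\|\leq 2\varepsilon$ and likewise for the $V$'s. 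Hence for $\varepsilon$ small enough the argument of $\Psi$ stays in the neighborhood of $B$ on which $\Psi$ attains its local minimum, so $\Xi(U_1,U_2,V_1,V_2)\geq \Psi(B)=\Xi(I,I,I,I)$, proving that $(I,I,I,I)$ is a local minimizer of $\Xi$.

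The main obstacle is the openness of $\pi:\matud^2\to\cV_s$, which is needed for $2.\Rightarrow 1.$ (in the selfadjoint case the corresponding statement was the cited openness of $Z\mapsto Z^*G_0Z$). This should follow from the fact that the continuous surjection $\matud^2\to\cV_s$ is, up to the identification $\cV_s\cong\matud^2/\mathrm{Stab}(B)$, the quotient map of a compact Lie group action on a homogeneous space, so it is automatically open; alternatively one invokes the same references \cite{AS,DF}, which treat joint two-sided unitary orbits, or one reduces to the selfadjoint case via $C\mapsto\widehat C$ (noting $\widehat{U_1^*BV_1}=(U_1\oplus V_1)^*\widehat B(U_1\oplus V_1)$, so openness of the conjugation orbit map for $\widehat B$ transfers to $\pi$ since $C\mapsto\widehat C$ is a linear isometric embedding with $\widehat{\cV_s}$ a subset of the orbit of $\widehat B$). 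I would include whichever of these is cleanest; everything else is the same estimate as in Lemma \ref{lem equiv de los probs}.
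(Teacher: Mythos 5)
Your proposal is correct and is exactly the argument the paper intends: its proof of this lemma is a one-line reference to ``an argument similar to that in the proof of Lemma \ref{lem equiv de los probs}'', and you have carried out precisely that analogue, with the estimate $\|(U_1U_2^*)B(V_2V_1^*)-B\|\leq 4\varepsilon\|B\|$ for $1.\Rightarrow 2.$ and openness of the two-sided orbit map $(U,V)\mapsto U^*BV$ for $2.\Rightarrow 1.$ Of your three routes to openness, the compact-group quotient argument or the citation of \cite{AS,DF} is preferable to the reduction via $\widehat{C}$, since openness of the conjugation orbit of $\widehat{B}$ under all of $\cU(2d)$ does not directly transfer to the restricted map onto $\widehat{\cV_s}$.
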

\begin{proof} An argument similar to that in the proof of Lemma \ref{lem equiv de los probs} shows the equivalence of the items above.
\end{proof}

\pausa Next we develop some geometric properties of $\Pi$. As before, we consider $\matud^4$ as a smooth manifold, endowed with the product structure. 
\begin{lem}\label{lema Pi sobre} Let $A,\, B\in\mat$ and let $\Pi$ be as Eq. \eqref{eq defi Pi}.
Then the following conditions are equivalent:
\begin{enumerate}
\item $\Pi_{(A\coma B)}$ is a submersion at $(I,I,I,I)$;
\item Whenever $Z\in\mat$ is such that $A^*Z\coma AZ^*\coma B^*Z\coma BZ^*\in\matsad$, then $Z=0$.
\end{enumerate}
\end{lem}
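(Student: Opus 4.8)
The plan is to follow the strategy of Lemma \ref{lema gama sobre}: compute the differential $D_{(I,I,I,I)}\Pi$, use that $\Pi$ takes values in the real linear subspace $\mathcal S=\{\widehat C:\ C\in\mat\}$ (so that its differential lands in $\mathcal S$ and $\Pi$ is a submersion at $(I,I,I,I)$ exactly when $D_{(I,I,I,I)}\Pi$ is onto $\mathcal S$), and then rephrase the failure of this surjectivity as the existence of a non-zero element of $\mathcal S$ in the orthogonal complement of the image, with respect to the (non-degenerate) real bilinear form $M,M'\mapsto\Tr(MM')$ on $\cH(2d)$.

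First I would use the exponential chart to identify $\cT_I\matud$ with $i\cdot\matsad$ and, exploiting the product structure on $\matud^4$, write
\[
D_{(I,I,I,I)}\Pi(X_1,X_2,Y_1,Y_2)=[\widehat A,\,X_1\oplus Y_1]-[\widehat B,\,X_2\oplus Y_2],\qquad X_1,X_2,Y_1,Y_2\in i\cdot\matsad,
\]
a short check (already implicit in Eq. \eqref{eq defi Pi}) showing that each commutator lies in $\mathcal S$. Since $X_1,Y_1$ (respectively $X_2,Y_2$) vary independently, the image of $D_{(I,I,I,I)}\Pi$ is the sum of the linear spans of $\{[\widehat A,\,X\oplus Y]\}$ and of $\{[\widehat B,\,X\oplus Y]\}$. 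Hence $\Pi$ is \emph{not} a submersion at $(I,I,I,I)$ if and only if there is $Z\in\mat$, $Z\neq 0$, with $\widehat Z\in\mathcal S$ orthogonal to this image, i.e.
\[
\Tr\bigl(\widehat Z\,[\widehat A,\,X\oplus Y]\bigr)=\Tr\bigl(\widehat Z\,[\widehat B,\,X\oplus Y]\bigr)=0 \qquad\text{for all }X,Y\in i\cdot\matsad.
\]

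The key computation, obtained by multiplying out the $2\times2$ blocks, is the identity
\[
[\widehat Z,\widehat A]=\begin{pmatrix} ZA^*-AZ^* & 0\\ 0 & Z^*A-A^*Z\end{pmatrix},
\]
which shows that $[\widehat Z,\widehat A]$ is automatically block-diagonal (and anti-hermitian). Using $\Tr(\widehat Z\,[\widehat A,\Omega])=\Tr([\widehat Z,\widehat A]\,\Omega)$ for $\Omega=X\oplus Y$ running over all block-diagonal anti-hermitian matrices, together with the non-degeneracy of the real trace form on anti-hermitian matrices, the orthogonality conditions force $[\widehat Z,\widehat A]=0$ and $[\widehat Z,\widehat B]=0$; equivalently $ZA^*=AZ^*$ and $Z^*A=A^*Z$ (and likewise for $B$), which is exactly the assertion that $A^*Z,\ AZ^*,\ B^*Z,\ BZ^*\in\matsad$. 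Therefore $\Pi$ fails to be a submersion at $(I,I,I,I)$ precisely when there exists a non-zero $Z\in\mat$ with $A^*Z,AZ^*,B^*Z,BZ^*\in\matsad$, i.e. precisely when item~2 fails.

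The points requiring care — rather than genuine obstacles — are: verifying that $D_{(I,I,I,I)}\Pi$ really lands in $\mathcal S$, that $M,M'\mapsto\Tr(MM')$ restricts to a non-degenerate form both on $\mathcal S$ and on the block-diagonal anti-hermitian matrices, and the sign bookkeeping in $\Tr(\widehat Z[\widehat A,\Omega])=\Tr([\widehat Z,\widehat A]\Omega)$ with $\widehat Z$ hermitian and $\Omega$ anti-hermitian. The one slightly non-routine observation is that $[\widehat Z,\widehat A]$ is forced to be block-diagonal, which is exactly what collapses the orthogonality relations into the four self-adjointness conditions of item~2; everything else parallels the proof of Lemma \ref{lema gama sobre}.
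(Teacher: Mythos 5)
Your proposal is correct and follows essentially the same route as the paper: identify the tangent space via the exponential chart, compute $D_{(I,I,I,I)}\Pi$, and characterize failure of surjectivity onto $\mathcal S$ by a non-zero $\widehat Z$ orthogonal to the image under the trace form, which unfolds into the four self-adjointness conditions. Your packaging via the block-diagonal commutator $[\widehat Z,\widehat A]$ is just a tidier way of writing the paper's identity $\tr(\widehat Z\,\widehat{-X_1A+AY_1})=\tr(X_1(AZ^*-ZA^*))+\tr(Y_1(A^*Z-Z^*A))$, so the two arguments coincide.
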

\begin{proof}
Notice that since $\Pi$ is a smooth function, item 1. holds if and only if the differential map
$$D=D\Pi_{(I\coma I\coma I\coma I)}:(i\cdot \matsad)^4 \rightarrow \mathcal S\subset \mathcal H(2d)\peso{is surjective .}$$
We now check that $D$ is {\it not} surjective if and only if there exists $Z\in\mat$, $Z\neq 0$, such that 
$A^*Z\coma AZ^*\coma B^*Z\coma BZ^*\in\matsad$. Indeed, it is straightforward to compute
$$D(X_1\coma X_2\coma Y_1\coma Y_2)= \widehat{-X_1\,A+A\,Y_1  + X_2\,B-B\,Y_2 }\peso{for} X_1\coma X_2\coma Y_1\coma Y_2\in i\cdot\matsad\,.$$
Hence, $D$ is not surjective if and only if there exists $Z\in\mat$, $Z\neq 0$, such that 
\begin{equation}\label{eq cond perp}
\widehat Z\ \ \perp  \ \ \widehat{-X_1\,A+A\,Y_1  +X_2\,B-B\,Y_2 }\peso{for} X_1\coma X_2\coma Y_1\coma Y_2\in i\cdot\matsad\,.
\end{equation}
In this case (setting $X_2=Y_2=0$) we have that 
\begin{equation}\label{eq cond perp2}
0=\tr(\widehat Z \ (\widehat{-X_1\,A+A\,Y_1}))=2\,\Preal[\tr(Z^*(-X_1\,A+A\,Y_1))]
\peso{for} X_1\coma Y_1\in i\cdot\matsad\,.
\end{equation} 
Using that $\Preal[\tr(C)]=\tr(\Preal [C])$ and the tracial property, we see that Eq. \eqref{eq cond perp2} is equivalent to 
\begin{equation}\label{eq cond perp3}
0=\tr(X_1\,(AZ^*-ZA^*)) + \tr(Y_1\,(A^*Z-Z^*A)) \peso{for} X_1\coma Y_1\in i\cdot\matsad\,.
\end{equation}
Since $(AZ^*-ZA^*)\coma (A^*Z-Z^*A)\in i\cdot \matsad$, Eq. \eqref{eq cond perp3} holds if and only if 
$$AZ^*-ZA^*=0\py A^*Z-Z^*A=0 \implies AZ^*\coma A^*Z\in\matsad\,.$$
Similarly, by setting $X_1=Y_1=0$ in Eq. \eqref{eq cond perp} and arguing as before, we conclude that 
$BZ^*\coma B^*Z\in\matsad$.

\pausa
Conversely, assume that there exists $Z\in\mat$, $Z\neq 0$, such that $A^*Z\coma AZ^*\coma B^*Z\coma BZ^*\in\matsad$. 
Then, arguing as before, it follows that $Z$ verifies the perpendicularity condition in Eq. \eqref{eq cond perp}; thus, $D$ is not surjective in this case.	
\end{proof}

\begin{pro}\label{pro asumiendo que pi es sumersion}
Fix $A\in \mat$, a strictly convex u.i.n. $N$ on $\mat$ and $s\in (\R^d_{\geq 0})\da$, $s\neq 0$. If $B\in\cV_s$ be a local minimizer of
$\Psi=\Psi_{(N\coma A\coma s)}$ then, $\Pi_{(A\coma B)}$ is not a submersion at $(I\coma I\coma I\coma I)$.
\end{pro}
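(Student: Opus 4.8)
The plan is to prove the contrapositive: assume $\Pi_{(A,B)}$ \emph{is} a submersion at $(I,I,I,I)$, and then show that $B$ cannot be a local minimizer of $\Psi$. This is the exact analogue of the argument in Proposition \ref{pro al final commutan nomas}, but now transported through the dilation $C\mapsto \widehat C$ and using Lidskii's inequality for singular values (Eq. \eqref{lidskii vs sing}) instead of the eigenvalue version. The key technical bridge is Eq. \eqref{eq orden}, which says that the eigenvalues of $\widehat C$ are exactly the singular values of $C$ together with their negatives; hence $N(A-C) = \Psi(C)$ is controlled by the eigenvalues of $\widehat{A-C} = \Pi(U_1,U_2,V_1,V_2)$ whenever $A-C = U_1^*AV_1 - U_2^*BV_2$, and a strict submajorization of eigenvalues of these dilations forces a strict decrease of $N$ by Theorem \ref{teo intro prelims mayo} (applied in $\cM_{2d}(\C)$ with the u.i.n.\ $\widehat C \mapsto N(C)$, or directly noting $s(\widehat C)$ is the symmetrization of $s(C)$).

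The steps, in order, would be: First, suppose $\Pi$ is a submersion at $(I,I,I,I)$; then by the open mapping property of submersions, for every neighborhood $\mathcal N$ of $(I,I,I,I)$ in $\matud^4$ the image $\{\Pi(U_1,U_2,V_1,V_2):(U_1,U_2,V_1,V_2)\in\mathcal N\}$ contains an open neighborhood of $\Pi(I,I,I,I) = \widehat{A-B} = \widehat A - \widehat B$ inside $\mathcal S$. Second, I would produce a continuous curve $C(t)$, $t\in[0,1]$, with $C(0) = A-B$, staying in $\mathcal S$ (i.e.\ of the form $\widehat{D(t)}$), with $s(\widehat{D(t)})\prec s(\widehat{D(0)})$ and $s(\widehat{D(t)})\neq s(\widehat{D(0)})$ for $t\in(0,1]$: the natural choice is $\widehat{D(t)} = W^* D_{\rho(t)} W$ where $\widehat{A-B} = W^* D_{\la(\widehat{A-B})} W$ is a spectral decomposition and $\rho(t) = (1-t)\la(\widehat{A-B}) + t\,b$ with $b := \big(|s(A)-s(B)|\big)\da$ suitably symmetrized — here I must use that, since $B\in\cV_s$ is assumed (for contradiction) not to already be a global minimizer in the relevant sense, $|s(A)-s(B)|$ is strictly submajorized by $s(A-B)$, which is Lidskii's singular value inequality together with the fact that equality would make $B$ essentially of the form $B^{\rm op}$. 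Third, because the curve lives in $\mathcal S$ near $\widehat{A-B}$, openness of $\Pi$ gives $(U_1,U_2,V_1,V_2)\in\mathcal N$ with $\Pi(U_1,U_2,V_1,V_2) = \widehat{D(t_0)}$ for some small $t_0>0$; then $\Xi(U_1,U_2,V_1,V_2) = N(U_1^*AV_1 - U_2^*BV_2)$, and since $s$ of this matrix equals the ``positive half'' of $\la(\widehat{D(t_0)})$, strict submajorization plus strict convexity of $N$ yields $\Xi(U_1,U_2,V_1,V_2) < \Xi(I,I,I,I)$. Fourth, since $\mathcal N$ was arbitrary, $(I,I,I,I)$ is not a local minimizer of $\Xi$, contradicting Lemma \ref{lem equiv de los probs para nuis}.

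A subtlety to handle carefully: the curve $\rho(t)$ must actually be realizable as the eigenvalue list of a matrix in $\mathcal S$, i.e.\ $D_{\rho(t)}$ (after reordering) must be of the form $D_{s}\oplus -D_{s}$ for some $s\in(\R^d_{\geq 0})\da$ — equivalently $\rho(t)$ must be symmetric about $0$ (spectrum invariant under $x\mapsto -x$). Both endpoints have this property ($\la(\widehat{A-B})$ by Eq. \eqref{eq orden}, and the symmetrization of $|s(A)-s(B)|$ by construction), and the property is preserved under convex combinations, so $\rho(t)$ stays admissible; moreover $W^* D_{\rho(t)} W$ will lie in $\mathcal S$ provided $W$ is chosen of the specific block form displayed before the definition, which diagonalizes $\widehat{A-B}$ while respecting the symmetry. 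I would also need to observe that $\la(\widehat{D(t)})\prec\la(\widehat{D(0)})$ follows from $s(D(t))\prec_w s(D(0))$ by symmetrizing (the majorization of the symmetrized vectors is equivalent to the submajorization of the originals when both have the same trace-zero symmetric structure), and that it is strict for $t>0$.

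The main obstacle I anticipate is this bookkeeping around the symmetric spectrum: ensuring the interpolating curve $\rho(t)$ remains an admissible spectrum for $\mathcal S$ (symmetric about the origin), that the resulting $W^*D_{\rho(t)}W$ genuinely lies in the subspace $\mathcal S$ (not merely in $\mathcal H(2d)$), and that the submajorization is strict precisely because $|s(A)-s(B)| \neq s(A-B)$ — which is where one silently invokes that if $B$ were such that equality held, $B$ would already be a global minimizer (and in particular a local one of the ``good'' type), so in the non-trivial case of the contrapositive the strict inequality is available. Everything else (openness of submersions, the identity $\Xi = \Psi$ via Eq. \eqref{eq orden}, strict convexity of $N$) is routine given the earlier results.
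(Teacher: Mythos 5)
Your curve construction and the use of openness of the submersion are essentially the first half of the paper's argument, but there is a genuine gap in how you obtain the \emph{strictness} of the submajorization, and it is precisely at the point you flag as a "subtlety." You propose to get $|s(A)-s(B)|\da\neq s(A-B)$ from the assumption that "$B$ is not already a global minimizer in the relevant sense." No such assumption is available: the proposition must be proved for \emph{every} local minimizer $B$, including the global ones with a joint SVD with $A$, for which $|s(A)-s(B)|\da = s(A-B)$ and your interpolating curve $\rho(t)$ is constant. In that case your argument produces no contradiction and therefore does not show that $\Pi_{(A,B)}$ fails to be a submersion, so the contrapositive is not established. Moreover, the fact that equality in Lidskii's singular value inequality forces a joint SVD is Corollary \ref{coro caract eq in desi Lidskii val sin}, which in the paper is \emph{deduced from} this proposition (via Theorem \ref{desc en vs simultanea}); invoking it here would be circular.

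The paper closes the gap differently. Under the standing assumption that $\Pi$ is a submersion, it first supposes that one of $A^*B,\ AB^*$ is \emph{not} selfadjoint; then $\widehat A$ and $\widehat B$ do not commute, so the equality case of Theorem \ref{mrs284} gives $b=(\la(\widehat A)-\la(\widehat B))\da\prec a=\la(\widehat A-\widehat B)$ with $a\neq b$, and your curve argument (with the symmetric vectors $\tilde a,\tilde b$) runs through and contradicts local minimality via Lemma \ref{lem equiv de los probs para nuis}. This establishes $A^*B,\ AB^*\in\matsad$. The decisive final step, absent from your proposal, is then to take $Z=B\neq 0$: since also $B^*Z, BZ^*\in\matsad$ trivially, Lemma \ref{lema Pi sobre} shows that $\Pi_{(A,B)}$ is \emph{not} a submersion at $(I,I,I,I)$, contradicting the standing assumption. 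So the submersion hypothesis is self-defeating, and no case analysis on whether $B$ happens to achieve equality in Lidskii's inequality is needed. You would need to add this second step (or an equivalent mechanism) for your proof to be complete.
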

\begin{proof}
Assume that $\Pi_{(A\coma B)}$ is a submersion at $(I\coma I\coma I\coma I)$. 
Assume further that any of the conditions $A^*B\coma  A\,B^*\in\matsad$ does not hold.
In this case it is straightforward to check that $\widehat A$ and $\widehat B$ do not commute.
In particular, by Theorem \ref{mrs284} $$b:=(\la(\widehat A)-\la(\widehat B))\da\prec 
a:=\la(\widehat A - \widehat B) \py a \neq b\,.$$
Now, by Eq. \eqref{eq orden} we see that if we let 
$$\tilde b:=(s(A)-s(B)\coma -[s(A)- s(B)]) \ , \ \tilde a:=(s(A-B)\coma -s(A-B))\implies b=(\tilde b) \da \ , \ a=(\tilde a)\da\,. $$
Hence, if we let $\rho:[0,1]\rightarrow \R^{2d}$ be given by $\rho(t)= (1-t)\, \tilde a + t\, \tilde b$ , for $t\in [0,1]$ then: 
\begin{enumerate}
\item $\rho(t)\prec a$ and $\rho(t)\da \neq a$, for every $t\in (0,1]$ ;
\item $\rho(0)=\tilde a$ ;
\item For every $t\in[0,1]$ there exists $c_t\in \R^d$ such that $\rho(t)=(c_t\coma -c_t)$.
\end{enumerate}
In order to see item 1. above, recall that $$ \rho(t)=(1-t)\, \tilde a + t\, \tilde b\prec (1-t)\, (\tilde a)\da
 + t\, (\tilde b)\da= (1-t)\,  a + t\, b \prec a$$ and, $((1-t)\,  a + t\, b)\da=(1-t)\,  a + t\, b\neq a$ (since $a\neq b$), 
for $t\in (0,1]$. Consider a SVD for $A-B= V^*\, D_{s(A-B)} \, U$, for some $U\coma V\in\matud$, and define 
$$
W=\frac{1}{\sqrt{2}}
\begin{pmatrix}
V & U\\
-V & U
\end{pmatrix}\in \mathcal U(2d)\,.
$$
Then $\widehat{A-B}= W^*\, (D_{s(A-B)} \oplus-D_{s(A-B)}) \,W=W^*\ D_{\tilde a}\ W$; Let us consider 
$T(t)= W^* \, D_{\rho(t)}\, W$ for $t\in [0,1]$.
Then, using item 3. above, we see that $T(t)\in \mathcal S$ for $t\in [0,1]$. By the hypothesis on $\Pi_{(A\coma B)}=\Pi$, 
for every open neighborhood of $I\in \mathcal N\subset \matud$, the set
$$ \mathcal M=\{ \Pi(U_1\coma U_2\coma V_1\coma V_2):\ U_i\coma V_i\in\mathcal N\, , \ i=1,2\}$$
contains an open neighborhood of $\widehat{A-B}$ in $\mathcal S$. Since $T:[0,1]\rightarrow \mathcal S$ is 
a continuous curve such that $T(0)=\widehat{A-B}$, then there exists $t_0\in (0,1)$ such that $T(t)\in \mathcal M$, for 
$t\in [0,t_0]$. In particular, there exist $U_i\coma V_i\in\mathcal N$, for $i=1,2$ such that 
$$T(t_0)=\widehat{U_1^*\,A\, V_1 - U_2^*\, B\, V_2} \ \ , \ \ \la(T(t_0))=\rho(t)\da \prec a \ \ , \  \ \la(T(t_0))\neq  a\,.$$
Hence, $s( U_1^*\,A\, V_1 - U_2^*\, B\, V_2)\prec_w s(A-B)$ and $s( U_1^*\,A\, V_1 - U_2^*\, B\, V_2)\neq  s(A-B)$. Using that 
$N$ is a strictly convex u.i.n. we conclude that $$ \Xi_{(A\coma B)}(U_1\coma U_2\coma V_1\coma V_2)
=N(U_1^*\,A\, V_1 - U_2^*\, B\, V_2)< N(A-B)=\Xi_{(A\coma B)}(I\coma I\coma I\coma I)\,.$$
Since $\mathcal N$ is an arbitrary neighborhood of $I$ in $\matud$ we see that $(I\coma I\coma I\coma I)$ is not a local
minimizer of $\Xi_{(A\coma B)}$, which contradicts Lemma \ref{lem equiv de los probs para nuis}. 

\pausa
The previous argument shows that $A^*B\coma AB^*\in\matsad$. If we set $Z=B\in\mat$, we see that 
$$Z\neq 0\py A^*Z\coma AZ^*\coma B^*Z\coma BZ^*\in\matsad\,.$$ 
Now, Lemma \ref{lema Pi sobre} implies that $\Pi$ is not a submersion at $(I\coma I\coma I\coma I)$, which contradicts 
our assumption on $\Pi$; this last fact proves the result.
\end{proof}

\begin{rem}\label{rem sobre la condicion EY}
Let $A\coma B\in\mat$ be such that $A^*B,\, AB^*\in\matsad$. Then, Eckart and Young \cite{EY} claimed that there exist
 matrices $U,\, V\in \matud$ such that
$$U^* A\,V=A \py U^*B\,V=D_\beta \peso{with} \beta\in\R^d\,.$$
Indeed, notice that the hypothesis also holds for $X^*AY$ and $X^*BY$, for any $X\coma Y\in\matud$. Thus, by considering a SVD of $A$ and the previous 
comment, we can assume that
$A=\oplus_{i=1}^k\alpha_i\,I_i$ with $I_i\in\mathcal M_{d_i}(\C)$ the identity matrix, $d_1+\ldots+d_k=d$ and $\alpha_1>\ldots>\alpha_k\geq 0$. 
Let  $\C^d=\oplus_{i=1}^k \C^{d_i}$, and consider the block representation of $B$ with respect to this decomposition, 
$B=(B_{ij})_{i,j=1}^k$.
Under the previous assumption on $A$, we have that $A\,B, AB^*\in\matsad$; then, 
$A\,B=(A\,B)^*=B^*\,A$ and $AB^*=(AB^*)^*=BA$.
These equations imply that
$$ B_{ji}^*\,\alpha_j=\alpha_i\, B_{ij} \py  \alpha_i\,B_{ji}^*=B_{ij}\,\alpha_j \peso{for}1\leq i,j\leq k\,.$$
In particular, if $i\neq j$ and $\alpha_i\neq 0$ we get
$$\alpha_i \, B_{ij} = \alpha_j\, B_{ji}^*=\frac{\alpha_j^2}{\alpha_i}  B_{ij}\implies B_{ij}=0\,.$$
In case that $i\neq j$ and $\alpha_i=0$ then 
$\alpha_j\,B_{ij}=0 \implies B_{ij}=0$ because $\alpha_j\neq \alpha_i=0$. 
And if $\alpha_i\neq 0$ then
$$\alpha_i\, B_{ii}^*=\alpha_i \,B_{ii}\implies B_{ii}=B_{ii}^*\,.$$
Thus $B=\oplus_{i=1}^k B_{ii}$. Let note that if $\alpha_k=0$, the block $B_{kk}\in\cM_{d_k}(\C)$ is arbitrary. Consider now the unitary matrices
$U_i\in\mathcal U(d_i)$ such that $U^*B_{ii}U=D_{\gamma_i}$, with $\gamma_i\in\R^{d_i}$ for $\alpha_i\neq 0$ (that includes $1\leq i\leq k-1$), 
and eventually (when $\alpha_k=0$), a SVD $U_k^*B_{kk}V=D_{\gamma_k}$ for  
$U_k,\, V_k\in\mathcal U(d_k)$, with $\gamma_k\in \R_{\geq 0}^{d_k}$. Then, taking 
$$ U=\oplus_{i=1}^k U_i \py V=\oplus_{i=1}^{k-1}U_i\oplus V_k,$$ 
and setting $\beta=(\gamma_1\coma \ldots\coma \gamma_k)\in\R^d$, we get
$$ U^*AV=A \py  U^*BV=\oplus_{i=1}^k D_{\gamma_i}=D_{\beta}\,.$$
\EOE
\end{rem}

\begin{pro}\label{estruct de min loc de Lidskki para val sing}
Fix $A\in \mat$, a strictly convex u.i.n. $N$ on $\mat$ and $s\in (\R^d_{\geq 0})\da$, $s\neq 0$. 
Let $B\in\cV_s$ be a local minimizer of
$\Psi=\Psi_{(N\coma A\coma s)}$. Then, 
$A^*B\coma  A\,B^*\in\matsad$. 
\end{pro}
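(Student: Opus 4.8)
The statement is exactly the conclusion $A^*B,\,AB^*\in\matsad$ that was already extracted inside the proof of Proposition~\ref{pro asumiendo que pi es sumersion}; so the natural strategy is to combine that proposition with Lemma~\ref{lema Pi sobre} and then remove the auxiliary submersion hypothesis. First I would note that, by Lemma~\ref{lem equiv de los probs para nuis}, the local minimality of $B$ for $\Psi$ is equivalent to $(I,I,I,I)$ being a local minimizer of $\Xi_{(A\coma B)}$ on $\matud^4$. The core of the argument is then a dichotomy on whether $\Pi_{(A\coma B)}$ is a submersion at $(I,I,I,I)$.

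\textbf{Key steps.} Step 1: Suppose first that $\Pi_{(A\coma B)}$ \emph{is} a submersion at $(I,I,I,I)$. Then Proposition~\ref{pro asumiendo que pi es sumersion} applies directly and yields a contradiction, so this case is actually vacuous; equivalently, Proposition~\ref{pro asumiendo que pi es sumersion} tells us $\Pi_{(A\coma B)}$ is \emph{not} a submersion at $(I,I,I,I)$. Step 2: Since $\Pi_{(A\coma B)}$ is not a submersion at $(I,I,I,I)$, Lemma~\ref{lema Pi sobre} gives a matrix $Z\in\mat$, $Z\neq 0$, with $A^*Z,\,AZ^*,\,B^*Z,\,BZ^*\in\matsad$. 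Step 3: From this $Z$ I want to deduce the desired statement about $B$ itself. The idea is that the set of $Z$ satisfying these four selfadjointness conditions is a real-linear subspace $\mathcal Z$ of $\mat$, and I must show $B\in\mathcal Z$. Here I would re-examine the proof of Proposition~\ref{pro asumiendo que pi es sumersion}: that proof shows that \emph{if} $A^*B,\,AB^*\in\matsad$ fails, then already $\widehat A$ and $\widehat B$ fail to commute, and then the curve-lifting argument (using the submersion property restricted to a minimal reducing projection, exactly as in Proposition~\ref{pro al final commutan nomas}) produces a strictly smaller value of $\Xi$ in every neighborhood of $(I,I,I,I)$. So the cleanest route is not to pass through the abstract $Z$ at all, but rather: assume for contradiction that $A^*B\notin\matsad$ or $AB^*\notin\matsad$, restrict to a minimal projection $P$ in the commutant $\{\widehat A,\,\widehat B\}'$ (adapted to the $\widehat{\,\cdot\,}$ structure) on which $\widehat A$ and $\widehat B$ do not commute, invoke Lemma~\ref{lema Pi sobre} on that block to see the relevant $\Pi$ is a submersion there, and then run the $\rho(t)$/$T(t)$ curve argument from Proposition~\ref{pro asumiendo que pi es sumersion} to contradict local minimality via Lemma~\ref{lem equiv de los probs para nuis}. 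Step 4: Conclude $A^*B,\,AB^*\in\matsad$.

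\textbf{Main obstacle.} The delicate point is Step 3: the argument of Proposition~\ref{pro asumiendo que pi es sumersion} was run under a global submersion hypothesis on $\Pi_{(A\coma B)}$, but here that hypothesis may fail, so I must localize to a block where a \emph{restricted} version of $\Pi$ is a submersion. This requires checking that the $\widehat{\,\cdot\,}$ construction and the block decomposition induced by a minimal projection of $\{\widehat A,\,\widehat B\}'$ are compatible — i.e. that a minimal reducing projection for the pair $(\widehat A,\widehat B)$ can be taken of the block-diagonal form $Q\oplus Q$ coming from a common reducing subspace for $A^*A$, $B^*B$ and the relevant cross terms — so that the restriction of $\Pi$ to the corresponding smaller unitary groups still has the form in Eq.~\eqref{eq defi Pi} and Lemma~\ref{lema Pi sobre} applies verbatim to the restricted data. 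Once that structural compatibility is in place, the curve-lifting and strict-convexity contradiction go through exactly as before. I expect the write-up to be short, essentially: "By Proposition~\ref{pro asumiendo que pi es sumersion}, $\Pi_{(A\coma B)}$ is not a submersion at $(I,I,I,I)$; by Lemma~\ref{lema Pi sobre} there is $0\neq Z$ with $A^*Z,AZ^*,B^*Z,BZ^*\in\matsad$; a refinement of the argument in Proposition~\ref{pro asumiendo que pi es sumersion}, applied on a minimal block of $\{\widehat A,\widehat B\}'$, upgrades this to $B$ in place of $Z$," with the block-compatibility verification being the one genuinely new ingredient.
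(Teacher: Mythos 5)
Your Steps 1 and 2 match the paper: Proposition \ref{pro asumiendo que pi es sumersion} shows $\Pi_{(A\coma B)}$ is not a submersion at $(I,I,I,I)$, and Lemma \ref{lema Pi sobre} then produces $Z\neq 0$ with $A^*Z\coma AZ^*\coma B^*Z\coma BZ^*\in\matsad$. The gap is in Step 3. You propose to bypass $Z$ and imitate Proposition \ref{pro al final commutan nomas}: pass to a minimal projection of $\{\widehat A,\widehat B\}'$ on which the restrictions do not commute, argue that the restricted $\Pi$ is a submersion there, and rerun the curve argument. This cannot work, because in the singular-value setting "trivial commutant" and "submersion" are no longer equivalent in the way Lemma \ref{lema gama sobre} made them for $\Gamma$. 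The obstruction in Lemma \ref{lema Pi sobre} is a nonzero $Z$ such that $\widehat Z\in\{\widehat A,\widehat B\}'$, and such a $Z$ exists even when $A$ and $B$ are, say, selfadjoint with $\{A,B\}'=\C I$ (take $Z=I$). In that situation $\{\widehat A,\widehat B\}'$ is spanned by $I_{2d}$ and $\widehat I$, and its minimal projections are $\tfrac12(I_{2d}\pm\widehat I)$, which are \emph{not} of the block-diagonal form $Q\oplus Q'$ compatible with the $\widehat{\,\cdot\,}$ structure. So the "block-compatibility verification" you flag as the remaining technical point is not a check that succeeds -- it fails precisely in the case that matters, and no compatible minimal block on which the restricted $\Pi$ is a submersion need exist. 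This is why Proposition \ref{pro asumiendo que pi es sumersion} stops at "not a submersion" instead of concluding $A^*B\coma AB^*\in\matsad$ outright.

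The paper's actual proof keeps the witness $Z$ and uses three ingredients absent from your proposal: (i) an SVD of $Z$ together with Remark \ref{rem sobre la condicion EY} (Eckart--Young) to decompose $A=\oplus_j A_j$ and $B=\oplus_j B_j$ into blocks indexed by the distinct singular values $\sigma_j$ of $Z$, with $A_j\coma B_j$ \emph{Hermitian} whenever $\sigma_j\neq 0$; (ii) on those Hermitian blocks, each $B_j$ is a local minimizer of the corresponding eigenvalue problem, so Theorem \ref{teo LLTApp} (the already-proved selfadjoint local Lidskii theorem) gives $[A_j,B_j]=0$; (iii) on the kernel block $\sigma_k=0$, which has dimension $m_k<d$, an induction on the dimension $d$ closes the argument. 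In particular the reduction to the selfadjoint case and the induction on $d$ are essential and cannot be replaced by the curve-lifting argument you sketch.
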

\begin{proof} We argue by induction on the dimension $d\geq 1$. Indeed, in case $d=1$ then
the result follows from the fact that, given $a\in\C$, any local minimizer $b$ of the function $f(c)=|a-c|$ for $c\in\{z\in\C:\ |z|=s>0\}$
satisfies that $\bar a\cdot b\in\R$, and then also $a\cdot \bar b\in\R$.

\pausa
We assume that the result holds for all dimension $\tilde d$ such that $1\leq \tilde d\leq d-1$. Let $A\coma B\in\mat$ be such that $B$ is a local minimizer 
of $\Psi$ in $\cV_s$. Notice that by Proposition \ref{pro asumiendo que pi es sumersion}, 
$\Pi_{(A\coma B)}$ is not a submersion at $(I\coma I\coma I\coma I)$. 
By Lemma \ref{lema Pi sobre}, we conclude that there exists $Z\in\mat$, $Z\neq 0$, such that 
$A^*\,Z\coma A\,Z^*\coma B^*\,Z\coma B\,Z^*\in\matsad$. Consider a SVD, $D_{s(Z)}=U^*\, Z\, V$, for $U,\, V\in \matud$.
By replacing $A$ and $B$ by $U^*\, A\, V$ and $U^*\, B\, V$ we can further assume that $Z=D_{s(Z)}$, 
where $s(Z)=(s_i(Z))_{i\in\I_d}\in (\R^d_{\geq 0})\da$. We let:
\begin{enumerate}
\item $\sigma(Z)=\{\sigma_1>\ldots>\sigma_k\}$ be the distinct eigenvalues of $Z=D_{s(Z)}\in \matpos$.
\item $I_j=\{i\in\I_d:\ s_i(Z)=\sigma_j\}$ and $m_j=\# (I_j)$, for $j\in\I_k$.
\end{enumerate}
Notice that since $Z\neq 0$ then $\sigma_1>0$. Using that $A^*Z\coma AZ^*\coma B^*Z\coma BZ^*\in\matsad$ with
 $Z=\oplus_{j\in\I_k} \sigma_j\, I_j$ and Remark \ref{rem sobre la condicion EY}, we conclude that: 
\beq \label{eq rep a,b,a-b}
A=\oplus_{j\in\I_k} A_j \py B=\oplus_{j\in\I_k} B_j\ \ \implies \ \ A-B=\oplus_{j\in\I_k} A_j -B_j \, ,
\eeq
where $I_j\coma A_j\coma B_j \in\mathcal M_{m_j}(\C)$, for $j\in\I_k$; moreover, $A_j\coma B_j \in\mathcal H(m_j)$, whenever $\sigma_k\neq 0$, 
for $j\in\I_k$. Using the fact that $B$ is a local minimizer of $\Psi$ on $\cV_s$ we see that 
$$B_j \ \text{is a local minimizer of} \ \Psi_{(N_j\coma A_j\coma s(B_j))} \  , \ \text{for}  \ j\in\I_k\, ,$$
where $N_j$ is the strictly convex u.i.n. on $M_{m_j}(\C)$ given by $N_j(C)=N(C\oplus 0_{d-m_j})$.
In turn, this last fact shows that for each $j\in\I_k$ for which $\sigma_j\neq 0$ - which includes all $1\leq j\leq \max\{k - 1\coma 1\}$ -
$B_j$ is a local minimizer of $\Phi_{(N_j\coma A_j\coma \la(B_j))}$; Theorem
\ref{teo LLTApp} shows that $A_j$ and $B_j$ commute, so
$A_j^*B_j\coma A_j\, B_j^*\in \mathcal H(m_j)$, for $j\in\I_k$ such that $\sigma_j\neq 0$.
Therefore, we consider two possible cases: on the one hand, if $\sigma_k\neq 0$ then
the previous remarks show that $$A^*\,B=\oplus_{j\in\I_k} A_j^*\,B_j\in \mathcal H(d)\,,$$ and similarly, $A\,B^*\in\matsad$.

\pausa
On the other hand, if $\sigma_k=0$, notice that $m_k=\dim \ \ker Z<d$ (since $Z\neq 0$), and 
$B_k\in M_{m_k}(\C)$ is a local minimizer of  $\Psi_{(N_k\coma A_k\coma s(B_k))}$. 
In this case we can apply the inductive hypothesis and conclude that $A_k^*\,B_k\coma A_k\, B_k^*\in\mathcal H(m_k)$.
Since we have already showed that $A_j^*\,B_j\coma A_j\, B_j^*\in\mathcal H(m_j)$, for $1\leq j\leq k-1$, we now see that 
$A^*\,B\coma A\, B^*\in\mathcal H(d)$.
\end{proof}
\begin{teo}\label{desc en vs simultanea}
Let $A\in\mat$, $s\in(\R^d_{\geq 0})\da$ and $N$ be a strictly convex u.i.n. 
If $B$ is a local minimizer of $\Psi$ in $\cV_s$ then
$A$ and $B$ have a joint SVD i.e.,
there exist $U,V\in\cU(d)$ such that 
$$
A=U^*D_{s(A)}V \py B=U^*D_{s(B)}V.
$$ In particular, $s(A-B)=|s(A)-s(B)|\da$ and $B$ is a global minimizer of $\Psi$ in $\cV_s$.
\end{teo}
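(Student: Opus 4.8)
The plan is to combine Proposition~\ref{estruct de min loc de Lidskki para val sing} with the Eckart--Young normal form of Remark~\ref{rem sobre la condicion EY}, add one perturbation argument, and then reduce to the eigenvalue case already settled in Theorem~\ref{teo LLTApp}. By Proposition~\ref{estruct de min loc de Lidskki para val sing}, a local minimizer $B$ of $\Psi$ in $\cV_s$ satisfies $A^*B,\,AB^*\in\matsad$. Applying an SVD of $A$ and then Remark~\ref{rem sobre la condicion EY} to the resulting pair, we obtain $U_0,V_0\in\matud$ with $U_0^*AV_0=D_{s(A)}$ and $U_0^*BV_0=D_\beta$ for some $\beta\in\R^d$, $|\beta|\da=s$; moreover the construction in Remark~\ref{rem sobre la condicion EY} diagonalizes the kernel block of $A$ through an SVD, so $\beta_i\ge0$ at every coordinate with $s_i(A)=0$. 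Since $C\mapsto U_0^*CV_0$ is an isometry of $\cV_s$ onto itself under which $\Psi$ corresponds to $C\mapsto N(D_{s(A)}-C)$, and a joint SVD for $(D_{s(A)},D_\beta)$ immediately yields one for $(A,B)$, we may and do assume from now on that $A=D_{s(A)}$ and $B=D_\beta$, with $\beta\in\R^d$, $|\beta|\da=s$, and $\beta_i\ge0$ on $\ker A$.

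The crucial step is to show that $\beta\ge0$. Suppose $\beta_{i_0}<0$ for some $i_0$; then necessarily $s_{i_0}(A)>0$. Consider the curve $B(t)=D_{\gamma(t)}$ with $\gamma(t)_{i_0}=e^{it}\beta_{i_0}$ and $\gamma(t)_k=\beta_k$ for $k\ne i_0$: only a phase is modified, so $B(t)\in\cV_s$ for all $t$ and $B(0)=B$. The matrix $A-B(t)=D_{s(A)-\gamma(t)}$ is diagonal, and the moduli of its entries are unchanged except the $i_0$-th one, which equals $\bigl|\,s_{i_0}(A)+e^{it}|\beta_{i_0}|\,\bigr|<s_{i_0}(A)+|\beta_{i_0}|=|s_{i_0}(A)-\beta_{i_0}|$ for $t\in(0,\pi)$ (here we use $s_{i_0}(A)\,|\beta_{i_0}|>0$). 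Hence $s(A-B(t))\leqp s(A-B(0))$ with $s(A-B(t))\ne s(A-B(0))$, so $s(A-B(t))\prec_w s(A-B(0))$ strictly (Remark~\ref{desimayo}); by Theorem~\ref{teo intro prelims mayo} and the strict convexity of $N$ this gives $\Psi(B(t))<\Psi(B)$ for small $t>0$, contradicting the local minimality of $B$. Therefore $\beta\ge0$.

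Now $B=D_\beta\in\matpos$, so $\la(B)=s(B)=s$ and the conjugation orbit $\cO_s=\{G\in\matsad:\la(G)=s\}$ is contained in $\cV_s$. Consequently $B\in\cO_s$ is a local minimizer of $\Phi=\phisn$ (with $S=A$ and $\mu=s$) on $\cO_s$, and Theorem~\ref{teo LLTApp} provides an orthonormal basis $\{v_i\}_{i\in\I_d}$ of $\C^d$ with $A=\sum_i\la_i(A)\,v_i\otimes v_i$ and $B=\sum_i s_i\,v_i\otimes v_i$. Since $A\in\matpos$ we have $\la(A)=s(A)$, so $A=W^*D_{s(A)}W$ and $B=W^*D_{s(B)}W$ for the unitary $W$ determined by $\{v_i\}$ --- a joint SVD, which transfers back to the original $A,B$ as explained above. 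For the two remaining assertions, from any joint SVD one obtains $s(A-B)=s(D_{s(A)-s(B)})=|s(A)-s(B)|\da$; and for every $C\in\cV_s$ we have $s(C)=s=s(B)$, so Lidskii's inequality~\eqref{lidskii vs sing} yields $s(A-B)=|s(A)-s(B)|\da=|s(A)-s(C)|\da\prec_w s(A-C)$, whence $\Psi(B)=N(A-B)\le N(A-C)=\Psi(C)$ by Theorem~\ref{teo intro prelims mayo}; thus $B$ is a global minimizer.

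The main obstacle I expect is precisely the passage from ``$A^*B,\,AB^*\in\matsad$'' --- all that Proposition~\ref{estruct de min loc de Lidskki para val sing} supplies --- to ``$\beta\ge0$''. Remark~\ref{rem sobre la condicion EY} handles the kernel of $A$ for free, but on the support of $A$ one must genuinely use directions of $\cV_s$ that are transverse to the conjugation orbit of $B$, and the phase-rotation curve of the second paragraph is exactly such a direction. Once $B$ is known to be positive semidefinite the conjugation orbit $\cO_s$ sits inside $\cV_s$ and the problem collapses onto the eigenvalue statement already proved in Theorem~\ref{teo LLTApp}.
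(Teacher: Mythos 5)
Your proposal is correct and follows essentially the same route as the paper: reduce to $A=D_{s(A)}$, $B=D_\beta$ via Proposition \ref{estruct de min loc de Lidskki para val sing} and Remark \ref{rem sobre la condicion EY}, kill negative entries of $\beta$ with a phase-rotation curve, and finish with Theorem \ref{teo LLTApp}. If anything, you are slightly more careful than the paper in noting that $\beta_{i_0}<0$ forces $s_{i_0}(A)>0$ (needed for the strict decrease of the phase curve), and your positive application of Theorem \ref{teo LLTApp} to extract the joint eigenbasis is a touch cleaner than the paper's contrapositive step.
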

\begin{proof}
 Notice that if $B$ is a local minimizer of $\Psi$ in $\cV_s$ and $X^*AY=D_{s(A)}$ is a SVD
 of $A$ for some $X,\, Y\in\matud$, we can replace  $A$ by $D_{s(A)}$ and $B$ by $X^*BY$ to get
$$
N(A-B)=N(D_{s(A)}-X^*BY).
$$
Since $\cV_s\ni C\mapsto X^*CY\in \cV_s$ is a homeomorphism of $\cV_s$ then we can assume,
 without loss of generality, that $A=D_{s(A)}$. By Proposition \ref{estruct de min loc de Lidskki para val sing} we get that $A\,B,\, A\,B^*\in\matsad$; 
then by \cite{EY} (see Remark \ref{rem sobre la condicion EY}) there exist matrices $U,\, V\in \matud$ such that
$$ U^* A\,V=D_{s(A)}\,(=A) \py U^*B\,V=D_\beta\peso{with} \beta \in\R^d\,. $$
Suppose now that $\beta\notin \R_{\geq 0}^{d}$, so there exists $1\leq \ell \leq d$ such that $\beta_\ell<0$.
Notice that the function $f(t):[0,\pi]\rightarrow \R_{\geq 0}$ given by 
$$f(t)=|s_\ell(A)- e^{it}\beta_\ell| \peso{for} t\in[0,\pi]$$ is strictly decreasing. Let
$W(t)=(w_{jk})_{j,\,k\,\in\I_d}\in\matud$ be the diagonal matrix whose main diagonal is  given 
 by $w_{jj}=1$ for all $j\neq \ell$, and $w_{\ell\ell}=e^{it}$ for $t\in[0,\pi]$;
hence $W(0)=I$. Define
$$
B(t)=U\,W(t)D_\beta V^* \peso{for} t\in [0,\pi]\,. 
$$  
Then $B(t)$ is a continuous curve such that $B(0)=B$, $B(t)\in \cV_{s}$ for $t\in [0,\pi]$, and 
$$
\Psi(B(t))= N(U\,(D_{s(A)} -D_{\beta(t)})\, V^*)=N(D_{|\alpha-\beta(t)|}) \ ,
$$ where $\beta(t)=s(B(t))$ for $t\in [0,\pi]$. Hence, $\beta_j(t)=\beta_j$ for $j\neq \ell$ and $\beta_\ell(t)=e^{it}\,\beta_\ell$. Therefore,
$|\alpha_j-\beta_j(t)|=\alpha_j-\beta_j$ is constant for $j\neq \ell$ and $|\alpha_\ell-\beta_\ell(t)|=f(t)$ for $t\in[0,\pi]$. Since $f$ is strictly decreasing, we conclude that $$|\alpha-\beta(t)|\prec_w  |\alpha-\beta| \ \implies \ \Psi(B(t)) \ \text{is strictly decreasing for} \ t\in [0,\pi]\,.$$ This last fact contradicts the assumption of $B$.
Therefore $\beta \in\R_{\geq 0}^{d}$. 

\pausa
Suppose now that $\beta\neq s=s(B)$ i.e. $\beta\neq \beta\da$; since $A=D_{s(A)}$ with $s(A)=s(A)\da$ then, 
by Theorem \ref{teo LLTApp}, $D_\beta$ is not a local minimizer 
of $\Phi=\Phi_{(N,\,D_{s(A)},\,s)}$ on $\cO_{s}\,$. Then, there exists a continuous curve
$\delta(t):[0,1]\rightarrow \matud$ such that $\delta(0)=I$ and 
$h(t)=N(D_{s(A)}-\delta(t)^*\, D_\beta\, \delta(t)\,)$,
is strictly decreasing in $[0,1]$. Therefore, if we let $\tilde B(t)=U\,\delta(t)^*\, D_\beta\,\delta(t)\,V^*$ for 
$t\in [0,1]$ then $\tilde B(0)=B$, $\tilde B(t)\in \cV_s$ for $t\in [0,1]$, and the function $\Psi(\tilde B(t))=h(t)$ 
is strictly decreasing in $[0,1]$. These facts contradict our assumption that $B$ is a local minimizer of $\Psi$ in $\cV_s$. 
Hence, $U^*BV=D_{s}$ and then, $s(A-B)=\abs{s(A)-s}\da$ which implies that $B$ is a global minimizer of $\Psi$ in $\cV_s$.
\end{proof}

\begin{cor}[Equality in Lidskii's inequality for singular values]\label{coro caract eq in desi Lidskii val sin} Let $A,\, B\in\mat$. 
Then $|s(A)-s(B)|\da=s(A-B)$ if and only if $A$ and $B$ have a joint SVD.
\end{cor}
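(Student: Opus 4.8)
The plan is to derive the corollary directly from Theorem~\ref{desc en vs simultanea}, together with Lidskii's singular value inequality and Theorem~\ref{teo intro prelims mayo}; the point is to recognize that, under the stated equality hypothesis, $B$ is a minimizer of a suitable distance function, so that the local-to-global result of the previous theorem can be applied. The two implications are of quite different weight: one is a one-line computation, the other repackages Theorem~\ref{desc en vs simultanea}.

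For the implication ``joint SVD $\Rightarrow$ equality'', I would simply note that if $A=U^*D_{s(A)}V$ and $B=U^*D_{s(B)}V$ with $U,V\in\matud$, then $A-B=U^*(D_{s(A)}-D_{s(B)})V=U^*D_{s(A)-s(B)}V$, and, the middle factor being diagonal, $s(A-B)=s(D_{s(A)-s(B)})=\abs{s(A)-s(B)}\da$ by unitary invariance of the singular values.

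For the converse, assume $\abs{s(A)-s(B)}\da=s(A-B)$ and put $s=s(B)$, so that $B\in\cV_s$. If $s=0$ then $B=0$ and $A$ and $B=0$ trivially have a joint SVD (take any SVD $A=U^*D_{s(A)}V$ and write $0=U^*D_0V$), so I may assume $s\neq0$ and fix a strictly convex u.i.n.\ $N$ on $\mat$, e.g.\ the Frobenius norm; set $\Psi=\Psi_{(N\coma A\coma s)}$. First I would show that $B$ is a global minimizer of $\Psi$ on $\cV_s$: for any $C\in\cV_s$, Lidskii's inequality \eqref{lidskii vs sing} gives $\abs{s(A)-s(C)}\prec_w s(A-C)$, and since $s(C)=s$ this reads $\abs{s(A)-s}\da\prec_w s(A-C)$, whence $N(D_{\abs{s(A)-s}\da})\leq N(A-C)=\Psi(C)$ by Theorem~\ref{teo intro prelims mayo} (the singular value vector of $D_{\abs{s(A)-s}\da}$ being $\abs{s(A)-s}\da$ itself); on the other hand, the hypothesis gives $s(A-B)=\abs{s(A)-s}\da$, so by unitary invariance $\Psi(B)=N(A-B)=N(D_{\abs{s(A)-s}\da})$, which is exactly the lower bound just obtained. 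Hence $B$ is a global — in particular a local — minimizer of $\Psi$ on $\cV_s$, and Theorem~\ref{desc en vs simultanea} yields a joint SVD of $A$ and $B$.

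I do not expect a real obstacle: the argument is an assembly of Theorem~\ref{desc en vs simultanea}, the Lidskii bound for singular values, and Theorem~\ref{teo intro prelims mayo}. The only points requiring a little care are the bookkeeping observation that attaining the Lidskii lower bound forces $B$ to be a minimizer (so that Theorem~\ref{desc en vs simultanea} applies), and the separate, trivial treatment of the degenerate case $s(B)=0$, where $\cV_s=\{0\}$ and Theorem~\ref{desc en vs simultanea} is not invoked.
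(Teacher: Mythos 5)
Your proposal is correct and follows essentially the same route as the paper: the forward direction by direct computation with a joint SVD, and the converse by observing that the equality hypothesis forces $B$ to be a global (hence local) minimizer of $\Psi_{(N,\,A,\,s(B))}$ on $\cV_{s(B)}$ for a strictly convex u.i.n.\ $N$, so that Theorem~\ref{desc en vs simultanea} applies. Your explicit treatment of the degenerate case $s(B)=0$ (excluded by the standing hypothesis $s\neq 0$ of Section~\ref{sec 3.2}) is a small but welcome addition the paper leaves implicit.
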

\begin{proof} In case $A\coma B\in \mat$ have a joint SVD, then it is straightforward to show that $|s(A)-s(B)|\da=s(A-B)$.
Conversely, assume that $|s(A)-s(B)|\da=s(A-B)$ and choose your favorite strictly convex u.i.n. $N$ on $\mat$. By the comments 
at the beginning of this section, we see that $B$ is a global minimizer of $\Psi_{(N\coma A\coma s(B))}=\Psi$ on $\cV_{s(B)}$. In particular, $B$ is a
local minimizer of $\Psi$ in  $\cV_{s(B)}$; the result now follows from Theorem \ref{desc en vs simultanea}.
\end{proof}

\section{Application: Generalized Strawn's conjecture}\label{sec 4}

In this section we consider some problems within the theory of finite frames (see the texts \cite{FinFram, Chr}. 
for general references on this topic). It is worth pointing out that our results can be also be described as 
the solution to certain matrix nearness problems, following the scheme of \cite{High} (see Remark \ref{rem equiv probs nearness}).

\pausa
In what follows we adopt the following: 

\pausa
{\bf Notation and terminology}: let $\cF=\{f_i\}_{i\in\I_k}$ be a finite sequence in $\C^d$. Then,
\ben
\item $T_\cF\in \cM_{d,k}(\C)$ denotes the synthesis operator of $\cF$ given by $T_\cF\cdot(\alpha_i)_{i\in\I_k}=\sum_{i\in\I_k}\alpha_i\, f_i$.
\item $T_\cF^*\in \cM_{k,d}(\C)$ denotes the analysis operator of $\cF$ and it is given by $T_\cF^*\cdot f=(\langle f,f_i\rangle)_{i\in\I_k}$.
\item  $S_\cF\in \matpos$ denotes the frame operator of $\cF$ and it is given by $S_\cF=T_\cF\,T_\cF^*$. Hence, $Sf=\sum_{i\in\I_k} \langle f,f_i\rangle f_i=\sum_{i\in\I_k} f_i\otimes f_i (f)$ for $f\in\C^d$. 
\item We say that $\cF$ is a frame for $\C^d$ if it spans $\C^d$; equivalently, $\cF$ is a frame for $\C^d$ if $S_\cF$ is a positive invertible operator acting on $\C^d$.
\EOE\een

\subsection{Generalized frame operator distances}
Let $S\in\matpos$ and $\asubi$. In this case we consider 
$$
\tcal := \llav{ \cG=\{g_i\}_{i\in\I_k} \in  (\C^d)^{k} \  : 
\norm{g_{i}}^2=a_i  \,\mbox{, for every }   i \in \IN{k}}\,.
$$ By definition, $\tcal$ is the (Cartesian) product of spheres in $\C^d$; 
hence, we consider the product metric of the Euclidean metrics in each of these spheres, namely $$d(\cG\coma \cG')=\max\{\|g_i-g_i'\|:\ i\in\I_k\} \peso{for} 
\cG=\{g_i\}_{i\in\I_k},\, \cG'=\{g_i'\}_{i\in\I_k}\in\tcal\,.$$  Notice that 
$\tcal$ is a compact metric space with the product metric.
Given a strictly convex u.i.n $N$ on $\mat$, we can consider 
the generalized frame operator distance (G-FOD) in $\tcal$ (see \cite{dnp}) given by
$$\Theta_{(N,\, S,\, \ca)}=\Theta: \tcal \rightarrow \R_{\geq0} \peso{given by} \Theta(\cG)=N(S-S_{\cG})\,$$
where $S_\cG=\sum_{i\in\I_k} g_i\otimes g_i$ denotes the frame operator of a family $\cG\in\tcal$. 
This notion is based on the frame operator distance (FOD) $\Theta_{(\|\cdot\|_2,\, S,\, \ca )}$ introduced by Strawn in \cite{strawn}, where
$\|A\|_2^2=\tr(A^*A)$ denotes the Frobenius norm, $A\in\mat$. 

\pausa
Based on his work and on numerical evidence, Strawn conjectured in \cite{strawn} that 
local minimizers of $\Theta_{(\|\cdot\|_2,\, S,\, \ca )}:\tcal\rightarrow \R_{\geq 0}$ are global minimizers. In 
\cite{dnp} we settled Strawn's conjecture in the affirmative; indeed, we obtained the following results
related to the more general G-FOD induced by a strictly convex u.i.n.:
\begin{teo}[See\cite{dnp}]\label{teo dnp} Let $S\in\matpos$ and $\asubi$. Then, there exists $\nu^{\rm op}=
\nu^{\rm op}(S\coma \ca)\in (\R_{\geq 0}^d)\da$ (that can be computed explicitly) such that: 
\begin{enumerate}
\item There exists $\cG^{\rm op}\in \tcal$ such that $\lambda(S_{\cG^{\rm op}})=\nu$. In this case, if $N$ is a u.i.n. in $\mat$ then
\begin{equation}\label{eq nuopop}
\Theta_{(N,\, S,\, \ca)}(\cG^{\rm op})\leq \Theta_{(N,\, S,\, \ca)}(\cG) \quad \text{for} \ \cG\in\tcal\,.
\end{equation}
\item If $N$ is a strictly convex u.i.n. and $\cG_0$ is a global minimizer of $\Theta_{(N,\,S,\, \ca)}$ on $\tcal$ then $\lambda(S_{\cG_0})=\nu^{\rm op}$.
\item If $\cG_0$ is a local minimizer of $\Theta_{(\|\cdot\|_2,\, S, \,  \ca)}$ on $\tcal$ then $\lambda(S_{\cG_0})=\nu^{\rm op}$; hence $\cG_0$ 
is a global minimizer of $\Theta_{(\|\cdot \|_2,\, S,\, \ca )}$. 
\qed
\end{enumerate} 
\end{teo}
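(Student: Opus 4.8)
The plan is to reduce the minimization on the product of spheres $\tcal$ to a finite--dimensional problem over the set of achievable frame--operator spectra, to solve that problem, and then to invoke the Local Lidskii's Theorem \ref{teo LLTApp} for the local statement. First I would note that $\cG\mapsto S_\cG$ is continuous on $\tcal$ and that, by the frame--design theorem applied to the Gram matrix $T_\cG^*\,T_\cG$ (a Schur--Horn type majorization condition; see \cite{BowJas}), its image is $\{G\in\matpos:\ \la(G)\in\Lambda_\ca\}$ for a suitable nonempty compact convex polytope $\Lambda_\ca\subset(\R_{\geq 0}^d)\da$. Hence, for a u.i.n. $N$, minimizing $\Theta_{(N,\,S,\,\ca)}$ on $\tcal$ amounts to minimizing $\mu\mapsto\min\{N(S-G):\ G\in\cO_\mu\}$ over $\mu\in\Lambda_\ca$; and by the argument at the beginning of Section \ref{sec 3.1} (Lidskii's Theorem \ref{mrs284} together with Theorem \ref{teo intro prelims mayo}) we have $\min\{N(S-G):\ G\in\cO_\mu\}=N(D_{(\la(S)-\mu)\da})$, attained exactly when $S$ and $G$ are simultaneously diagonalizable with matching order, as in Theorem \ref{teo LLTApp}.

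The combinatorial core is to construct $\nu^{\rm op}\in\Lambda_\ca$ --- by an explicit ``water--filling'' procedure on the inequalities defining $\Lambda_\ca$ --- and to prove the universal majorization $(\la(S)-\nu^{\rm op})\da\prec(\la(S)-\mu)\da$ for every $\mu\in\Lambda_\ca$. Granting this, Remark \ref{desimayo}(2) gives $|\la(S)-\nu^{\rm op}|\da\prec_w|\la(S)-\mu|\da$, so Theorem \ref{teo intro prelims mayo}(1) yields $N(D_{(\la(S)-\nu^{\rm op})\da})\leq N(D_{(\la(S)-\mu)\da})$ for every u.i.n. $N$; choosing $\cG^{\rm op}\in\tcal$ with $S_{\cG^{\rm op}}$ equal to the simultaneously--diagonalizable element of $\cO_{\nu^{\rm op}}$ built from $S$ (again possible by frame design) proves item 1. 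For item 2, if $\cG_0$ is a global minimizer of $\Theta_{(N,\,S,\,\ca)}$ with $N$ strictly convex and $\mu_0=\la(S_{\cG_0})$, then $N(S-S_{\cG_0})$, $N(D_{(\la(S)-\mu_0)\da})$ and $N(D_{(\la(S)-\nu^{\rm op})\da})$ all coincide; Theorem \ref{teo intro prelims mayo}(2) forces $|\la(S)-\mu_0|\da=|\la(S)-\nu^{\rm op}|\da$, and Remark \ref{desimayo}(3) with the universal majorization gives $(\la(S)-\mu_0)\da=(\la(S)-\nu^{\rm op})\da$. Since $\mu\mapsto\|\la(S)-\mu\|_2^2$ is strictly convex on the convex set $\Lambda_\ca$ it has a unique minimizer, which by the universal majorization is $\nu^{\rm op}$; hence $\|\la(S)-\mu_0\|_2=\|\la(S)-\nu^{\rm op}\|_2$ forces $\mu_0=\nu^{\rm op}$.

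For item 3, let $\cG_0$ be a local minimizer of $\Theta_{(\|\cdot\|_2,\,S,\,\ca)}$, set $G_0=S_{\cG_0}$ and $\mu_0=\la(G_0)$, and split the admissible perturbations of $\cG_0$ in $\tcal$ into those that preserve $\la(S_\cG)$ and those that move it. For the first kind: using that $\cG\mapsto S_\cG$, restricted to $\{\cG\in\tcal:\ S_\cG\in\cO_{\mu_0}\}$, is open onto $\cO_{\mu_0}$ (a frame--theoretic analogue of the openness used in the proof of Lemma \ref{lem equiv de los probs}, see \cite{AS,DF}), $G_0$ is a local minimizer of $\Phi_{(\|\cdot\|_2,\,S,\,\mu_0)}$ on $\cO_{\mu_0}$; since the Frobenius norm is strictly convex, Theorem \ref{teo LLTApp} yields an ONB simultaneously diagonalizing $S$ and $G_0$ with matching order, so $\Theta_{(\|\cdot\|_2,\,S,\,\ca)}(\cG_0)=\|\la(S)-\mu_0\|_2$. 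For the second kind: if $\mu_0\neq\nu^{\rm op}$, strict convexity of $\mu\mapsto\|\la(S)-\mu\|_2^2$ makes $v:=\nu^{\rm op}-\mu_0$ a feasible direction at $\mu_0$ with $\langle v,\,\la(S)-\mu_0\rangle>0$; lifting the short segment $t\mapsto\mu_0+t\,v\in\Lambda_\ca\cap(\R_{\geq 0}^d)\da$ to a smooth curve $\cG(t)$ in $\tcal$ with $\cG(0)=\cG_0$ and $\la(S_{\cG(t)})=\mu_0+t\,v$ that stays near $\cG_0$, and using that $\cG_0$ is aligned (so $\cG(t)$ remains nearly aligned), one obtains $\Theta_{(\|\cdot\|_2,\,S,\,\ca)}(\cG(t))=\|\la(S)-(\mu_0+t\,v)\|_2+o(t)<\Theta_{(\|\cdot\|_2,\,S,\,\ca)}(\cG_0)$ for small $t>0$, contradicting local minimality. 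Thus $\mu_0=\nu^{\rm op}$, and then $\cG_0$ is a global minimizer by item 1.

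I expect the main obstacle to be the lifting step in item 3 (together with the openness statement used there): one must realize a prescribed small variation of $\la(S_\cG)$ --- and, ideally, of $S_\cG$ within $\cO_{\mu_0}$ --- by an honest perturbation of $\cG_0$ inside the product of spheres $\tcal$, with enough regularity to control the $o(t)$ misalignment term. This amounts to a submersion--type property of the maps $\cG\mapsto\la(S_\cG)$ and $\cG\mapsto S_\cG$ near $\cG_0$, whose verification --- carried out in \cite{dnp} through a direct analysis of rotations of the frame vectors within their spheres and the combinatorics of $\Lambda_\ca$ --- is the technical heart of the argument. The degenerate situations (repeated entries of $\ca$, $G_0$ not invertible, $\mu_0$ on the boundary of $\Lambda_\ca$) require extra bookkeeping but follow the same scheme.
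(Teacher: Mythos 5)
This theorem is not proved in the paper at all: it is imported from \cite{dnp} (note the ``See \cite{dnp}'' and the \verb|\qed| placed directly in the statement), and the only methodological remark the paper makes is that \cite{dnp} obtains it by translating the G-FOD problem into a frame-completion problem via the identity $S-S_\cG=\|S\|\,I-(S_{\cF_0}+S_\cG)$ of Eq.~\eqref{eq dnp trans}. So there is no in-paper proof to compare against; your reduction to the polytope $\Lambda_\ca$ of achievable spectra (via Schur--Horn for frames) is an equivalent and reasonable reformulation of that same strategy, and the soft parts of your argument --- item 1 from the universal majorization plus Theorem \ref{teo intro prelims mayo}, item 2 from strict convexity of $\mu\mapsto\|\la(S)-\mu\|_2^2$ on the convex set $\Lambda_\ca$, and the first half of item 3 (passing from local minimality on $\tcal$ to local minimality of $\Phi$ on $\cO_{\mu_0}$ via $U\cdot\cG_0$ and the openness of $U\mapsto US_0U^*$, then invoking Theorem \ref{teo LLTApp}) --- are all correct.

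However, as a proof the proposal has two genuine gaps, and they are exactly the two statements that carry the mathematical content of the theorem. First, you ``grant'' the existence of $\nu^{\rm op}\in\Lambda_\ca$ satisfying the universal majorization $(\la(S)-\nu^{\rm op})\da\prec(\la(S)-\mu)\da$ for every $\mu\in\Lambda_\ca$. The existence of a $\prec$-minimum (not merely a minimal element) over this polytope is not automatic --- majorization is only a preorder --- and its construction by the water-filling procedure, together with the verification that the resulting vector is feasible and universally majorized, is the combinatorial core of \cite{dnp}; asserting it is not proving it. Second, in item 3 the lifting step is precisely Strawn's conjecture in disguise: one must realize the spectral deformation $\mu_0+t\,v$ by an honest curve in the product of spheres $\tcal$ emanating from $\cG_0$, with alignment controlled to first order so that $\Theta(\cG(t))=\|\la(S)-(\mu_0+t v)\|_2+o(t)$. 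The map $\cG\mapsto\la(S_\cG)$ is not a submersion in general (repeated eigenvalues, rank-deficient $S_{\cG_0}$, $\mu_0$ on a face of $\Lambda_\ca$), and the need to handle these degeneracies is why \cite{dnp} only obtains item 3 for the Frobenius norm and why the present paper states Conjecture \ref{conjetura gfod} for general strictly convex u.i.n.'s. You correctly identify both obstacles, but deferring them to the reference means the proposal is an outline of the proof in \cite{dnp}, not a self-contained argument.
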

\pausa
We point out that Theorem \ref{teo dnp} is obtained in terms of a translation of G-FOD problems into frame completion 
problems with prescribed norms. Roughly speaking, given $S\in\matpos$ and $\asubi$ as above, there exists a family $\cF_0=\{f_i\}_{i\in\I_d}\in\C^d$
such that for each $\cG\in\tcal$ the operator difference 
\beq\label{eq dnp trans}S-S_{\cG}=\|S\|\,I-(S_{\cF_0}+S_\cG)\,.\eeq
Notice that if we let $\cF=(\cF_0\coma \cG)\in (\C^d)^{d+k}$ be the finite sequence obtained by juxtaposition of $\cF_0$ and $\cG$ then
$S_{\cF_0}+S_\cG=S_{\cF}$. In \cite{dnp} any such $\cF$ is called a completion of $\cF_0$ by a family $\cG$, with norms prescribed by the sequence $\ca$.
Eq. \eqref{eq dnp trans} can be used to show items 1. and 2. in Theorem \ref{teo dnp} for a u.i.n. $N$. In order to get information about
local minimizers of $\Theta_{(N,\, S, \, \ca)}$ from Eq. \eqref{eq dnp trans} we should assume further that $N$ is the Frobenius norm. 
This obstruction to the general case of item 3. (for a strictly convex u.i.n. $N$) seems to be a limitation of the 
reduction methods from \cite{dnp}. Hence, we state the following:
\begin{conj}\label{conjetura gfod}Given $S\in\matpos$ and $\asubi$ and a strictly convex u.i.n. $N$ in $\mat$, let 
$\Theta_{(N,\, S, \, \ca)}:\tcal\rightarrow \R_{\geq 0}$ be given by $\Theta_{(N,\, S, \, \ca)}(\cG)=N(S-S_\cG)$.
If $G_0$ is a local minimizer $\Theta_{(N,\, S, \, \ca)}$ in $\tcal$ then:
\begin{enumerate}
\item $\la(S-S_{\cG_0}) \prec \la(S-S_{\cG})$, for every $\cG\in\tcal$;
\item In particular, $\cG_0$ is a global minimizer of $\Theta_{(\tilde N,\, S, \, \ca)}$, for every u.i.n. $\tilde N$ on $\mat$.
\EOE
\end{enumerate}
\end{conj}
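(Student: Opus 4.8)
\medskip
\noindent\textbf{A strategy towards the conjecture.}
The plan is to separate the argument into a soft reduction --- using only the eigenvalue Local Lidskii theorem and the geometry of $\tcal$ --- and a harder step that pins down the difference-spectrum $\la(S-S_{\cG_0})$. For the soft reduction, observe that for any fixed $U\in\matud$ the rotated family $U\cdot\cG_0=\{U\,g_i\}_{i\in\I_k}$ again lies in $\tcal$ (norms are preserved) and has frame operator $S_{U\cdot\cG_0}=U\,S_{\cG_0}\,U^*$. As $U$ runs over a neighbourhood of $I$ in $\matud$, the families $U\cdot\cG_0$ run over a neighbourhood of $\cG_0$ in $\tcal$, while (by the openness of the orbit map, exactly as in the proof of Lemma \ref{lem equiv de los probs}) the operators $U\,S_{\cG_0}\,U^*$ run over a neighbourhood of $S_{\cG_0}$ in $\cO_{\la(S_{\cG_0})}$. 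Hence, if $\cG_0$ is a local minimizer of $\Theta=\Theta_{(N,\,S,\,\ca)}$ on $\tcal$, then $S_{\cG_0}$ is a local minimizer of $\Phi_{(N,\,S,\,\la(S_{\cG_0}))}$ on $\cO_{\la(S_{\cG_0})}$, and Theorem \ref{teo LLTApp} applies: there is an ONB of $\C^d$ simultaneously diagonalizing $S$ and $S_{\cG_0}$, with matching non-increasing orderings, $[S,S_{\cG_0}]=0$, and
$$
\la(S-S_{\cG_0})=\bigl(\la(S)-\la(S_{\cG_0})\bigr)\da\,.
$$

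\medskip
It is shown in \cite{dnp} (cf. Theorem \ref{teo dnp}) that the optimal frame $\cG^{\rm op}$, for which $\la(S_{\cG^{\rm op}})=\nu^{\rm op}(S,\ca)$, actually realizes the majorization minimum $\la(S-S_{\cG^{\rm op}})=(\la(S)-\nu^{\rm op})\da\prec\la(S-S_\cG)$ for every $\cG\in\tcal$; in particular $(\la(S)-\nu^{\rm op})\da\prec\la(S-S_{\cG_0})$. Consequently, after the soft reduction the whole Conjecture reduces to the single equality $\la(S-S_{\cG_0})=(\la(S)-\nu^{\rm op})\da$: granting it, item 1 is immediate, and item 2 follows from item 1 because $\la(S-S_{\cG_0})\prec\la(S-S_\cG)$ forces $s(S-S_{\cG_0})\prec_w s(S-S_\cG)$ (Remark \ref{desimayo}) and hence $\tilde N(S-S_{\cG_0})\le\tilde N(S-S_\cG)$ for every u.i.n. $\tilde N$ (Theorem \ref{teo intro prelims mayo}). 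To prove the equality one argues by contradiction: assume $(\la(S)-\nu^{\rm op})\da\prec\la(S-S_{\cG_0})$ with $\la(S-S_{\cG_0})\neq(\la(S)-\nu^{\rm op})\da$. The goal is then to build a continuous path $\cG(t)$ in $\tcal$ with $\cG(0)=\cG_0$ such that, for small $t>0$, $\la(S-S_{\cG(t)})\prec\la(S-S_{\cG_0})$ and $\la(S-S_{\cG(t)})\neq\la(S-S_{\cG_0})$; since $N$ is strictly convex, Theorem \ref{teo intro prelims mayo} would then give $\Theta(\cG(t))<\Theta(\cG_0)$, contradicting local minimality. I see two plausible routes to such a path. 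One is to use a Schur--Horn/Horn-type description of the polytope $\{\la(S_\cG):\cG\in\tcal\}$ of attainable frame-operator spectra, analyze which spectral directions pointing from $\la(S_{\cG_0})$ toward $\nu^{\rm op}$ are realizable by curves in $\tcal$, and then move along a majorization line segment while keeping the eigenvectors of $S_{\cG(t)}$ aligned with those of $S$ --- an argument modelled on the rotation step in the proof of Theorem \ref{teo LLTApp}. The other is to pass, via the reduction of \cite{dnp} recalled in Eq. \eqref{eq dnp trans}, to a frame-completion problem for a fixed $\cF_0$, decompose $\C^d$ along the common reducing subspaces of the commutant $\{S,S_{\cG_0}\}'$, and run an induction on $d$ in the spirit of the proof of Proposition \ref{estruct de min loc de Lidskki para val sing}.

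\medskip
The genuine obstacle --- and the reason this statement is recorded only as a conjecture --- is that the frame-operator map $\cG\mapsto S_\cG$ on $\tcal$ is \emph{not} a submersion onto the polytope of attainable spectra: it has critical configurations, so a priori $\cG_0$ could sit at such a configuration with a sub-optimal (yet $S$-aligned) difference-spectrum from which no first-order perturbation lowers $\Theta$. Excluding this requires second-order control of $\Theta$ along curves in $\tcal$ for a general strictly convex $N$ --- precisely the information that the Frobenius-norm reduction of \cite{dnp} supplies but the general-norm reduction does not. Moreover, the inductive block-decomposition route is delicate here because the vectors $g_i$ need not localize inside the reducing subspaces of $\{S,S_{\cG_0}\}'$; one would first have to show (or impose as an additional hypothesis) that a local minimizer respects this decomposition. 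Establishing that localization, together with the base case of the induction, is where I expect the main difficulty to lie.
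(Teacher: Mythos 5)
This statement is stated in the paper as a \emph{conjecture}: the paper offers no proof of it, only partial results, and your proposal rightly does not claim one either, so there is no gap to fault you for. Your ``soft reduction'' is exactly the paper's first step (Theorem \ref{teo applic1}, item 2): local minimality of $\Theta$ on $\tcal$ plus openness of the orbit map $U\mapsto U\,S_{\cG_0}\,U^*$ shows $S_{\cG_0}$ is a local minimizer of $\Phi_{(N,\,S,\,\la(S_{\cG_0}))}$ on $\cO_{\la(S_{\cG_0})}$, whence Theorem \ref{teo LLTApp} gives $[S,S_{\cG_0}]=0$ and $\la(S-S_{\cG_0})=(\la(S)-\la(S_{\cG_0}))\da$. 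One small imprecision: the families $U\cdot\cG_0$ do \emph{not} run over a full neighbourhood of $\cG_0$ in $\tcal$ (that would be false for dimension reasons); what is needed, and what the paper uses, is only that they stay inside a small neighbourhood of $\cG_0$ while their frame operators cover a neighbourhood of $S_{\cG_0}$ in the orbit. Your reformulation of the remaining difficulty as the single equality $\la(S-S_{\cG_0})=(\la(S)-\nu^{\rm op})\da$, and your diagnosis that the obstruction lies in controlling perturbations of $\Theta$ inside $\tcal$ beyond first order and in showing that a local minimizer localizes along the reducing subspaces of $\{S,S_{\cG_0}\}'$, are consistent with the paper's own discussion of why the reduction of \cite{dnp} only settles the Frobenius case.

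For comparison, the paper's partial results push somewhat further than your outline along the second of your two routes. Applying the local Lidskii theorem to the rank-one orbits $\cO_{a_j e_1}$ with the modified matrices $S_{[j]}=S-\sum_{i\neq j}g_i\otimes g_i$ shows that each $g_j$ is an eigenvector of $S-S_{\cG_0}$ (Theorem \ref{teo applic1}, item 1), which yields the decomposition of $W=R(S_{\cG_0})$ into eigenspaces $W_j$ reducing both $S$ and $S_{\cG_0}$ (Corollary \ref{cor consecuencias1}); a second-order perturbation along an explicit curve $\cG(t)\in\tcal$ then forces the vectors attached to a non-maximal eigenvalue of $(S-S_{\cG_0})|_W$ to be linearly independent (Theorem \ref{gen teo gen 4.6}); and when $W$ is a single eigenspace of $S-S_{\cG_0}$ and $k\geq d$, a comparison with the relaxed problem over $\matpos_t$ identifies $\la(S_{\cG_0})=((\la_i(S)-c_1)^+)_{i\in\I_d}$ and settles the conjecture in that case (Theorem \ref{teo conj caso esp}). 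This is precisely the ``localization'' information you flag as the missing ingredient, established so far only under those extra hypotheses.
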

\pausa
We point out that item 2. in Conjecture \ref{conjetura gfod} is a consequence of item 1. Nevertheless, item 2. is directly related
with the possible applications of the solution of Conjecture \ref{conjetura gfod} for the G-FOD problems. 

\pausa
In what follows, we will describe the first features of local minimizers of $\Theta_{(N,\, S, \, \ca)}:\tcal\rightarrow \R_{\geq 0}$, 
for an arbitrary strictly convex u.i.n. $N$ in $\mat$. 
We will also show that Conjecture \ref{conjetura gfod} holds under some further hypothesis on the spectral structure of local minimizers.

\pausa
We end this section with the following remark, in which we show the connection between G-FOD problems and matrix nearness problems.
\begin{rem}\label{rem equiv probs nearness}
Let $S\in\matpos$ and consider a strictly convex u.i.n. $N$ in $\mat$. Let $\mu_j\in(\R^d)\da$, for $j\in\I_k$, and consider the orbits
$$\mathcal O_{\mu_j}=\{G\in\matsad:\ \lambda(G)=\mu_k\}\ , \ j\in\I_k\,.$$
We can then consider the matrix nearness problem (as described in \cite{High}, see also \cite{LiPoon})
\begin{equation}\label{eq equiv prob aprox} 
\text{argmin}\ \{N(S-H)):\ H\in\mathcal O_{\mu_1}+ \ldots + \mathcal O_{\mu_k} \}\,.
\end{equation}
Let $\asubi$ and consider the particular case: $\mu_j=a_j\, e_1$, for $j\in\I_k$, where $\{e_i\}_{i=1}^d$ denotes the canonical basis of $\C^d$. 
Then $G\in \mathcal O_{\mu_j}$ if and only if $G=g\otimes g$ for some $g\in\C^d$ with $\|g\|^2=a_j$, $j\in\I_k$. Hence, the matrix nearness problem
in Eq. \eqref{eq equiv prob aprox} coincides with the problem of computing global minimizers on $\Theta_{(N\coma S\coma \ca)}$ in $\mathbb T_d(\ca)$. Similarly,
the study of local minimizers of the matrix nearness problem corresponds to the study of local minimizers of $\Theta_{(N\coma S\coma \ca)}$. 
It is worth pointing out that for the Frobenius norm, local minimizers of the matrix nearness problem arise naturally as stability points of
(effective) gradient descent algorithms, as those considered in \cite{LiPoon}. Hence, settling Conjecture \ref{conjetura gfod} in the affirmative would be a 
relevant result from an applied point of view. \EOE

\end{rem}

\subsection{Properties of local minimizers of the G-FOD on $\tcal$}
In this section we consider the following

\begin{nota}\label{notaciones strawn}
Fix $S\in\matpos$,  $\asubi$ and a strictly convex u.i.n. $N$ on $\mat$. We consider
\begin{enumerate}
\item $\Theta_{(N,\, S, \, \ca)}=\Theta: \tcal \rightarrow \R_{\geq0}$ given by $\Theta(\cG)=N(S-S_{\cG})$.
\item A local minimizer $\cG_0=\llav{g_i}_{i\in\I_k}\in\tcal$ of $\Theta_{(N,\, S, \, \ca)}$, with frame operator 
$S_0=S_{\cG_0}$.
\item For $\mu\in(\R^d)\da$, the unitary orbit  $\cO_\mu$  given by 
$$\cO_\mu=\{G\in\matsad:\la(G)=\mu\}=\{U^* D_{\mu}\,U:\ U\in\matud\}\,, $$
with the usual metric, induced by the operator norm; 
\item The function $\Phi=\phisn:\cO_\mu\rightarrow \R_{\geq 0}$ given by 
$ \Phi(G)=N(S-G).$
\end{enumerate}
\end{nota}

\begin{teo}\label{teo applic1}
Consider Notation \ref{notaciones strawn}. Then,
\begin{enumerate}
\item $S-S_0$ and $g_j \otimes g_j$ commute, for $j\in\I_k$. Hence, $g_j$ is an eigenvector of $S-S_0$, for $j\in\I_k$.

\item There exists $\{v_i\}_{i\in\I_d}$ an ONB of $\C^d$ such that 
$$
S=\sum_{i\in\I_d} \la_i(S) \ v_i\otimes v_i
\py  S_0=\sum_{i\in\I_d} \la_i(S_0) \ v_i\otimes v_i  
\,.
$$ 
In particular, we have that $\la(S-S_0)=\big[\la(S)-\la(S_0)\,\big]\da$.
\end{enumerate}
\end{teo}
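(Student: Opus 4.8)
The plan is to reduce Theorem \ref{teo applic1} to the local Lidskii theorem (Theorem \ref{teo LLTApp}) by isolating a single vector $g_j$ and perturbing it while freezing all the others. Fix $j\in\I_k$ and write $S_0 = S_{\cG_0} = (S_0 - g_j\otimes g_j) + g_j\otimes g_j =: R_j + g_j\otimes g_j$, where $R_j = \sum_{i\neq j} g_i\otimes g_i \geq 0$ is fixed. The key observation is that if we vary only the $j$-th vector over its sphere $\{g:\ \|g\|^2 = a_j\}$, then $S_{\cG}$ ranges over $R_j + \{g\otimes g:\ \|g\|^2=a_j\}$, and $g\mapsto g\otimes g$ maps this sphere onto the unitary orbit $\cO_{a_j e_1}^{R(g_j)\text{-type}}$ of rank-one positive matrices of trace $a_j$ — more precisely onto $\cO_\nu$ with $\nu = (a_j,0,\ldots,0)\da$. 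Hence $\cG_0$ being a local minimizer of $\Theta$ on $\tcal$ forces $g_j\otimes g_j$ to be a local minimizer of $G\mapsto N((S - R_j) - G)$ on $\cO_{(a_j,0,\dots,0)}$. Applying Theorem \ref{teo LLTApp} with $S$ replaced by $S-R_j$ and $\mu = (a_j,0,\ldots,0)\da$, we conclude that $S-R_j$ and $g_j\otimes g_j$ commute, i.e. $g_j$ is an eigenvector of $S-R_j$. Since $S - S_0 = (S-R_j) - g_j\otimes g_j$ and $g_j\otimes g_j$ commutes with $S-R_j$, it follows that $S-S_0$ commutes with $g_j\otimes g_j$ as well, and $g_j$ is an eigenvector of $S-S_0$. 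This proves item 1.

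For item 2, I would leverage the commutation relations obtained in item 1 together with the structure of the frame operator. Since $S - S_0$ commutes with $g_j\otimes g_j$ for every $j\in\I_k$, and $S_0 = \sum_{j} g_j\otimes g_j$, we get that $S-S_0$ commutes with $S_0$, hence $S-S_0$ commutes with $S = (S-S_0) + S_0$. Therefore $S$ and $S_0$ commute, so they are simultaneously diagonalizable: there is an ONB $\{v_i\}_{i\in\I_d}$ with $S = \sum_i \lambda_i(S)\, v_i\otimes v_i$ and $S_0 = \sum_i \mu_i\, v_i\otimes v_i$ for some (not yet ordered) $\mu_i$. The remaining point is to show the $\mu_i$ can be matched to $\lambda_i(S)$ in the same order, i.e. that after relabeling $S_0 = \sum_i \lambda_i(S_0)\, v_i\otimes v_i$; this is exactly the statement $\lambda(S-S_0) = [\lambda(S)-\lambda(S_0)]\da$. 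I would argue this by the same permutation/rotation argument used inside the proof of Theorem \ref{teo LLTApp}: if on some joint eigenspace the orders are "crossed" ($\lambda_j(S) > \lambda_{j+1}(S)$ while the corresponding eigenvalues of $S_0$ are in increasing order), one can rotate within the two-dimensional span of $v_j, v_{j+1}$; but here the rotation must be realized \emph{within} $\tcal$, i.e. by perturbing the individual $g_i$'s whose supports lie in that $2$-dimensional subspace, not by a free unitary conjugation of $S_0$.

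That last point is what I expect to be the main obstacle: unlike in Theorem \ref{teo LLTApp}, where $G_0$ ranges over a full unitary orbit $\cO_\mu$, here $S_0$ is constrained to be a frame operator of a family with prescribed norms, so not every curve in $\cO_{\lambda(S_0)}$ through $S_0$ is admissible. The honest way around this is to appeal to Theorem \ref{teo dnp}(2): once we know $S$ and $S_0$ commute (equivalently, once we have enough structure), we want to conclude $\lambda(S_0) = \nu^{\rm op}$. Actually the cleaner route for item 2 may be: having established via item 1 that each $g_j$ lies in an eigenspace of $S-S_0$, decompose $\C^d$ into the eigenspaces of $S-S_0$; each $\cG_0$ restricted to such an eigenspace is a local minimizer of a lower-dimensional G-FOD-type problem with constant operator difference, and an inductive / direct-sum analysis combined with the equality case of Lidskii's inequality (Theorem \ref{mrs284}) pins down the joint ordering. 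The delicate bookkeeping is ensuring that the rotations needed to fix a crossed pair can be performed by moving only finitely many $g_i$'s inside a fixed $2$-dimensional subspace while preserving their norms — a rank-one-orbit version of the rotation $U(t)$ in the proof of Theorem \ref{teo LLTApp} — and checking that this strictly decreases $\Theta$, contradicting local minimality. I would carry out: (i) the reduction of item 1 to Theorem \ref{teo LLTApp} as above; (ii) deduce $[S,S_0]=0$; (iii) run the crossed-pair rotation argument within $\tcal$ to get the matched ordering, invoking strict convexity of $N$ and Theorem \ref{teo intro prelims mayo} for the strict decrease; (iv) conclude $\lambda(S-S_0) = [\lambda(S)-\lambda(S_0)]\da$.
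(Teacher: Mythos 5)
Your treatment of item 1 is essentially the paper's argument: freeze all vectors except $g_j$, observe that $\{g\otimes g:\ \|g\|^2=a_j\}$ is the unitary orbit $\cO_{a_j e_1}$, and apply Theorem \ref{teo LLTApp} to $S_{[j]}=S-\sum_{i\neq j}g_i\otimes g_i$. Your deduction of $[S,S_0]=0$ from item 1 is also correct.

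The gap is in the remainder of item 2. You present the ordering statement $\la(S-S_0)=[\la(S)-\la(S_0)]\da$ as a genuine obstacle, on the grounds that ``not every curve in $\cO_{\la(S_0)}$ through $S_0$ is admissible'' within $\tcal$, and you only sketch a crossed-pair rotation plus an induction whose ``delicate bookkeeping'' you do not carry out; you also float Theorem \ref{teo dnp}(2), which concerns global minimizers and cannot be invoked for a local one. The obstacle you describe does not exist. For any unitary $U$ the family $U\cdot\cG_0=\{Ug_i\}_{i\in\I_k}$ lies in $\tcal$ (unitaries preserve norms), has frame operator $US_0U^*$, and is close to $\cG_0$ in the product metric when $\|I-U\|$ is small. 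Combined with the openness of the map $\matud\ni U\mapsto US_0U^*\in\cO_{\la(S_0)}$ (\cite[Thm. 4.1]{AS}, already used in Lemma \ref{lem equiv de los probs}), this shows that an entire neighborhood of $S_0$ in the orbit $\cO_{\la(S_0)}$ is realized by families in $\tcal$ near $\cG_0$; hence $S_0$ is a local minimizer of $\Phi_{(N,\,S,\,\la(S_0))}$ on the full orbit, and Theorem \ref{teo LLTApp} yields item 2 in one stroke, simultaneous ONB with matched non-increasing orderings included (item 1 is not even needed for this). As written, your proof of the ordering claim is incomplete, and the missing idea is precisely this direct reduction.
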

\begin{proof}
For $j\in\I_k$ define  $$S_{[j]}=S-\sum_{i\neq j} g_i \otimes g_i\,\in\cH(d) \py \mu_{[j]}=a_j e_1\,\in\R^d_{\geq 0}\,.$$
Then, $\cO_{\mu_{[j]}}=\llav{g\otimes g: \norm{g}^2=a_j}$
 and it is straightforward to check that $g_j \otimes g_j$ is a local minimizer of $\Theta_{(N,\, S_{[j]},\, \mu_{[j]})}$ in $\cO_{\mu_{[j]}}$. Thus, 
by Theorem \ref{teo LLTApp},
 $g_j \otimes g_j$ commutes with $S_{[j]}$, for $j\in\I_k$. 
This last fact implies that $S-S_0$ and $g_j \otimes g_j$ commute, for $j\in\I_k$, which proves item 1.

\pausa
Since $\cG_0$ is a local minimizer of $\Theta$ in $\tcal$, there exists $\varepsilon>0$ such that 
\beq\label{eq loc para Phi}
U \in B_{(I\coma \eps)}\igdef  
\{U\in\cU(d): \|I-U\|<\epsilon\}\  \implies  \ \Phi_{(N,\, S,\, \mu)}(U\,S_{0}\,U^*)\geq  \Phi_{(N,\, S,\, \mu)}(S_{0})\, , 
\eeq where we are using Notation \ref{notaciones strawn}, with $\mu=\lambda(S_0)$. Indeed, let $\varepsilon>0$ be such that for 
$\cG'\in \tcal$ with $d(\cG_0,\cG')<\varepsilon$ we have that $\Theta(\cG')\geq \Theta(\cG_0)$. Notice that if $U\in 
B_{(I\coma \eps)}$ then $U\cdot\cG_0=\{U\,g_i\}_{i\in\I_k}\in\tcal$ is such that $d(\cG_0\coma U\cdot\cG_0)<\epsilon$. 
Therefore, $$ 
\Phi_{(N,\, S,\, \mu)}(U\,S_{0}\,U^*)=\Theta(U\cdot\cG_0)
\geq \Theta(\cG_0)=\Phi_{(N,\, S,\, \mu)}(S_{0})\,.
$$ 
Now, the map 	$\pi : \matud \to \cO_\mu$ given by $\pi(U) = U\,(S_{0})\,U^*$ is open 
(see \cite[Thm 4.1]{AS}), so that $\pi (B_{(I\coma \eps)})$ is an open 
neighborhood of $S_{0}$ in $\cO_\mu\,$, and 
$S_{0}$ is a local minimum for the map $\phisn$ on $\cO_{\mu}\,$. 
Item 2 now follows from
Theorem \ref{teo LLTApp} and the fact that $\mu=\la(S_0)\in(\R^d)\da$.
\end{proof}

\begin{cor}\label{cor consecuencias1}
Consider Notation \ref{notaciones strawn}. Let $W=R(S_0)\subset \C^d$; then,
\begin{enumerate}
\item $W$ reduces $S-S_0\in\cH(d)$; hence, $D:=(S-S_0)|_W\in L(W)$ is a selfadjoint operator; 
\item Let $\sigma(D)=\{c_1\coma \ldots\coma c_p\}$ be such that $c_1<c_2<\ldots<c_p$ and let 
$$J_j=\{\ell \in \I_k: \ D\,g_\ell=c_j\, g_\ell\} \peso{for} j\in\I_p\,.$$Then $\I_k$ is the disjoint union of $\{J_j\}_{j\in\I_p}$;
\item If we let $W_j=\text{span}\{g_\ell:\ \ell\in J_j\}$ then $W_j$ reduces both 
$S$ and $S_0$, for $j\in\I_p$. Moreover, $W=\oplus_{j\in\I_p} W_j$.
\end{enumerate}
\end{cor}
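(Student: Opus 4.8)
The plan is to exploit Theorem \ref{teo applic1}, which has already done the heavy lifting: it provides a common eigenbasis $\{v_i\}_{i\in\I_d}$ diagonalizing both $S$ and $S_0$, and it shows each $g_j$ is an eigenvector of $S-S_0$. From the simultaneous diagonalization it is immediate that $S-S_0=\sum_{i\in\I_d}(\la_i(S)-\la_i(S_0))\,v_i\otimes v_i$ commutes with $S_0$, so the range $W=R(S_0)$ (the span of those $v_i$ with $\la_i(S_0)\neq 0$) is invariant under $S-S_0$; since $S-S_0$ is selfadjoint, $W$ reduces it, and $D=(S-S_0)|_W$ is selfadjoint. This is item 1, essentially a one-line consequence of the commutation in Theorem \ref{teo applic1}(1)--(2).

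For item 2, note that $S_0=\sum_{\ell\in\I_k}g_\ell\otimes g_\ell$, so each $g_\ell$ lies in $W=R(S_0)$, and by Theorem \ref{teo applic1}(1) each $g_\ell$ is an eigenvector of $S-S_0$, hence of $D$; thus each $\ell\in\I_k$ belongs to exactly one $J_j$ (the one with $c_j$ the eigenvalue attached to $g_\ell$), which gives the disjoint union. One should be slightly careful about degenerate $g_\ell$: if some $g_\ell=0$ then $\|g_\ell\|^2=a_\ell>0$ forbids it, so all $g_\ell\neq 0$ and the assignment is well defined. I would then record $W_j=\text{span}\{g_\ell:\ \ell\in J_j\}\subseteq\ker(D-c_j)\cap W$, and since $\sum_\ell g_\ell\otimes g_\ell=S_0$ has range exactly $W$, we get $W=\sum_{j\in\I_p}W_j$; the sum is orthogonal because the $W_j$ sit in distinct eigenspaces of the selfadjoint operator $D$, so $W=\oplus_{j\in\I_p}W_j$.

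For the remaining assertion in item 3 — that each $W_j$ reduces both $S$ and $S_0$ — I would argue as follows. The operator $S_0=\sum_{\ell\in\I_k}g_\ell\otimes g_\ell$ maps $W_j$ into $\text{span}\{g_\ell:\ \ell\in\I_k\}=W$; to see $S_0$ preserves $W_j$ itself, decompose $S_0=\sum_{i\in\I_p}S_0^{(i)}$ with $S_0^{(i)}=\sum_{\ell\in J_i}g_\ell\otimes g_\ell$, observe $R(S_0^{(i)})\subseteq W_i$, and note that for $i\neq j$ the subspaces $W_i$ and $W_j$ are orthogonal, while $S_0$ acts on $W$ and commutes with $D$ (both being polynomials in $S-S_0$ and $S_0$ evaluated on the common eigenbasis), so $S_0$ preserves each eigenspace of $D$, in particular $W_j=\ker(D-c_j)\cap W$ if that intersection equals $W_j$ — which it does since $W=\oplus W_j$ and each $W_j\subseteq\ker(D-c_j)$ forces $W_j=\ker(D-c_j)\cap W$ by a dimension count. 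Once $S_0$ reduces $W_j$ and $D=(S-S_0)|_W$ reduces $W_j$ (it acts as the scalar $c_j$ there), we get $S|_W=D+S_0|_W$ reduces $W_j$; and since $S$ also reduces $W=R(S_0)$ (because $S$ and $S_0$ are simultaneously diagonalized, so $S$ commutes with the orthogonal projection onto $W$), it follows that $S$ reduces $W_j$ as a subspace of $\C^d$. The only mildly delicate point, and the one I would be most careful about, is the identification $W_j=\ker(D-c_j)\cap W$: a priori $W_j$ is only contained in the right-hand side, but the orthogonal decomposition $W=\oplus_{j\in\I_p}W_j$ together with $W_j\subseteq\ker(D-c_j)$ and the fact that $D$ on $W$ has precisely the eigenvalues $c_1,\dots,c_p$ pins down the equality, and this is what makes the reduction argument go through cleanly.
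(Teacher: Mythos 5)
Your proof is correct and follows essentially the same route as the paper: Theorem \ref{teo applic1} gives that each $g_\ell$ is an eigenvector of $S-S_0$ and that $S$ and $S_0$ commute, eigenvectors of the selfadjoint $D$ for distinct eigenvalues are orthogonal (so the $W_j$ are mutually orthogonal and sum to $W$), and the rank-one expansion of $S_0$ then shows $S_0 W_j\subseteq W_j$, whence $W_j$ reduces $S_0$, $S-S_0$ and $S$. The detour through the identification $W_j=\ker(D-c_j)\cap W$ is sound but unnecessary: once the $g_u$ with $u\notin J_j$ are orthogonal to $W_j$, the formula $S_0x=\sum_{u\in\I_k}\langle x\coma g_u\rangle\,g_u$ already places $S_0x$ in $W_j$ for every $x\in W_j$, which is exactly how the paper argues.
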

\begin{proof}
Notice that $W=\text{span}\{g_i:\ i\in\I_k\}$; on the other hand, by Theorem \ref{teo applic1}, $g_i$ is an eigenvector of $S-S_0$, for each $i\in\I_k$. 
These two facts show that $W$ is an invariant subspace of $S-S_0$; since $S-S_0$ is selfadjoint, $W$ reduces $S-S_0$. Thus, the restriction
$D=(S-S_0)|_W\in L(W)$ is a well defined selfadjoint operator acting on $W$. The previous remarks also show that 
$\I_k$ is the disjoint union of $\{J_i\}_{i\in\I_p}$.

\pausa
Let $j,\,\ell \in \I_p$ with $j\neq \ell$ and let $r\in J_j$ and $s\in J_\ell$. Then, $g_r\perp g_s$, since these vectors are eigenvectors 
of a selfadjoint operator, corresponding to different eigenvalues. Hence, $W_j\perp W_\ell$ and
$$ S_0 \,g_r=\sum_{u\in J_j} \langle g_r\coma g_u\rangle\ g_u\in W_j \,.$$ Thus, in particular, $W_j$ reduces $S_0$; using that $W_j$ also reduces $S-S_0$ we conclude that 
$W_j$ reduces $S=(S-S_0)+S_0$, for $j\in\I_p$. On the other hand, since $W=\sum_{j\in\I_p}W_j$ then
$W=\oplus_{j\in\I_p} W_j$.
\end{proof}

\begin{teo}\label{gen teo gen 4.6}
Consider Notation \ref{notaciones strawn}. Let $W=R(S_0)$ and let $\sigma((S-S_0)|_W)=\{c_1\coma\ldots\coma c_p\}$ as in Corollary 
\ref{cor consecuencias1}. Let $j\in \I_p$ and assume that there exists $c\in \sigma(S-S_0)$ such that $c_j<c$. Then, the family $\{g_j\}_{j\in J_j}$ is linearly independent.
\end{teo}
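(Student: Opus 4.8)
The plan is to argue by contradiction: assume that the family $\{g_\ell\}_{\ell\in J_j}$ is linearly dependent and produce a perturbation of $\cG_0$ inside $\tcal$ that strictly decreases $\Theta$, contradicting local minimality. First I would restrict attention to the reducing subspace $W_j = \text{span}\{g_\ell:\ \ell\in J_j\}$ provided by Corollary \ref{cor consecuencias1}; on $W_j$ the operator $S-S_0$ acts as the scalar $c_j\,I_{W_j}$, so $S|_{W_j} - S_0|_{W_j} = c_j\, I_{W_j}$, and in particular $S|_{W_j} = S_0|_{W_j} + c_j\, I_{W_j}$, whence $\la(S|_{W_j}) = \la(S_0|_{W_j}) + c_j\,\uno$. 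The hypothesis that there exists $c\in\sigma(S-S_0)$ with $c_j<c$ means that, outside $W_j$ (either on another $W_\ell$ with $\ell\neq j$, or on $\ker S_0$), the operator $S-S_0$ attains a value strictly larger than $c_j$; I would fix a unit vector $h$ with $(S-S_0)h = c\, h$ and $h\perp W_j$. This $h$ is the "room" into which mass can be moved.

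Next I would use linear dependence to rotate the frame vectors. If $\{g_\ell\}_{\ell\in J_j}$ is dependent, then $\dim W_j < \#J_j$, so there is a unit vector $e\in W_j$ orthogonal to... no — more usefully, there is a nontrivial linear combination $\sum_{\ell\in J_j}\alpha_\ell g_\ell = 0$. The key construction is a one–parameter family of isometries of $\C^d$ of the form $U(t) = $ identity on $(W_j\oplus \C h)^\perp$ and, on $W_j\oplus\C h$, a rotation $R(t)$ chosen so that $\{R(t)g_\ell\}_{\ell\in J_j}$ keeps the prescribed norms $\|g_\ell\|^2 = a_\ell$ (automatic, since $R(t)$ is unitary) and tilts each $g_\ell$ slightly toward $h$ in a way compatible with the dependence relation, so that to first order the frame operator $S_{\cG(t)}$ of the perturbed family $\cG(t) = \{g_i\}_{i\notin J_j}\cup\{R(t)g_\ell\}_{\ell\in J_j}$ moves a positive amount of spectral mass from the eigenspace of $S-S_0$ at level $c_j$ toward level $c$. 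Concretely, $S-S_{\cG(t)} = (S-S_0) + (S_0 - S_{\cG(t)})$, and I would compute $\frac{d}{dt}\big|_{t=0} S_{\cG(t)}$ and arrange the rotation so that this derivative is (a positive multiple of) $h\otimes w + w\otimes h$ with $w\in W_j$ in the $c_j$-eigenspace — the linear-dependence relation is exactly what lets one choose such a rotation that is a genuine curve in the product of spheres while producing a net rank-one "transfer" rather than a mere permutation of the $g_\ell$'s. Because $c < c_j$ is false and $c_j < c$, moving mass from level $c_j$ up to level $c$ and examining the resulting $2\times 2$ compression shows $\la(S-S_{\cG(t)})\prec \la(S-S_0)$ with strict inequality for small $t>0$, in the same spirit as the $2\times 2$ majorization computation in the proof of Theorem \ref{teo LLTApp} (monotonicity of $\tr(R(t)^2)$). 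Then Theorem \ref{teo intro prelims mayo} and strict convexity of $N$ give $\Theta(\cG(t)) = N(S-S_{\cG(t)}) < N(S-S_0) = \Theta(\cG_0)$ for small $t>0$, contradicting local minimality.

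The main obstacle I expect is the construction and bookkeeping of the rotation $R(t)$: one must simultaneously (a) stay inside $\tcal$, i.e. preserve each individual norm $\|g_\ell\|^2 = a_\ell$ — which forces the use of a single unitary acting on all the $g_\ell$ at once rather than independent rescalings — (b) exploit the dependence relation $\sum\alpha_\ell g_\ell = 0$ so that the first-order change in $S_{\cG(t)}$ is a nonzero rank-one (or low-rank) selfadjoint perturbation supported on $W_j\oplus\C h$ rather than vanishing, and (c) verify that the resulting curve of operators $S - S_{\cG(t)}$ actually produces a strict majorization, not just $\prec_w$, which requires checking that the perturbation genuinely decreases $\la(S-S_0)$ in the dominance order — this is where the gap $c_j < c$ is essential and where a short $2\times 2$ (or $2\times 2$-block) eigenvalue estimate, modeled on Eq. \eqref{defi R} and the lines following it in the proof of Theorem \ref{teo LLTApp}, will be needed. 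Once that curve is in hand, the conclusion is immediate from the submajorization/strict-convexity machinery already established.
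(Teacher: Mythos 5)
You have the right overall strategy (assume dependence, perturb $\cG_0$ toward a unit eigenvector $h$ of $S-S_0$ at the higher level $c$ with $h\perp W_j$, derive a strict majorization $\la(S-S_{\cG(t)})\prec\la(S-S_0)$, and conclude by strict convexity), and you correctly locate where the hypothesis $c_j<c$ must enter. But the perturbation mechanism you propose fails, and it fails exactly at the point where the linear dependence has to be used. A single unitary $U(t)=\exp(tX)$ applied to all the $g_\ell$, $\ell\in J_j$, replaces the block frame operator $P=\sum_{\ell\in J_j}g_\ell\otimes g_\ell$ by $U(t)\,P\,U(t)^*$; this preserves the norms automatically and therefore makes no use whatsoever of the dependence relation, and its first-order effect on $S-S_{\cG(t)}$ is the off-diagonal commutator $-t[X,P]$, i.e.\ precisely the term $t\,(h\otimes w+w\otimes h)$ you aim to produce. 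An off-diagonal first-order perturbation of $\mathrm{diag}(c,\,c_j I)$ makes the eigenvalues \emph{repel}: in the $2\times 2$ model $M(t)=\bigl(\begin{smallmatrix} c & ts\\ t\bar{s} & c_j\end{smallmatrix}\bigr)$ the trace is constant while $\tr(M(t)^2)=c^2+c_j^2+2t^2|s|^2$ is increasing, so $\la(T(t))\succ\la(T(0))$ strictly, the majorization goes the wrong way, and $N(S-S_{\cG(t)})$ \emph{increases} for small $t$. (This is consistent with Theorem \ref{teo applic1}: rigid rotations of the frame vectors cannot improve a local minimizer.) So the curve you describe does not yield the contradiction.

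The paper's proof avoids this by perturbing each vector individually, $g_\ell(t)=(1-t^2|z_\ell|^2)^{1/2}g_\ell+t\,z_\ell\,a_\ell^{1/2}\,h$, where the $z_\ell$ are the coefficients of a dependence relation $\sum_{\ell}\overline{z_\ell}\,a_\ell^{1/2}\,g_\ell=0$ (norms are preserved because $h\perp g_\ell$, the two being eigenvectors of $S-S_0$ for distinct eigenvalues). The dependence relation is used exactly to annihilate the first-order, off-diagonal part of $S-S_{\cG(t)}$: the cross terms sum to $-2t\,\Preal\bigl(h\otimes\sum_\ell\overline{z_\ell}\,a_\ell^{1/2}g_\ell\bigr)+O(t^3)=O(t^3)$. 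What survives at order $t^2$ is the \emph{block-diagonal} term $t^2\sum_\ell|z_\ell|^2\,(g_\ell\otimes g_\ell-a_\ell\,h\otimes h)$, which lowers the top eigenvalue $c$ and raises eigenvalues at the level $c_j<c$, producing the strict majorization after the $O(t^3)$ remainder is absorbed via Weyl's inequality. Without this first-order cancellation your construction breaks down at its central step, so the proposal as written has a genuine gap.
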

\begin{proof}
Suppose that for some $j\in\I_p$ the family $\{g_i\}_{i\in J_j}$ is linearly dependent.
Hence there exist coefficients $z_l\in \C$, $l\in J_j$ (not all 
zero) such that every $|z_l|\leq 1/2$ and  
\begin{equation}\label{eq1}
\sum_{l\in J_j}\overline{z_l} \ a_l\rai \ g_l=0\ . 
\end{equation}
Let $I_j \inc J_j$ be given by $I_j =\{l\in J_j:\ z_l\neq 0\}$. Assume that there exists 
$c\in\sigma(S-S_0)$ such that $c_j<c$
and let $h\in \C^d$ be such that $\|h\|=1$ and  $(S-S_0)\, h=c \,h$. 
For $t\in (-1/2,1/2)$ let 
$\cG(t)=\{g_i(t)\}_{i\in\IN{k}}$ be given by 
$$
g_l(t) = \begin{cases}  \ (1-t^2\,|z_l|^2)^{1/2} g_l+t\,z_l\,a_l\rai \, h 
& \mbox{if} \ \ l\in I_j\,;  \\
\quad \quad\quad g_l & \mbox{if} \ \ l\in \I_k\setminus I_j  \  .
\end{cases}  
$$ Notice that $\cG(t)\in \tcal $ for $t\in (-1/2,1/2)$.
 Let $\Preal(A)= \frac {A+A^*}{2}$ denote the real part of 
$A \in \mat$. For $l\in I_j$ then 
$$
g_l(t)\otimes g_l(t)
=(1-t^2\,|z_l|^2)\ g_l\otimes g_l+ t^2\,|z_l|^2\,a_l \ h\otimes h 
+ 2\,(1-t^2\,|z_l|^2)^{1/2}\,t \ \Preal(h\otimes \overline{z_l}\, a_l\rai\, g_l)
$$
Notice that $\cG(t)$ is a continuous curve in $\tcal$ such that $\cG(0)=\cG_0$.
Let $S(t)$ denote the frame operator of $\cG(t)\in \tcal $, so that $S(0)=S_0$, and let 
$T(t)=S-S(t)$ for $t\in (-1/2\coma 1/2)$. 
Note that 
$$
T(t)=S-S_0+ t^2 \sum_{l\in I_j }  |z_l|^2 \left(  g_l\otimes g_l - a_l \ h\otimes h \right) + R(t)
$$ 
where $R(t)=- 2 \suml_{l\in I_j }(1-t^2\,|z_l|^2)^{1/2}\,t \ \Preal(h\otimes a_l\rai\,\overline{z_l}\, g_l)$. 
Then $R(t)$ is a smooth function such that 
$$
R(0) = 0 \ \ , \ \ 
R'(0)=- \sum_{l\in I_j } \Preal(h\otimes \overline{z_l}\, a_l\rai\,g_l)
=- \Preal(h\otimes \sum_{l\in I_j } \overline{z_l}\,a_l\rai\, g_l) \stackrel{\eqref{eq1}}{=} 0 \ ,
$$ 
and such that $R''(0)=0$. Therefore 
$\lim\limits_{t\rightarrow 0} \ t^{-2}\ R(t)=0 $.
We now consider 
$$
V=\gen\,\big(\,\{g_l:\ l\in I_j \}\cup \{h\}\,\big)
=\gen\,\big\{\,g_l:\ l\in I_j \,\big\}\stackrel{\perp}{\oplus} \C\cdot h\ .
$$
Then $\dim V=s+1$,  for $s=\dim\gen\{g_l:\ l\in I_j \}\geq 1$. 
By construction, the subspace $V$ reduces $S-S_0$ and $T(t)$ in such a way that $(S-S_0)|_{V^\perp}=T(t)|_{V^\perp}$,
for $t\in(-1/2\coma 1/2)$. On the other hand 
\beq\label{adet}
T(t)|_{V}=(S-S_0)|_V+t^2 \sum_{l\in I_j }  |z_l|^2 \left(  g_l\otimes g_l 
- a_l \ h\otimes h \right) + R(t) = A(t)+R(t)\in L(V)\ ,
\eeq
where we use the fact that the ranges of the selfadjoint operators 
in the second and third term in the formula above clearly lie in $V$. 
Then $\la\big((\,S-S_0)|_V\,\big)=\big(\,c\coma   c_j \, \uno_s\, \big)
\in (\R^{s+1}_{>0})\da $ and 
$$\barr{rl}
\la \Big(\, \sum_{l\in I_j }  |z_l|^2  g_l\otimes g_l\,\Big) &
=(\gamma_1 \coma \ldots \coma \gamma_s \coma 0)
\in (\R^{s+1}_{\geq 0})\da \peso{with} \gamma_s>0 \ ,\earr
$$
where we have used the definition of $s$ and the fact that $|z_l|>0$ for $l\in I_j \,$ (and the known fact that 
if $	S\coma T\in \matpos \implies R(S+T) = R(S)+R(T)\,$). 
Hence, for sufficiently small $t$, 
the spectrum of the operator  $A(t)\in L(V)$
defined in Eq. \eqref{adet} is 
$$\barr{rl}
\la\big(\, A(t)\,\big) 
&=\big(\, c
-t^2 \, \sum_{l\in I_j }a_l\,|z_l|^2 \coma c_j+t^2\,\gamma_1 \coma \ldots \coma c_j+t^2 \,\gamma_s \,\big) \in (\R^{s+1}_{\geq 0})\da \ , \earr	
$$ 
where we have used the fact that $\langle g_l \coma h\rangle=0$ for every $l\in I_j \,$. 
Let us now consider 
$$
\la\big(\, R(t)\,\big)
=\big(\,\delta_1(t) \coma \ldots \coma \delta_{s+1}(t)\, \big) 
\in (\R^{s+1}_{\geq 0})\da\peso{for} t\in \R \ .
$$ 
Recall that in this case $\lim\limits_{t\rightarrow 0}t^{-2} \delta_j(t)=0$ 
for $1\leq j\leq s+1$. Using Weyl's inequality 
on Eq. \eqref{adet},  we now see that 
\beq\label{eq pro nueva 0}
\lambda \big(\,T(t)|_V\,\big)\prec \la\big(\, A(t)\,\big)
+\la\big(\, R(t)\,\big)\igdef \rho(t)\in (\R^{s+1}_{\geq 0})\da\,.\eeq We know that 
$$
\barr{rl}
\rho(t)&= \big(\, c-t^2 \, \sum_{l\in I_j }a_l\,|z_l|^2  +\delta_{1}(t)\coma c_j+t^2\,\gamma_1+\delta_2(t) \coma 
\ldots \coma c_j+t^2 \,\gamma_s+\delta_{s+1}(t) \, \big) 
\\ &\\
&=  
\Big(\, c-t^2 \, (\sum_{l\in I_j }a_l\,|z_l|^2  +\frac{\delta_{1}(t)}{t^2})\coma  
c_j+t^2\,(\gamma_1+\frac{\delta_2(t)}{t^2}) \coma \ldots \coma 
c_j+t^2 \,(\gamma_s+\frac{\delta_{s+1}(t)}{t^2}) \,\Big)  \ .
\earr
$$
Since by hypothesis $c_j<c$ then, the previous remarks show that there exists $\varepsilon>0$ such that 
if $t\in(0,\varepsilon)$ then, for every $i\in \I_s$ 
$$
c>c-t^2 \, (\sum_{l\in I_j }a_l\,|z_l|^2+\frac{\delta_{1}(t)}{t^2})> 
c_j+t^2(\gamma_{i}+\frac{\delta_{i+1}(t)}{t^2}) >c_j\,.$$
The previous facts show that for $t\in(0,\varepsilon)$ then
 $\rho(t) \prec 
\lambda((S-S_0)|_V)=\big(\,c\coma   c_j \, \uno_s\, \big)$ strictly. 
Therefore,
\begin{eqnarray*}
\lambda(T(t))&=&\big(\lambda((S-S_0)|_{V^\perp})\coma T(t)|_V\,\big)\da \stackrel{\eqref{eq pro nueva 0}}{\prec} 
\big(\lambda((S-S_0)|_{V^\perp})\coma \rho(t)\,\big)\\ &\prec& \big(\lambda((S-S_0)|_{V^\perp})\coma \lambda(S-S_0)|_V\,\big)\da=\lambda(S-S_0)\,,
\end{eqnarray*}
where the second majorization relation is strict (i.e. 
$(\lambda((S-S_0)|_{V^\perp})\coma \rho(t))\da\neq (\lambda((S-S_0)|_{V^\perp})\coma \lambda(S-S_0)|_V)\da$). 
Since $N$ is strictly convex,  
for every $t\in(0,\varepsilon)$ we have that  
$$
\Theta(\cG(t))=N(T(t))<N(S-S_0)=\Theta(\cG)\,.
$$
This last fact contradicts 
the assumption that $\cG_0$ is a local minimizer of $\Theta$ in $\tcal $.
\end{proof}

\subsection{Some special cases of Conjecture \ref{conjetura gfod}}

Consider Notation \ref{notaciones strawn} and assume that $k\geq d$; if we let $W=R(S_0)\subset \C^d$ then, 
as shown in Corollary \ref{cor consecuencias1}, $W$ reduces the self-adjoint 
operator $S-S_0\in\matsad$. In this section we show that in case $W$ is an eigenspace of $S-S_0$ then Conjecture \ref{conjetura gfod} holds for $\cG_0$ i.e., 
$\cG_0$ is a global minimizer of $\Theta_{(N\coma S\coma \ca)}$
in $\tcal$. In order to tackle this particular case, we introduce the following

\begin{rem}[A naive model]\label{un modelo naivo}
Fix $S\in\matpos$,  $\asubi$ and a strictly convex u.i.n. $N$. We let $t=\tr(\ca)=\sum_{i\in\I_k}a_i>0$.
 If $\cG\in\tcal$ then it is clear that 
$$ S_\cG\in \matpos \py \tr(S_\cG)=\sum_{i\in \I_k} \|g_i\|^2=\tr(\ca)=t\,.$$
Hence, we consider 
\beq\label{defi N}
\matpos_t=\{ A:\ A\in\matpos\coma \tr(A)=t\} \supset \{S_\cG:\ \cG \in\tcal\}\,,
\eeq endowed with the metric induced by the operator norm.
Moreover, we consider the map 
\begin{equation}\label{eq defi D}
\cD_{(N,\,S,\, t)}=\cD:\matpos_t\rightarrow \R_{\geq 0} \peso{given by} \cD(A)=N(S-A)\,.
\end{equation}
By Eq. \eqref{defi N} we see that 
\beq\label{eq just N}
\min\{\cD(A):\ A\in\matpos_t\} \leq 
\min\{\Theta(\cG):\ \cG\in\tcal\}\,.
\eeq The inequality in Eq. \eqref{eq just N} can be strict. Yet, we will show that under some additional hypothesis equality holds
in Eq. \eqref{eq just N}. Moreover, since $\matpos_t$ is a (larger but) simpler set, we are able to compute those $A\in\matpos_t$ 
that attain the minimum in the left hand side of Eq. \eqref{eq just N} (see Theorem \ref{teo caso co-fea} below); these facts together will allow us to prove Conjecture \ref{conjetura gfod} in some special cases.
\qed
\end{rem}

\begin{teo}\label{teo caso co-fea}
Let $S\in \matpos$, $\la(S)=(\la_i)_{i\in \I_d}\in(\R_{\geq 0}^d)\da$, $t>0$ and let $N$ be a u.i.n. 
Consider $\llav{v_i}_{i\in\I_d}$ an ONB of $\C^d$ such that $S\, v_i=\la_i\ v_i$, for $i\in\I_d$.
Let $c\leq \la_1$ be uniquely determined by 
$\sum_{i\in\I_d} (\la_i-c)^+=t$ and set 
$$ A^{\rm op}= \sum_{i\in\I_d} (\la_i-c)^+ \,v_i\otimes v_i \in\matpos_t\peso{so that} \lambda(S-A^{\rm op})=(\min\{c\coma \lambda_i\})_{i\in\I_d}\in (\R^d)\da\,.$$
Then, $A^{\rm op}$ is a global minimizer of $\cD$, defined as in Eq. \eqref{eq defi D}.
\end{teo}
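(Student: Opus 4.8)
The plan is to reduce the problem to a majorization statement about vectors in $\R^d$, and then invoke Theorem \ref{teo intro prelims mayo} together with the case-of-equality analysis already developed. First I would observe that for any $A\in\matpos_t$ we have $\la(S-A)$ majorized (in the submajorization sense, after taking absolute values) by a vector that depends only on $\la(S)$ and $t$; more precisely, I want to show that among all $A\in\matpos_t$, the vector $s(S-A)=|\la(S-A)|$ is submajorized by $s(S-A^{\rm op})=(\min\{c,\la_i\})_{i\in\I_d}^{\downarrow}$ (note all these entries are nonnegative, since $c\le\la_1$ forces $\min\{c,\la_i\}$ to have sign issues only when $c<0$, which is handled by the $(\la_i-c)^+$ bookkeeping — I should be careful here and treat the two cases $c\ge 0$ and $c<0$ separately, though when $c<0$ one has $A^{\rm op}=0$ is impossible since $\tr=t>0$, so actually $c<\la_1$ always and the relevant range is $c$ possibly negative only if $t>\tr(S)$... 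I would nail this down at the start).

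The core estimate is the following: if $A\in\matpos$ with $\tr(A)=t$, then writing $B=S-A$, one has $\tr(B)=\tr(S)-t$ fixed, and $-A\le B\le S$ in the operator order, so the eigenvalues of $B$ are squeezed: $\la_i(B)\le\la_i(S)$ for all $i$ (by Weyl monotonicity, since $B=S-A\le S$), and also $\la_i(B)\ge\la_i(S)-\la_1(A)\ge\cdots$. The right framework is: the set $\{\la(S-A):A\in\matpos,\ \tr A=t\}$ — I claim its elements are all majorized by $\la(S-A^{\rm op})$. This is essentially a Schur–Horn / Ky Fan type statement. Concretely, I would argue: for any $j$, $\sum_{i=1}^j\la_i(S-A)=\max_{\dim M=j}\tr(P_M(S-A)P_M)\le\max_{\dim M=j}\tr(P_M S P_M)-\min\{\tr(P_M A P_M):\dim M=j\}$; but the clean way is to note $S-A^{\rm op}$ has eigenvalues $(\min\{c,\la_i\})_i$ and to directly verify $\la(S-A)\prec(\min\{c,\la_i\})_i^{\downarrow}$ using the dual description of majorization via $\sum_{i=1}^j\la_i^{\downarrow}$ for the top sums and $\sum$ of smallest $j$ for the bottom, combined with $\tr(S-A)=\tr(S)-t=\sum_i\min\{c,\la_i\}$ (which is exactly how $c$ was chosen). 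For the top-$j$ sums: $\sum_{i=1}^j\la_i(S-A)\le\sum_{i=1}^j\la_i(S)$ always (Weyl, as $A\ge0$), and also $\le\tr(S-A)-(\text{sum of bottom }d-j)$; since bottom eigenvalues of $S-A$ are $\ge$ bottom eigenvalues of $-A\ge-\la_1(A)$... this needs the constraint $\tr A=t$ to bite. Then $s(S-A)=|\la(S-A)|\prec_w|\la(S-A^{\rm op})|$ follows from $\la(S-A)\prec\la(S-A^{\rm op})$ together with item 2 of Remark \ref{desimayo}.

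Once that majorization is in hand, Theorem \ref{teo intro prelims mayo}(1) gives $N(S-A)\ge N(S-A^{\rm op})$ for every u.i.n. $N$ and every $A\in\matpos_t$, which is exactly the assertion that $A^{\rm op}$ is a global minimizer of $\cD$. The statement of the theorem only claims global minimality (it says nothing about uniqueness or strict convexity here), so I would not need the case-of-equality part. I would also double-check the two displayed claims in the theorem statement itself: that $c$ is uniquely determined by $\sum_i(\la_i-c)^+=t$ (the function $c\mapsto\sum_i(\la_i-c)^+$ is continuous, non-increasing, strictly decreasing on the relevant interval, equal to $\tr(S)$ at $c=0$ if all $\la_i\ge0$... wait, it equals $\sum\la_i=\tr(S)$ at $c=0$ and tends to $0$ as $c\to\la_1^+$, actually it is $0$ for $c\ge\la_1$, and it increases without bound as $c\to-\infty$, so for any $t>0$ there is a unique $c\le\la_1$ with the required value — and $c$ may be negative iff $t>\tr(S)$), and that $\lambda(S-A^{\rm op})=(\min\{c,\la_i\})_{i\in\I_d}$, which is immediate from $S-A^{\rm op}=\sum_i(\la_i-(\la_i-c)^+)v_i\otimes v_i$ and $\la_i-(\la_i-c)^+=\min\{\la_i,c\}$.

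The main obstacle I anticipate is the majorization inequality $\la(S-A)\prec\la(S-A^{\rm op})$ itself — specifically, proving the top-$j$ partial-sum inequalities $\sum_{i=1}^j\la_i(S-A)\le\sum_{i=1}^j\min\{c,\la_i\}^{\downarrow}$ uniformly over $A\in\matpos_t$. The right tool is likely a variational/Ky Fan argument: $\sum_{i=1}^j\la_i(S-A)=\max\{\tr((S-A)Q):Q\text{ a rank-}j\text{ orthogonal projection}\}$, and one must show this is at most $\sum_{i=1}^j\min\{c,\la_i\}$. Fix such a $Q$; then $\tr((S-A)Q)=\tr(SQ)-\tr(AQ)\le\tr(SQ)$. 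If $\tr(SQ)\le\sum_{i=1}^j\min\{c,\la_i\}^{\downarrow}$ we are done for this $Q$; otherwise $\tr(SQ)$ is large, meaning $Q$ overlaps heavily with the top eigenspaces of $S$ where $\la_i>c$, forcing $A^{\rm op}$ (hence morally $A$, via the trace constraint) to have substantial mass there — this is where $\tr(A)=t=\sum_i(\la_i-c)^+$ must be exploited to produce a lower bound on $\tr(AQ)$. I would formalize this by comparing against the ``water-filling'' structure of $A^{\rm op}$; alternatively, a slicker route is to prove directly that $A^{\rm op}$ minimizes $N(S-\cdot)$ over $\matpos_t$ for $N$ Frobenius by a convexity/KKT argument, then bootstrap to all u.i.n. via the majorization characterization of Frobenius-optimality — but the self-contained majorization proof above is more in the spirit of the paper. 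Closing this gap cleanly, with the correct handling of the $c<0$ degenerate case, is the crux.
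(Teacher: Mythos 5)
Your core estimate is stated in the wrong direction, and in that direction it is false. You claim $\la(S-A)\prec\la(S-A^{\rm op})=(\min\{c,\la_i\})_{i\in\I_d}$ for all $A\in\matpos_t$, and you propose to prove the top-sum inequalities $\sum_{i=1}^j\la_i(S-A)\le\sum_{i=1}^j\min\{c,\la_i\}$. Take $d=2$, $S=\mathrm{diag}(2,0)$, $t=1$: then $c=1$, $A^{\rm op}=\mathrm{diag}(1,0)$ and $\la(S-A^{\rm op})=(1,0)$, but $A=\mathrm{diag}(0,1)\in\matpos_t$ gives $\la(S-A)=(2,-1)$, whose top entry $2$ is not $\le 1$. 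The correct claim is the reverse one, $\la(S-A^{\rm op})\prec\la(S-A)$: the minimizer has the \emph{most balanced} spectrum, so it is the one that is majorized by everything else. Compounding this, even if your majorization held, Theorem \ref{teo intro prelims mayo}(1) applied to $s(S-A)\prec_w s(S-A^{\rm op})$ yields $N(S-A)\le N(S-A^{\rm op})$, i.e.\ that $A^{\rm op}$ is a global \emph{maximizer}; you wrote the implication with the inequality reversed. These two sign errors cancel in your narrative but leave the argument resting on a false lemma, and the Ky Fan variational scheme you sketch for the "crux" is an attempt to prove that false lemma.

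The route the paper takes avoids the operator-level comparison you struggle with by invoking Lidskii's inequality (Theorem \ref{mrs284}): $(\la(S)-\la(A))^{\downarrow}\prec\la(S-A)$ for every $A\in\matpos_t$. This reduces the whole problem to a purely vectorial one: show that $\gamma:=(\min\{c,\la_i\})_{i}\prec\delta:=(\la(S)-\la(A))^{\downarrow}$ for every $\alpha=\la(A)\in(\R_{\ge0}^d)\da$ with $\tr\alpha=t$. Since $\tr\gamma=\tr\delta$ by the choice of $c$, this is checked via the \emph{bottom} partial sums (using $\delta_i\le\la_i$ and a short computation around the index $r$ where $\la_r\ge c>\la_{r+1}$; the case $c\le\la_d$ is immediate since then $\gamma=c\,\uno_d$ is majorized by anything with the same trace). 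Chaining $\gamma\prec\delta\prec\la(S-A)$, passing to absolute values via Remark \ref{desimayo}(2), and applying Theorem \ref{teo intro prelims mayo}(1) in the correct direction gives $N(S-A^{\rm op})\le N(S-A)$. Your peripheral observations (uniqueness of $c$, the formula $\la_i-(\la_i-c)^+=\min\{\la_i,c\}$, the fact that only item 1 of Theorem \ref{teo intro prelims mayo} is needed) are fine, but the central majorization must be reversed and established through Lidskii rather than through upper bounds on the top Ky Fan sums of $S-A$.
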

\begin{proof}
By construction we see that $\la(S-A^{\rm op})=(\min\{c\coma \la_i\})_{i\in\I_d}$. Let $A\in\matpos_t$ be arbitrary; we consider the following cases:

\pausa
In case $c\leq \la_d$ then we see that $\la(S-A^{\rm op})=c\,\uno_d$. Since $\tr(A)=t$, then
$\tr(\la(S-A))=\tr(S-A)=\tr(S)-t=\tr(\la(S-A^{\rm op}))$. Thus, in this case we have (see item 4. in Remark \ref{desimayo}) that $\la(S-A^{\rm op})=c\, \uno_d\prec \la(S-A)$.
Hence, we conclude that $\cD(A^{\rm op})=N(S-A^{\rm op})\leq N(S-A)=\cD(A)$.

\pausa
In case $c>\la_d$, there exists $r\in\I_{d-1}$ such that $\la_{r}\geq c>\la_r+1$. Then,
$$(\gamma_i)_{i\in\I_d}:=\la(S-A^{\rm op})=(c\,\uno_{r}\coma \la_{r+1} \coma \ldots\coma \la_d )\in(\R^d)\da\,.$$
 If we let $\la(A)=(\alpha_i)_{i\in\I_d}\in (\R^d_{\geq 0})\da$ then, 
by Lidskii's additive inequality, we get that 
\beq\label{ap defi delta}
(\delta_i)_{i\in\I_d}:= ((\la_i-\alpha_i)_{i\in\I_d})\da=(\la(S)-\la(A))\da\prec \la(S-A)\,.
\eeq
We now show that $(\gamma_i)_{i\in\I_d}\prec (\delta_i)_{i\in\I_d}$; 
by construction $\tr((\gamma_i)_{i\in\I_d})=\tr((\delta_i)_{i\in\I_d})$ that is
\begin{equation}\label{trazagam}
\tr(\gamma)=\sum_{j=1}^d \gamma_j= r\, c + \sum_{j=r+1}^d \la_j =
\tr(\delta)=\sum_{j=1}^d \delta_j= \sum_{j=1}^d (\la_j -\al_j)\,.
\end{equation}
 Thus, in order to show that 
$(\gamma_i)_{i\in\I_d}\prec (\delta_i)_{i\in\I_d}$ we need to prove that 
$\sum_{j=k}^d \gamma_j\geq \sum_{j=k}^d \delta_j$, for every $k\in\I_d$, since the vectors are arranged in non-increasing order.
Notice that $\la_i\geq \la_i-\alpha_i$, for every $i\in\I_d$; then, by Remark \ref{desimayo}, we conclude that 
$\la_i\geq \delta_i$, for $i\in\I_d$. This guarantees that, for $r+1\leq k\leq d$,
\begin{equation}\label{ultimos}
\sum_{j=k}^d \gamma_j= \sum_{j=k}^d \la_j\geq  \sum_{j=1}^k \delta_j.
\end{equation}
We now define $\beta=\sum_{j=r+1}^d (\gamma_j - \delta_j)$ and notice that Eq. \eqref{ultimos} shows that $\beta\geq 0$.
By Eq. \eqref{trazagam}, 
$$
\sum_{j=1}^r \delta_j = r\,(c+\beta/r),
$$ which implies that $(c+\beta/r)\uno_{r} \prec (\delta_{1},\cdots,\delta_r)$. Hence, if $1\leq k\leq r$ then
\beq\label{eq desi para lem ap1}
\sum_{j=k}^r \delta_j \leq (r-k+1)\,(c+\beta/r)\leq (r-k+1)\, c + \beta\,.
\eeq
Therefore, for $1\leq k\leq r$, 
\beq\label{eq desi para lem ap2}
\sum_{j=k}^{d} \gamma_j -\sum_{j=k}^{d} \delta_j 
= (r-k+1)\,c + \beta -\sum_{j=k}^r \delta_j 
\stackrel{\eqref{eq desi para lem ap1}}{\geq} 0\,.
\eeq
Then, Eqs. \eqref{trazagam}, \eqref{eq desi para lem ap1} and \eqref{eq desi para lem ap2} show that $\gamma\prec \delta$.
Finally, if $N$ is a (strictly convex) u.i.n. then 
$$N(S-A^{\rm op})=N(D_\gamma)\leq N(D_\delta)\stackrel{\eqref{ap defi delta}}{\leq} N(S-A)$$ so $A^{\rm op}$ is a global minimizer of $\cD$ in 
$\matpos_t$.
\end{proof}

\pausa
The next result verifies Conjecture \ref{conjetura gfod} under some additional assumptions on the spectral structure of
local minimizers. 

\begin{teo}\label{teo conj caso esp}
Let $S\in\matpos$,  $\asubi$, with $k\geq d$, and let $N$ be a strictly convex u.i.n. in $\mat$. Let $\cG_0=\{g_i\}_{i\in\I_d}$ be a local minimizer of 
$\Theta$ in $\tcal$ such that there exists $c_1\in \R$ that satisfies $(S-S_{\cG_0}) g_i=c_1\,g_i$, for $i\in\I_k$. 
Then there exists an ONB $\{v_i\}_{i\in\I_d}$ of $\C^d$ such that 
\beq\label{eq teo rep s y s0}
S=\sum_{i\in\I_d} \la_i\ v_i\otimes v_i \py S_{\cG_0}=\sum_{i\in\I_d} (\la_i-c_1)^+\ v_i\otimes v_i\,, 
\eeq
where $(\la_i)_{i\in\I_d}=\la(S)\in (\R_{\geq 0}^d)\da$. Moreover, $\la(S-S_{\cG_0})\prec \la(S-S_\cG)$ for $\cG\in\tcal$. In particular, $\cG_0$ is a global minimizer of $\Theta$ in $\tcal$.
\end{teo}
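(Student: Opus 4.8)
The plan is to reduce the statement to the ``naive model'' of Remark \ref{un modelo naivo} and then invoke Theorem \ref{teo caso co-fea}. Write $S_0 = S_{\cG_0}$ and $t = \tr(\ca) = \sum_{i\in\I_k} a_i = \tr(S_0) > 0$ (so $S_0 \neq 0$). First I would apply Theorem \ref{teo applic1}: since $\cG_0$ is a local minimizer of $\Theta$, there is an ONB $\{v_i\}_{i\in\I_d}$ simultaneously diagonalizing $S$ and $S_0$ with both spectra listed in non-increasing order, i.e. $S = \sum_{i\in\I_d} \la_i\, v_i\otimes v_i$ and $S_0 = \sum_{i\in\I_d} \la_i(S_0)\, v_i\otimes v_i$ with $(\la_i)_{i\in\I_d} = \la(S) \in (\R^d_{\geq 0})\da$ and $\la(S_0) = (\la_i(S_0))_{i\in\I_d} \in (\R^d_{\geq 0})\da$. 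Set $m = \rk(S_0) \geq 1$; since the $\la_i(S_0)$ are non-increasing, $W := R(S_0) = \gen\{v_1,\ldots,v_m\}$ and $\la_i(S_0) > 0$ exactly for $i \leq m$. The hypothesis on $\cG_0$ says precisely that $(S-S_0)|_W = c_1\, I_W$ (the $g_i$ span $W = R(S_0)$, by Corollary \ref{cor consecuencias1}, and each is a $c_1$-eigenvector of $S-S_0$); reading this off the basis $\{v_i\}$ gives $\la_i = \la_i(S_0) + c_1 > c_1$ for every $i\leq m$.

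The heart of the argument is to prove that $\la_i \leq c_1$ for every $i > m$; equivalently, that $(S-S_0)|_{W^\perp} = S|_{W^\perp}$ (recall $W^\perp = \ker S_0$) has no eigenvalue strictly larger than $c_1$. I would argue by contradiction. If some $\la_i > c_1$ with $i > m$, then $\sigma(S-S_0)$ contains a value $c > c_1$, while $(S-S_0)|_W = c_1 I_W$ means that in the notation of Corollary \ref{cor consecuencias1} we have $p = 1$ and $J_1 = \I_k$. Theorem \ref{gen teo gen 4.6} then forces $\{g_i\}_{i\in\I_k}$ to be linearly independent; but these $k$ vectors span the $m$-dimensional space $W$, and since $m \leq d \leq k$ this is possible only if $k = m = d$, whence $W = \C^d$ and $\sigma(S-S_0) = \{c_1\}$ --- contradicting the existence of $c > c_1$. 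This is the step I expect to be the main obstacle: it is where the extra spectral hypothesis on $\cG_0$, the assumption $k \geq d$, and the linear-independence output of Theorem \ref{gen teo gen 4.6} must be combined carefully.

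With $\la_i \leq c_1$ for $i > m$ in hand we have $\la_i(S_0) = 0 = (\la_i - c_1)^+$ for $i > m$ and $\la_i(S_0) = \la_i - c_1 = (\la_i - c_1)^+$ for $i \leq m$, which is exactly the representation \eqref{eq teo rep s y s0}. Taking traces gives $\sum_{i\in\I_d}(\la_i - c_1)^+ = \tr(S_0) = t$, and since $c_1 < \la_1$ the uniqueness clause in Theorem \ref{teo caso co-fea} (applied with the ONB $\{v_i\}$ of eigenvectors of $S$) identifies $c_1$ with the constant $c$ there and $S_0$ with $A^{\rm op}$. Hence $S_0 = A^{\rm op}$ is a global minimizer of $\cD$ on $\matpos_t$, and moreover the proof of Theorem \ref{teo caso co-fea} in fact establishes the majorization $\la(S - A^{\rm op}) \prec \la(S - A)$ for every $A \in \matpos_t$. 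Since $S_\cG \in \matpos_t$ for every $\cG \in \tcal$ (because $\tr(S_\cG) = \sum_i \|g_i\|^2 = t$), we conclude $\la(S - S_{\cG_0}) \prec \la(S - S_\cG)$ for all $\cG \in \tcal$, and Theorem \ref{teo intro prelims mayo} upgrades this to $\Theta(\cG_0) = N(S-S_0) \leq N(S - S_\cG) = \Theta(\cG)$, so $\cG_0$ is a global minimizer of $\Theta$. The remaining points --- bookkeeping to match the ordered spectra, the identity $R(S_0) = \ker(S_0)^\perp$, and $\tr(S_\cG) = \tr(\ca)$ --- are routine.
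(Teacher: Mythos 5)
Your proposal is correct and follows essentially the same route as the paper: Theorem \ref{teo applic1} for the simultaneous diagonalization, Theorem \ref{gen teo gen 4.6} combined with $k\geq d$ to force $\sigma(S-S_{\cG_0})\leq c_1$ off $R(S_{\cG_0})$, and the naive model with Theorem \ref{teo caso co-fea} for the final majorization. The only difference is organizational (you unify the cases $W=\C^d$ and $W\neq\C^d$ into one argument by contradiction, whereas the paper treats them separately), and your observation that one must use the majorization established inside the proof of Theorem \ref{teo caso co-fea}, not just its statement, matches what the paper implicitly does.
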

\begin{proof} Let $S_0=S_{\cG_0}$. By Theorem \ref{teo applic1} 
there exists an ONB $\{v_i\}_{i\in\I_d}$ of $\C^d$ such that 
\beq\label{eq rep s y s0}
S=\sum_{i\in\I_d} \la_i \ v_i\otimes v_i
\py  S_0=\sum_{i\in\I_d} \la_i(S_0) \ v_i\otimes v_i  
\,.
\eeq In particular, $\la(S-S_0)=(\la(S)-\la(S_0))\da$. Let $W=R(S_0)=\text{span}\{g_i:\ i\in\I_k\}$,
 which reduces $S-S_0$ by Corollary \ref{cor consecuencias1}.  Then, by hypothesis we have that $\sigma((S-S_0)|_W)=\{c_1\}$. We consider the following two cases:

\pausa
Assume that $W=\C^d$. In this case $\sigma(S-S_0)=\{c_1\}$ and therefore $\la(S-S_0)=c_1\,\uno_d$. Thus, $\la_i-\la_i(S_0)=c_1$ which implies that 
$\la_i(S_0)=(\la_i-c_1)^+$, for $i\in\I_d$.  Notice that for every $\cG\in\tcal$ we have that $\tr(S-S_\cG)=\tr(S)-\tr(\ca)$; then we see that 
$c_1\,\uno_d=\tr(\la(S-S_0))=\tr(S-S_\cG)$ which shows (see item 4. in Remark \ref{desimayo}) that $\la(S-S_0)\prec \la(S-S_{\cG})$ for every $\cG\in\tcal$. This last fact implies that 
$\Theta(S_{\cG_0})=N(S-S_0)\leq N(S-S_{\cG})=\Theta(\cG)$, for every $\cG\in\tcal$. Thus, $\cG_0$ is a global minimizer of $\Theta$ in $\tcal$.

\pausa
Assume now that $W\neq \C^d$. Hence, $d>\dim W=\text{span}\{g_i:\ i\in\I_k\}$ which shows that $\cG_0$ is a linearly dependent family, since $k\geq d$.
Then, Theorem \ref{gen teo gen 4.6} implies that $c\leq c_1$ for every $c\in\sigma(S-S_0)$. Let $1\leq r\leq d-1$ be such that $\dim W=r$. 
Hence, $\la(S_0)=(\la_{1}(S_0)\coma \ldots\coma \la_{r}(S_0)\coma 0_{d-r})$ and $W=\text{span}\{v_i: 1\leq i\leq r\}$. 
Therefore, using Eq. \eqref{eq rep s y s0} and the previous facts we conclude that 
$$S-S_0=\sum_{i=1}^r(\la_i - \la_i(S_0))   
\ v_i\otimes v_i + \sum_{i=r+1}^d \la_i\ v_i\otimes v_i  
= c_1\ \sum_{i=1}^r  \ v_i\otimes v_i +  \sum_{i=r+1}^d  \la_i\ v_i\otimes v_i $$
Thus, $\sigma(S-S_0)\ni \la_i\leq c_1$ and $\la_i(S_0)=0$, for $r+1\leq i\leq d$; hence, $\la_i(S_0)=(\la_i(S)-c_1)^+$, for $r+1\leq i\leq d$.
Then, $\la_i(S_0)=(\la_i-c_1)^+$ for $i\in\I_d$ and therefore we obtain the representation of $S_0$ as in Eq. \eqref{eq teo rep s y s0}.

\pausa
Notice that $c_1=\la_1-(\la_1-c_1)^+$, since $W\neq \{0\}$. This shows that $c_1\leq \la_1$. Moreover, if we let $\tr(\ca)=t>0$ then 
$$ \sum_{i\in\I_d}(\la_i-c_1)^+=\tr(S_{\cG_0})=\tr(\ca)=t\,.$$
Using Remark \ref{un modelo naivo} and Theorem \ref{teo caso co-fea} we now see that for every $\cG\in\tcal$ we have that 
$\la(S-S_{\cG_0})\prec \la(S-S_\cG)$; in particular,
$$\Theta(\cG)=N(S-S_\cG)=\cD(S_\cG) \geq \cD(S_0)=N(S-S_0) \peso{since} S_\cG\in\matpos_t\,.$$
Thus, $\cG_0$ is a global minimizer of $\Theta$ in $\tcal$.
\end{proof}

\pausa
{\bf Acknowledgment}. We would like to thank Professor Eduardo Chiumiento for fruitful conversations related to the content of this work.


\end{document}